\numberwithin{equation}{section}
\newcommand{\R}{{\mathbb R}}
\newcommand{\N}{{\mathbb N}}
\newcommand{\C}{{\mathbb C}}
\newcommand{\be}{\begin{eqnarray}}
\newcommand{\ben}{\begin{eqnarray*}}
\newcommand{\en}{\end{eqnarray}}
\newcommand{\enn}{\end{eqnarray*}}
\newcommand{\pa}{\partial}
\newcommand{\ov}{\overline}
\def\CC{{C\nolinebreak[4]\hspace{-.05em}\raisebox{.4ex}{\tiny\bf ++}} }
\newtheorem{theorem}{Theorem}[section]
\newtheorem{lemma}[theorem]{Lemma}
\newtheorem{definition}[theorem]{Definition}
\newtheorem{remark}[theorem]{Remark}
\newtheorem{condition}[theorem]{Condition}
\definecolor{Taor1}{rgb}{0.000,0.000,0.000}
\definecolor{Taor25}{rgb}{1.000,0.000,0.000}
\begin{document}
\renewcommand{\theequation}{\arabic{section}.\arabic{equation}}
\begin{titlepage}
  \title{Multi-patch/multiple-scattering frequency-time hybrid solver for interior and exterior wave equation problems}

\author{
Shuai Pan\thanks{School of Mathematical Sciences, University of Electronic Science and Technology of China, Chengdu, Sichuan 611731, China. Email:{\tt pans@std.uestc.edu.cn}},\;
Gang Bao\thanks{School of Mathematical Sciences, Zhejiang University, Hangzhou 310027, China. Email:{\tt baog@zju.edu.cn}},\;
Tao Yin\thanks{State Key Laboratory of Mathematical Sciences and Institute of Computational Mathematics and Scientific/Engineering Computing, Academy of Mathematics and Systems Science, Chinese Academy of Sciences, Beijing 100190, China. Email:{\tt yintao@lsec.cc.ac.cn}},\;
and Oscar P. Bruno\thanks{Department of Computing \& Mathematical Sciences, California Institute of Technology, 1200 East California Blvd., CA 91125, United States. Email:{\tt obruno@caltech.edu}}}
\end{titlepage}
\maketitle
%
\begin{abstract}
  This paper proposes a new multiple-scattering frequency-time hybrid
  (FTH-MS) integral equation solver for problems of wave scattering by
  obstacles in two dimensional space, including interior problems in
  closed cavities and problems exterior to a set of disconnected open
  or closed scattering obstacles. The multiple-scattering FTH-MS
  method is based on a partition of the domain boundary into a
  user-prescribed set of overlapping open arcs, along with a
  corresponding sequence of multiple-scattering problems that
  effectively decompose the interior problem into a series of open-arc
  wave equation subproblems. The new strategy provides a significant
  extension of the original FTH-MS algorithm originally presented in
  [{\it O.P. Bruno, T. Yin, Math. Comput. 93(346) (2024) 551-587}], in
  that (1)~By allowing for use of an arbitrary of number of component
  arcs, and not just two as in the previous contribution, the new
  approach affords (1a) A significantly increased geometric
  flexibility, as well as, (1b)~The use of partitions for which each
  open arc leads to small numbers of iterations if iterative
  linear-algebra solvers are employed; and, (2)~It facilitates
  parallelization---as the subproblem solutions that are needed at
  each multiple scattering step can be evaluated in an embarrassingly parallel 
  fashion. Utilizing a suitably-implemented Fourier transformation,
  each sub-problem is reduced to a Helmholtz frequency-domain problem
  that is tackled via a uniquely-solvable boundary integral
  equation. Similar FTH-MS methods are also presented for problems
  exterior to a number of bounded obstacles. All of the algorithms
  considered incorporate the previously introduced ``time-windowing
  and recentering'' methodology (that enables both treatment of
  incident signals of long duration and long time simulation), as well
  as a high-frequency Fourier transform algorithm that delivers
  numerically dispersionless, spectrally-accurate time evolution for
  arbitrarily long times.
\end{abstract}
  {\bf Keywords:} Wave equation, multiple scattering, Fourier transform, integral equation

\section{Introduction}
\label{sec:1}
We present efficient multi-patch/multiple-scattering frequency-time
hybrid algorithms (FTH-MS) for the solution of the time-dependent wave
equation in both interior and exterior two-dimensional domains.  The
proposed methods utilize a multiple scattering strategy that
decomposes a given interior time-domain problem into a sequence of
limited-duration time-domain problems of scattering by overlapping
open arcs, each one of which is reduced (by means of the Fourier
transform) to a sequence of Helmholtz frequency-domain problems. As
discussed and illustrated in the previous
contributions~\cite{ABL20,BY23}, the combined use of
frequency-domain boundary-integral solvers and frequency-to-time
high-frequency Fourier transform methods results in algorithms that
compare well with existing approaches---including methods based on use of
volumetric discretizations and temporal time-stepping
strategies~\cite{T00,FP96,XCS13,GSS06,SX21,BMPS21,LSZ21}; as well as
methods based on spatio-temporal Green functions and integral
equations; and methods based on a combination of a time-discretization
and integral equations in the Laplace frequency domain.  Indeed,
(a)~Like other boundary-integral methods, the boundary integral
schemes in the FTH-MS algorithm only require discretization of the lower-dimensional
domain boundary (thus enabling significant efficiency gains over
volumetric methods); and, (b)~The combined FTH-MS use of
boundary-integral frequency domain solutions and fast and effective
high-frequency Fourier transform methods results in essentially
spatio-temporally dispersionless algorithms. In particular, (c)~Unlike
other frequency-time hybrid methods, the FTH-MS algorithm does not
incur accumulating temporal dispersion errors and it avoids a certain
``infinite-tail'' difficulty mentioned below.

The volumetric type approaches, in contrast, generally suffer from
spatial and temporal numerical dispersion errors~\cite{BS97} (a
problem which can be mitigated~\cite{melenk2011wavenumber} if
high-order methods are utilized). Thus, for the types of volumetric
solvers often used in practice, applications to high-frequency and/or
large-time problems typically require the use of fine spatial and temporal
meshes---with associated large requirements of computer memory and
long computing times. Challenges have also been found in the context
of integral equation solvers, including the time-domain boundary
integral equation method (TDBIE) \cite{ADG11,BGH20,D03,SU22,SUZ21},
which proceeds on the basis of the retarded-potential Green's
function; and the ``Convolution Quadrature''
method~\cite{BK14,L94,S16} (CQ) which, utilizing a finite-difference
time discretization followed by a Laplace-like transformation in
discrete time, reduces the original time-domain problem to modified
Helmholtz problems over a range of frequencies that can be solved by
means of appropriate frequency-domain boundary integral equations (BIE). In
particular, the TDBIE method requires integration in challenging
domains given by the intersection of the light cone with the overall
scattering surface, and it has been found to suffer from numerical
instability~\cite{BGH20}. As mentioned in~\cite[Section 2.2.1]{ABL20}, on the other hand, the CQ method inherits
the dispersive character of the finite-difference approximation that
underlies its time discretization, and it gives rise to a challenging
infinite tail leading to unbounded memory requirements and increased
time-per-timestep as times grow.

The recently introduced frequency-time hybrid (FTH)
approach~\cite{ABL20} for exterior problems, in contrast, utilizes the
Fourier transform to reduce the time-domain problem to sequences of
frequency-domain problems for the Helmholtz equation. Relying on a
certain ``windowing and time-recentering'' procedure together with
high-frequency Fourier transformation algorithms, the FTH solver can
provide highly accurate numerical solutions at low computing costs,
with spectral accuracy in time, for problems involving complex
scatterers, for incident fields applied over long periods of time, and
with extremely low dispersion errors.

The FTH method~\cite{ABL20} is not applicable to either interior spatial
domains---in which the waves are perpetually trapped, and for which
the corresponding interior-domain Helmholtz problems are not uniquely
solvable at any frequency $k$ for which $-k^2$ an eigenvalue of the
Laplace operator. The recent contribution~\cite{BY23} provides an
extension of the FTH applicable to interior spatial domains. The
extended method proceeds by exploiting a novel multiple-scattering
technique that re-expresses the full time-evolution as a ``ping-pong'' problem of
multiple scattering between two overlapping patches (open-arcs) that cover the
domain boundary. As a result, the method~\cite{BY23},
which relies on a generalized Huygens-like domain-of-influence
condition related to amount of overlap between the two
patches, only requires solution of certain  {\em open-arc} frequency-domain scattering
problems  which are uniquely solvable at all real
frequencies. However, since it is limited to decomposing the domain boundary into only two patches, this ping-pong solver may  encounter significant difficulties with challenging wave-trapping patches---where iterative frequency-domain solvers often require many iterations.
Additionally,  the present FTH-MS generalization of the ping-pong multiple scattering algorithm~\cite{BY23}  includes a capability to enable the solution of problems of scattering by open
arcs---to which the ping-pong solver~\cite{BY23} is not applicable on account of its endpoint singularity-handling approach. 

Relying on decompositions of the domain boundary as a union of
arbitrary numbers of overlapping arcs, the new FTH-MS method proceeds
by adequately accounting for the multiple scattering among all pairs
of component arcs: it re-expresses an interior or exterior time-domain
problem in terms of a sequence of open-arc wave equation sub-problems,
each of which is solved by means of the FTH solver~\cite{ABL20}. We
emphasize that, unlike the previous method~\cite{BY23}, the new
approach can be applied to advantage to open-arc exterior problems as
well. By utilizing partitions in terms of sufficiently simple and
small open arcs, the new method reduces the overall problem to a
number of relatively small frequency domain problems which can be
treated effectively and with high accuracy by means of iterative or
direct linear-algebra solvers, and which can be parallelized
effectively in an embarrassingly parallel fashion.

In a major further improvement, the present contribution eliminates
the need for the ``smooth extension along normals'' singularity-handling technique
introduced in~\cite{BY23} to bypass singularities induced by the
endpoints of one patch at interior points of neighboring patches;
see~\cite[Fig. 3]{BY23}, the accompanying discussion, and the
introductory paragraph of Section~\ref{FTH-op-arc} below.  As detailed
in Section~\ref{FTH-op-arc}, the simplification is achieved by
leveraging the structure of the endpoint-induced singularities to
design appropriate ``smoothing'' changes of variables. As a result,
the overall approach is significantly streamlined, it is applicable to
open-arc exterior problems in addition to the previously considered
closed-arc configurations, and is well-suited for potential
generalizations to three-dimensional settings.

This paper is organized as follows. Section~\ref{sec:2} reviews the
basic elements of the FTH solver~\cite{ABL20}, introduces the
necessary Huygens-like domain-of-influence condition, and mentions
relevant solution regularity results. Section~\ref{sec:3.1} then
introduces the main multi-patch multiple scattering concept, and
presents the key theoretical result that validates the method:
Theorem~\ref{equivalence-int}.  Relying on these ideas,
Section~\ref{sec:3.2} introduces the multi-patch multiple-scattering
FTH-MS algorithm for interior domains, and Section~\ref{sec:3.3}
extends the method to exterior wave equation problems involving both
closed and open scattering obstacles.  The computational implementation used is described in
Section~\ref{sec:4}, including, in particular, a study in
Section~\ref{FTH-op-arc} which examines the aforementioned end-point
and induced singularities, and which introduces variable
transformations that remove these singularities and restore high-order
accuracy.  Numerical examples are presented in Section~\ref{sec:5}
that illustrate the accuracy and efficiency provided by the proposed
methodology.

\section{Preliminaries}
\label{sec:2}

As a preliminary to the presentation of the multi-patch
multiple-scattering FTH-MS solver for interior and exterior wave
equation problems, this section briefly lays down the wave problems
considered, and it reviews theoretical concepts underlying the
exterior and interior FTH-based methods~\cite{ABL20,BY23}---including
the windowing-and-recentering approach~\cite{ABL20}, the Huygens-like
domain-of-influence properties~\cite{BY23} and the regularity
properties~\cite{BH86,CHLM10} of the underlying solutions to wave
equation problems.

\subsection{Interior and exterior wave equation problems}
Let $D \subset \mathbb{R}^2$ denote an open domain with a smooth
boundary $\Gamma$, call
$D^\mathrm{e} = \mathbb{R}^2 \setminus \overline{D}$ its exterior, and
let $u^i(x,t)$ be a given smooth incident field defined for
$(x,t) \in \Gamma\times\mathbb{R}$ which vanishes for $t \leq 0$. In
this paper we consider the initial and boundary-value problems for the
wave equation in both $E = D$ and $E = {D}^\mathrm{e}$,
subject to Dirichlet boundary conditions $f$ on $\Gamma$,
\begin{eqnarray}
\label{waveeqn0}
\begin{cases}
\pa_t^2u(x,t)-c^2\Delta u(x,t)=0, & (x,t)\in E\times\R_+, \cr
u(x,0)=\pa_t u(x,0)=0, & x\in \textcolor{Taor1}{E}, \cr
u(x,t)= f(x,t),  & (x,t)\in\Gamma\times\R_+,
\end{cases}
\end{eqnarray}
where $\R_+:=\{t\in\R: t>0\}$, $f=-u^i$, and where $c>0$ and $u$
denote the wave-speed and the scattered field, respectively. Since
$u^i(x,t)=0$ for $t\leq 0$, problem (\ref{waveeqn}) can be equivalently
written in the form
\begin{eqnarray}
\label{waveeqn}
\begin{cases}
\pa_t^2u(x,t)-c^2\Delta u(x,t)=0, & (x,t)\in E\times\R, \cr
u(x,t)= f(x,t),  & (x,t)\in\Gamma\times\R,
\end{cases}
\end{eqnarray}
by invoking the causality condition $u(x,t)=0$ for
$x\in\textcolor{Taor1}{E}$ and $t\le0$. The Dirichlet problem of
scattering by an open arc $ \Gamma $, which is also considered in this
paper, can similarly be formulated using equations~\eqref{waveeqn0}
and~\eqref{waveeqn} by defining $E$ as the complement of $\Gamma$:
$E = \Gamma^\mathrm{c}$.

Throughout this paper a number of wave-equation problems will be
considered which, assuming vanishing boundary data for $t\leq 0$, will
be expressed in a form similar to (\ref{waveeqn}) in lieu of the
equivalent formulation---of the form~\eqref{waveeqn0}---in terms of
vanishing initial conditions at $t=0$. Throughout this paper a time
dependent function will be said to be {\em causal} if it vanishes for
$t\leq 0$.

\subsection{FTH solver on the exterior domain $E = D^\mathrm{e}$}
\label{sec:2.1}
The FTH solver~\cite{ABL20} produces solutions $u(x,t)$ for the
exterior domain $D^\mathrm{e}$ as indicated in what follows; as mentioned in
Section~\ref{sec:1}, a different approach must be used for the
interior domain $D$ on account of a lack of unique solvability for
interior Helmholtz problems. For a given time-domain function
$g\in L^2(\R)$, its Fourier transform, denoted by $G$, is given by \be
\label{forwardFT}
G(\omega)=\mathbb{F}(g)(\omega):=\int_\R
g(t)e^{i\omega t}\,dt,\quad \omega\in\R,
\en
whereas the corresponding inverse Fourier transform $\mathbb{F}^{-1}$ of a frequency-domain function $G\in L^2(\R)$ is given by
\be
\label{backwardFT}
g(t)=\mathbb{F}^{-1}(G)(\omega):=\frac{1}{2\pi}\int_\R
G(\omega)e^{-i\omega t}\,d\omega.
\en
Letting $U$ and $F$ denote the Fourier transforms of the solution $u$
and boundary values $f$, respectively, it follows that $U$ satisfies
the Helmholtz problem with wavenumber
$\kappa=\kappa(\omega)=\omega/c\in\R$: \be
\label{Helmholtzpro}
\begin{cases}
\Delta U(x,\omega)+\kappa^2 U(x,\omega)=0, & x\in D^\mathrm{e}, \cr
U(x,\omega)=F(x,\omega),  & x\in\Gamma.
\end{cases}
\en

Calling $\Phi_\omega:=i/4 H_0^{(1)}(\kappa|x-y|)$  the fundamental
solution of the Helmholtz equation in $\R^2$,  letting
$\mathcal{S}_{\Gamma}$ and $\mathcal{D}_{\Gamma}$ denote the
frequency-domain single-layer and double-layer potentials
\begin{eqnarray}
&& {\mathcal S}_{\Gamma}[\Psi](x,\omega)=\int_{\Gamma} \Phi_\omega(x,y) \Psi(y,\omega) ds_y,\quad (x,\omega)\in D^\mathrm{e}\times\R,\quad\mbox{and}\label{SL_rep}\\
&& {\mathcal D}_{\Gamma}[\Psi](x,\omega)=\int_{\Gamma}
\pa_{\nu_y}\Phi_\omega(x,y) \Psi(y,\omega)ds_y,\quad (x,\omega)\in
D^\mathrm{e}\times\R,
\end{eqnarray}
and introducing the corresponding frequency-domain
boundary integral operators
\begin{eqnarray}
&& V_{\Gamma}[\Psi](x,\omega)=\int_{\Gamma} \Phi_\omega(x,y) \Psi(y,\omega) ds_y,\quad (x,\omega)\in{\Gamma}\times\R,\label{SL_bnd_op}\\
&& K_{\Gamma}[\Psi](x,\omega)=\int_{\Gamma}
\pa_{\nu_y}\Phi_\omega(x,y) \Psi(y,\omega)ds_y,\quad
(x,\omega)\in{\Gamma}\times\R,
\end{eqnarray}
the frequency-domain solution $U$ can be expressed in the form
\be
\label{FDsol}
U(x,\omega)=\left({\mathcal D}_{\Gamma} -i\eta {\mathcal
    S}_{\Gamma}\right)[\Psi](x,\omega),\quad (x,\omega)\in
D^\mathrm{e}\times\R, \en where $\Psi$ is an unknown density and where
$\eta$ is a given parameter satisfying $\mathrm{Re}\,\eta\ne 0$. In
the limit as $x\rightarrow {\Gamma}$ the classical
frequency-domain combined field integral equation (CFIE) \be
\label{FDBIE}
\left(\frac{1}{2}I+K_{\Gamma}-i\eta V_{\Gamma} \right)[\Psi](x,\omega)= F(x,\omega),\quad (x,\omega)\in{\Gamma}\times\R
\en
results, which is uniquely solvable for all real wavenumbers $\kappa$.

The FTH wave equation solver~\cite{ABL20} for exterior domains $D^\mathrm{e}$
proceeds via effectively performing the following sequence of operations
\be
\label{process}
f(x,t)\xrightarrow{\mathbb{F}} F(x,\omega)
\xrightarrow{(\ref{FDBIE}),(\ref{FDsol})} U(x,\omega)
\xrightarrow{\mathbb{F}^{-1}} u(x,t).
\en
However, for incident fields $u^i$ and associated boundary data $f$ of
long duration, the Fourier transform $F(x,\omega)$ is generally a
highly oscillatory function of $\omega$, as a result of the rapidly
oscillating  factor $e^{i\omega t}$ that appears in the Fourier transform integrand
for large values of $t$---see e.g. \cite[Fig. 1]{ABL20}. Under such a
scenario a very fine frequency-discretization, requiring
$\mathcal{O}(T)$ frequency points, and, thus, a number
$\mathcal{O}(T)$ of evaluations of the frequency-domain boundary
integral equation solver, is required to obtain the time-domain
solution $U(x,t)$. This makes the overall algorithm unacceptably
expensive for long-time simulations. To overcome these difficulties,
the FTH algorithm~\cite{ABL20} relies on a certain ``windowing and
time-recentering'' procedure  proposed in \cite[Sec. 3.1]{ABL20},
that decomposes a scattering problem involving an incident time signal
of long duration into a sequence of problems with smooth incident
field of a limited duration, all of which can be solved in terms of a
fixed set of solutions of the corresponding frequency-domain problems
for arbitrarily large values of $T$.

For a given final time $T$, the windowing-and-recentering approach is
based on use of a smooth partition of unity (POU)
$\mathcal{P}=\{\sqcap_q(t)\ |\ q\in\mathcal{Q}\},
\mathcal{Q}=\{1,\cdots,Q\}$, where the functions $\sqcap_q$ satisfy
the relation $\sum_{q\in\mathcal{Q}}\sqcap_q(t)\ = 1$ for $t\in[0,T]$
and, for a certain sequence $s_q$ ($q\in\mathcal{Q}$), each
$\sqcap_q(t)$ is a non-negative, smooth windowing function of $t$,
supported in the interval $[s_q-H,s_q+H]$ of duration $2H$. Then
utilizing the POU $\mathcal{P}$, any smooth long-time
signal $g(t)$, $t\in[0,T]$, can be expressed in the form
\begin{equation}\label{eq:windowing}
  g(t)=\sum_{q\in\mathcal{Q}} g_q(t), \quad g_q(t)=g(t)\sqcap_q(t),
\end{equation}
where $g_q$ is compactly supported in $[s_q-H,s_q+H]$. The
corresponding Fourier transform is then given by
\begin{equation}
G(\omega)=\sum_{q\in\mathcal{Q}} G_q(\omega),\quad
G_q(\omega)=\int_{0}^{T} g_q(t)e^{i\omega t}dt= e^{i\omega
  s_q}G_{q,slow}(\omega),
\end{equation}
where, defining by
\be
\label{Fourierk}
\mathbb{F}_{q,slow}(g)(\omega):=\int_{-H}^H
  g(t+s_q)\sqcap_q(t)e^{i\omega t}dt,
\en
the $s_q$-centered slow Fourier-transform operator, we call $G_{q,slow}(\omega)= \mathbb{F}_{q,slow}(g)(\omega)$; note that, as suggested by the notation used, $G_{q,slow}$ is a slowly-oscillatory function of $\omega$.

For $q\in\mathcal{Q}$ we now call
$F_{q,slow}(x,\omega)= \mathbb{F}_{q,slow}(f)(x,\omega)$ the slow
Fourier-transform of the boundary data $f$, and we let $U_{q,slow}$ be the
solution of the Helmholtz equation problem (\ref{Helmholtzpro}) with
boundary data $F_{q,slow}$. The solution $U_{q,slow}$ can be expressed in the form
\be
\label{FDsolk}
U_{q,slow}(x,\omega)=\left({\mathcal D}_{\Gamma} -i\eta {\mathcal
    S}_{\Gamma}\right)[\Psi_{q,slow}](x,\omega),\quad (x,\omega)\in
D^\mathrm{e}\times\R, \en where $\Psi_{q,slow}$ is the
unique solution to the frequency-domain CFIE \be
\label{FDBIEk}
\left(\frac{1}{2}I+K_{\Gamma}-i\eta V_{\Gamma} \right)[\Psi_{q,slow}](x,\omega)= F_{q,slow}(x,\omega),\quad (x,\omega)\in{\Gamma}\times\R.
\en
It then easily follows  that
\be
\label{TDsol}
u(x,t)=\mathbb{F}^{-1}\left(\sum_{q\in\mathcal{Q}} e^{i\omega
s_q}U_{q,slow}\right)(x,t)= \sum_{q\in\mathcal{Q}}\mathbb{F}^{-1} \left(U_{q,slow}\right)(x,t-s_q),\quad (x,t)\in D^\mathrm{e}\times\R.
\en
On the basis of an appropriate high-frequency Fourier transform
algorithm to produce the Fourier transforms on the right-hand side
of~\eqref{TDsol}, see~\cite[Section 4]{ABL20}, the FTH solver proceeds
via the following sequence of operations \be
\label{processnew}
f(x,t)\xrightarrow{\mathbb{F}_{q,slow}} F_{q,slow}(x,\omega) \xrightarrow{(\ref{FDBIEk}),(\ref{FDsolk})} U_{q,slow}(x,\omega) \xrightarrow{(\ref{TDsol})} u(x,t).
\en

\subsection{Huygens-like domain-of-influence}
\label{sec:2.2}

Like the algorithm~\cite{BY23}, the multiple scattering algorithm
proposed in this paper depends in an essential manner on a variant of
the well known Huygens principle---in a form that is applicable to the
problem of scattering by obstacles and open arcs. A Huygens-like
domain-of-influence result, which follows by consideration of the
properties of the retarded Green function, may be stated as follows.
\begin{theorem}\cite[Proposition 3.6.2]{S16}
\label{Huygens1}
Let $u$ be the solution to the wave equation problem (\ref{waveeqn}).
If $f(x,t)=0$ for all $x\in\Gamma$ and $t\le t_0$, then for $x\in\R^2\backslash\Gamma$,
\ben
u(x,t)=0\quad \mbox{for all}\quad t\le t_0+c^{-1}\ \mathrm{dist}(x,\Gamma).
\enn
\end{theorem}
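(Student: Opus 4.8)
The plan is to establish the result by the classical energy method, which directly encodes the finite speed of propagation $c$ of the wave equation; this is a self-contained alternative to the retarded-Green-function route indicated in the statement. Fix a point $x_0\in E$ and a time $T$ with $T\le t_0+c^{-1}d$, where $d:=\mathrm{dist}(x_0,\Gamma)$, and consider the backward characteristic cone with apex $(x_0,T)$, namely $\mathcal{C}=\{(x,t):0\le t\le T,\ |x-x_0|\le c(T-t)\}$. Writing $B_t:=\{x\in E:|x-x_0|\le c(T-t)\}$ for the spatial section of the cone at time $t$, I would introduce the local energy
\[ e(t)=\frac12\int_{B_t}\Bigl(|\partial_t u(x,t)|^2+c^2|\nabla u(x,t)|^2\Bigr)\,dx, \]
and aim to show that $e$ is non-increasing on $[0,T]$ and vanishes at $t=0$.

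Differentiating $e$ and accounting for the inward motion (normal speed $c$) of the spherical part of $\partial B_t$, the equation $\partial_t^2 u=c^2\Delta u$ together with the divergence theorem yields
\[ e'(t)=c^2\int_{\partial B_t}\partial_t u\,\partial_\nu u\,dS-\frac{c}{2}\int_{\partial B_t\cap\{|x-x_0|=c(T-t)\}}\Bigl(|\partial_t u|^2+c^2|\nabla u|^2\Bigr)\,dS, \]
with $\nu$ the outward unit normal. The boundary $\partial B_t$ splits into the light-cone cap $\{|x-x_0|=c(T-t)\}$ and the portion lying on $\Gamma$. The key geometric observation is that any $y\in\Gamma$ belonging to $\mathcal{C}$ at time $t$ satisfies $d\le|y-x_0|\le c(T-t)$, whence $t\le T-c^{-1}d\le t_0$; since $f$, and hence $\partial_t u=\partial_t f$, vanishes on $\Gamma$ for $t\le t_0$, the $\Gamma$-portion of the flux integral vanishes. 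On the light-cone cap $\partial_\nu u=\nabla u\cdot\nu$, so by the Cauchy--Schwarz and arithmetic--geometric-mean inequalities $c^2\,\partial_t u\,\partial_\nu u\le\frac{c}{2}(|\partial_t u|^2+c^2|\nabla u|^2)$ pointwise, and the two cap contributions combine to give $e'(t)\le 0$.

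It remains to handle the base. At $t=0$ the zero initial conditions $u(\cdot,0)=\partial_t u(\cdot,0)=0$ force $e(0)=0$; since $e\ge0$ and $e'\le0$ on $[0,T]$, we conclude $e\equiv0$ there. Hence $\partial_t u$ and $\nabla u$ vanish throughout $\mathcal{C}$. In particular $\partial_t u$ vanishes along the vertical segment $\{x_0\}\times[0,T]\subset\mathcal{C}$, and combined with $u(x_0,0)=0$ this gives $u(x_0,T)=0$. As $x_0\in E$ and $T\le t_0+c^{-1}d$ were arbitrary, the claim follows.

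The main obstacle is twofold. First, the energy identity and the divergence theorem require enough regularity of $u$ up to $\Gamma$ and up to the lateral cone boundary to justify the integration by parts; this is supplied by the smoothness of $f$ and the wave-equation regularity results mentioned in the excerpt, if necessary after a standard mollification/approximation argument. Second, one must control the geometry when the cone meets $\Gamma$ or wraps around the endpoints of an open arc $\Gamma$; the decomposition of $\partial B_t$ and the time bound $t\le t_0$ on $\Gamma\cap\mathcal{C}$ persist in all these cases, since they rely only on the distance inequality $|y-x_0|\ge d$. The retarded-potential alternative would instead represent $u$ through the two-dimensional forward fundamental solution, which is supported in $\{|x|\le ct\}$, and then track the causal support of the induced boundary density; the present energy argument has the advantage of avoiding the need to establish such a representation together with its causality.
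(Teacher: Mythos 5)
Your proof is correct in substance, and it takes a genuinely different route from the paper: the paper does not prove Theorem~\ref{Huygens1} at all, but quotes it from \cite[Proposition 3.6.2]{S16}, where it follows from the causal structure of the retarded Green function---i.e., from representing $u$ by retarded layer potentials whose kernels are supported in the light cone, exactly the ``retarded-potential alternative'' you sketch in your closing paragraph. Your backward-light-cone energy argument is the classical, more elementary alternative: the geometric observation that any $y\in\Gamma$ meeting the cone at time $t$ forces $t\le T-c^{-1}d\le t_0$ (so the boundary flux $\partial_t u\,\partial_\nu u$ vanishes there), combined with the sign of the cap terms and the vanishing base energy, is precisely the right mechanism, and it has the virtue of being self-contained, of requiring no layer-potential machinery, and of applying verbatim to Lipschitz closed curves and to both faces of an open arc (where the Dirichlet data kills the flux on each face separately, irrespective of the jump in $\partial_\nu u$). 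What the paper's cited route buys in exchange is that it operates directly in the distributional framework actually used here---solutions lie in spaces like $H_{\sigma,\alpha}^{p-3}(\R,H_{loc}^1(\R^2\backslash\mathcal{C}))$ defined via the Fourier--Laplace transform, where pointwise-in-time energy identities are not immediately available---so the regularity caveat you flag is the genuine work left in your approach, not a formality. In particular, for an open arc the gradient has $r^{-1/2}$ singularities at the endpoints, so the trace of $|\nabla u|^2$ on a spherical cap passing through an endpoint is not integrable; one must either run the differential inequality for almost every $t$ (the cap meets an endpoint only at isolated times) or mollify the data and pass to the limit, and the weak normal trace $\partial_\nu u\in H^{-1/2}$ must be paired against the smooth boundary datum $\partial_t f=0$ rather than integrated pointwise. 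These are standard repairs, and with them your argument stands as a complete and more elementary proof than the one the paper points to.
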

\begin{remark}\label{Huy_OA}
  A version of Theorem~\ref{Huygens1} can also be established for the
  solution to the wave equation problem
\begin{eqnarray}
\label{waveeqn-arc}
\begin{cases}
\pa_t^2v(x,t)-c^2\Delta v(x,t)=0, & (x,t)\in\R^2\backslash\mathcal{C}\times\R,\cr
v(x,t)= g(x,t),  & (x,t)\in\mathcal{C}\times\R,
\end{cases}
\end{eqnarray}
on the exterior of a bounded Lipschitz open-arc $\mathcal{C}$.
\end{remark}

Theorem~\ref{Huygens1} and its open-arc analog mentioned in
Remark~\ref{Huy_OA} do not provide sharp time estimates, as they
only ensure that the field propagates away from the complete boundary
(with speed equal to the speed of sound), but they do not account for
propagation along the scattering boundary.  In particular, for
incident fields illuminating a subset of the boundary of a scatterer,
these theoretical results do not establish that the field propagates
at the speed of sound along the scattering boundary. To determine the
relation between the delay in the arrival of the solution and
compactly supported boundary data, a generalized Huygens-like
domain-of-influence condition was introduced in~\cite{BY23}, which is
believed to be valid, which was established rigorously for a flat arc
and was demonstrated numerically for curved arcs and closed curves in
that reference, but for which a rigorous mathematical proof in the
case of general arcs or closed curves is unavailable as
yet. In the context of the wave equation problem~(\ref{waveeqn-arc})
outside the open arc $\mathcal{C}$, for example, the generalized
Huygens-like domain-of-influence condition may be stated as follows.
\begin{condition}
\label{huygens-conj}
We say that an open Lipschitz curve $\mathcal{C}$ with endpoints $e_1$ and $e_2$ satisfies the restricted Huygens condition iff for every Lipschitz curve $\mathcal{C}^{\mathrm{inc}}\subseteq\mathcal{C}$ satisfying $\mathrm{dist}(\mathcal{C}^{\mathrm{inc}},\{e_1,e_2\})> 0$, and for every $g$ defined in $\mathcal{C}$ such that \be
\label{DoI_ball_ass}
\{x\in\mathcal{C}\ |\ g(x,t)\ne 0 \}\subseteq \mathcal{C}^{\mathrm{inc}}\quad \mbox{for all}\quad t>0,
\en
the solution $v(x,t)$ to the wave equation problem (\ref{waveeqn-arc}) satisfies
\be
\label{DoI_ball_res1}
\{x\in\R^2 \ |\ v(x,t)\ne 0\} \subseteq \Lambda^s(t)\quad \mbox{for all}\quad t\le c^{-1}\mathrm{dist}(\mathcal{C}^{\mathrm{inc}},\{e_1,e_2\}),
\en
where
\ben
\Lambda^s(t)=\{x\in\R^2 \ |\ \mathrm{dist}(x,\mathcal{C}^{\mathrm{inc}})\leq ct\}.
\enn
\end{condition}

As in \cite{BY23}, we conjecture that Condition~\ref{huygens-conj}
holds for arbitrary open and closed Lipschitz curves $\mathcal{C}$ and
for all $t>0$ (even without the restriction
$t\le c^{-1}\mathrm{dist}(\mathcal{C}^{\mathrm{inc}},\{e_1,e_2\})$).
The proof is left for future work. Condition~\ref{huygens-conj}
expresses a well accepted principle in wave physics, namely, that
solutions of the wave equation propagate at the speed of sound, and
that the wave field vanishes identically before the arrival of a
wavefront. This boundary-propagation character provides a crucial
element in the proposed multiple scattering algorithm and, indeed,
throughout this paper Condition~\ref{huygens-conj} is assumed to be
valid.

\subsection{Solution regularity}
\label{sec:2.3}

The regularity of the solutions of the wave equation may be described
in terms of certain function spaces, as discussed in what follows. For
given $\sigma>0$ and $\alpha,p\in\R$, and for a given Hilbert space
$K$, the spatio-temporal Sobolev spaces $H_{\sigma,\alpha}^p(\R,K)$ of
functions with values in $K$ which vanish for $t\leq \alpha$ are
defined by~\cite{BH86,CHLM10}
\begin{equation}
H_{\sigma,\alpha}^p(\R,K):=\left\{ f\in\mathcal{L}'_{\sigma,\alpha}(K): \int_{-\infty+i\sigma}^{\infty+i\sigma} |s|^{2p}\|\mathcal{L}[f](s)\|_K^2ds<\infty \right\},
\end{equation}
together with the norm
\begin{equation}
\label{norm}
\|f\|_{H_{\sigma,\alpha}^p(\R,K)}:=\left(\int_{-\infty+i\sigma}^{\infty+i\sigma} |s|^{2p}\|\mathcal{L}[f](s)\|_K^2ds \right)^{1/2},
\end{equation}
where $\mathcal{L}[f]$ denotes the Fourier-Laplace transform of $f$ given by
\begin{equation}
\mathcal{L}[f](s):=\int_{-\infty}^\infty f(t)e^{is t}dt,\quad s\in \C_\sigma:=\{\omega\in\C: \mbox{Im}(s)>\sigma>0\},
\end{equation}
and where
$\mathcal{L}'_{\sigma,\alpha}(K):=\{\phi\in\mathcal{D}_\alpha'(K):
e^{-\sigma t}\phi\in \mathcal{S}_\alpha'(K)\}$ is defined in terms of
the sets $\mathcal{D}_\alpha'(K)$ and $\mathcal{S}_\alpha'(K)$ of
$K$-valued distributions and $K$-valued tempered distributions that
vanish for $t\le \alpha$, respectively. We also call \ben
H_{\alpha}^p(\beta,K)=\{f(x,t)|_{t\in(-\infty,\beta]}: f\in
H_{\sigma,\alpha}^p(\R,K)\} \enn the set of all restrictions of
functions $f\in H_{\sigma,\alpha}^p(\R,K)$ to the interval
$-\infty < t\leq \beta$. It can be checked that, as suggested by the notation used, the space $H_{\alpha}^p(\beta,K)$ does not depend on $\sigma$~\cite{BY23}.

Relevant well-posedness results for the problems (\ref{waveeqn}) and
\eqref{waveeqn-arc} are summarized in the following theorem.
\begin{theorem}\cite{CHLM10,YMC22}
\label{welltime}
Let $p\in\R$, $\sigma>0$ and $\alpha\ge 0$ be given.
\begin{itemize}
\item For $f\in H_{\sigma,\alpha}^p(\R,H^{1/2}(\Gamma))$, the wave equation problem (\ref{waveeqn}) admits a unique solution $u\in H_{\sigma,\alpha}^{p-3/2}(\R,H^1(D))$ and $u\in H_{\sigma,\alpha}^{p-3/2}(\R,H_{loc}^1(D^\mathrm{e}))$.
\item For $g\in H_{\sigma,\alpha}^p(\R,H^{1/2}(\mathcal{C}))$, the wave equation problem \eqref{waveeqn-arc} admits a unique solution $v\in H_{\sigma,\alpha}^{p-3}(\R,H_{loc}^1(\R^2\backslash\mathcal{C}))$.
\end{itemize}
\end{theorem}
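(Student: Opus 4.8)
The plan is to pass to the frequency domain via the Fourier--Laplace transform $\mathcal{L}$, solve the resulting $s$-parametrized family of boundary value problems with a priori bounds that are explicit in $s\in\C_\sigma$, and then reassemble $u$ (resp.\ $v$) through the inverse transform while matching the $|s|$-powers of the frequency bound against the weight $|s|^{2p}$ in the norm (\ref{norm}). Applying $\mathcal{L}$ in $t$ to (\ref{waveeqn}), and using $\mathcal{L}[\pa_t^2 u](s)=-s^2\mathcal{L}[u](s)$, the transform $U(\cdot,s):=\mathcal{L}[u](\cdot,s)$ satisfies, for each fixed $s$, the Helmholtz-type problem $\Delta U+(s^2/c^2)U=0$ in $E$ with $U=F:=\mathcal{L}[f]$ on $\Gamma$, and similarly for (\ref{waveeqn-arc}) with $\Gamma$ replaced by $\mathcal{C}$. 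Because $\mathrm{Im}(s)>\sigma>0$, the zeroth-order coefficient $s^2/c^2$ is bounded away from the positive real axis, which places the problem in the coercive (dissipative) regime where it is uniquely solvable and admits energy estimates explicit in $s$; moreover the wavenumber $\kappa=s/c$ has $\mathrm{Im}(\kappa)>0$, forcing exponential spatial decay of the fundamental solution and thereby rendering the exterior and open-arc problems well posed.

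First I would establish, for each $s$, the sharp a priori bound $\|U(\cdot,s)\|_{H^1(D)}\le C|s|^{3/2}\|F(\cdot,s)\|_{H^{1/2}(\Gamma)}$, with the analogous local bound in the exterior. Lifting the Dirichlet datum to $\tilde F$ with $\|\tilde F\|_{H^1(E)}\lesssim\|F\|_{H^{1/2}(\Gamma)}$ and writing $U=\tilde F+U_0$ with $U_0$ of vanishing trace, $U_0$ solves the variational problem associated with $a_s(V,W)=\int_E\nabla V\cdot\overline{\nabla W}-(s^2/c^2)\int_E V\overline{W}$. Testing with the energy multiplier $i\bar s=\overline{(-is)}$---the frequency-domain analogue of testing against $\overline{\pa_t U}$---converts the indefinite zeroth-order term into a sign-definite one,
\[
\mathrm{Re}\big(i\bar s\,a_s(U_0,U_0)\big)=\mathrm{Im}(s)\Big(\|\nabla U_0\|_{L^2(E)}^2+c^{-2}|s|^2\|U_0\|_{L^2(E)}^2\Big),
\]
which is coercive since $\mathrm{Im}(s)>0$. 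Lax--Milgram then yields a unique $U_0$, and uniqueness for the full problem is immediate: $F=0$ forces $U_0=0$, hence $U\equiv 0$ for every $s$, hence $u\equiv0$. The delicate point is not existence but the sharp $s$-bookkeeping: a crude use of the continuity and coercivity constants overestimates the loss, and recovering the optimal exponent $3/2$ requires the Bamberger--Ha Duong refinement, carried out in $s$-weighted energy norms, of both the lifting and the coercivity estimates.

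For the reassembly, integrating the frequency bound over the horizontal contour $\mathrm{Im}(s)=\sigma$ and invoking the Parseval identity underlying (\ref{norm}) gives
\[
\|u\|_{H^{p-3/2}_{\sigma,\alpha}(\R,H^1(D))}^2=\int_{-\infty+i\sigma}^{\infty+i\sigma}|s|^{2(p-3/2)}\|U(\cdot,s)\|_{H^1(D)}^2\,ds\le C\int_{-\infty+i\sigma}^{\infty+i\sigma}|s|^{2p}\|F(\cdot,s)\|_{H^{1/2}(\Gamma)}^2\,ds,
\]
the last integral being exactly $C\|f\|_{H^p_{\sigma,\alpha}(\R,H^{1/2}(\Gamma))}^2$, since the factor $|s|^{3}$ from the frequency bound is absorbed precisely by the $3/2$ shift of the regularity index. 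Finally, causality---the vanishing of $u$ for $t\le\alpha$, i.e.\ membership in $\mathcal{L}'_{\sigma,\alpha}$---follows from the analyticity of $s\mapsto U(\cdot,s)$ on $\C_\sigma$ together with the polynomial-in-$|s|$ bound, via the vector-valued Paley--Wiener theorem.

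The main obstacle is the open-arc statement, where the loss is $3$ rather than $3/2$. Here the trace lives on the lower-dimensional open arc $\mathcal{C}$, so the lifting argument is unavailable and one instead represents $v$ through a single-layer potential on $\mathcal{C}$, reducing the problem to the first-kind boundary integral equation $V_{\mathcal C}(s)\psi=G$ with $\psi\in\tilde H^{-1/2}(\mathcal{C})$ exhibiting the characteristic inverse-square-root endpoint singularities. The technical heart is therefore to derive $s$-explicit coercivity and continuity constants for the open-arc operator $V_{\mathcal C}(s)$---whose behavior near the endpoints $e_1,e_2$ is markedly worse than in the closed case---sharp enough to yield the frequency bound $\|v(\cdot,s)\|_{H^1_{loc}}\le C|s|^{3}\|G(\cdot,s)\|_{H^{1/2}(\mathcal{C})}$ that produces the stated $p-3$ index shift. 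Once this frequency-domain estimate is in hand, the contour-integration and Paley--Wiener steps proceed exactly as in the closed case.
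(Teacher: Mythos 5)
This theorem is not proved in the paper at all---it is quoted directly from the references \cite{CHLM10,YMC22}---and your Laplace-domain strategy (transforming to a family of complex-wavenumber Helmholtz problems, obtaining $s$-explicit coercive estimates via the $i\bar{s}$ multiplier, reassembling through Plancherel and the vector-valued Paley--Wiener theorem, and handling the open arc through the first-kind single-layer equation in $\widetilde{H}^{-1/2}(\mathcal{C})$) is precisely the Bamberger--Ha~Duong approach used in those cited works. Your outline is correct, including its honest identification of the real technical content: the sharp $|s|^{3/2}$ bound for the closed-curve case (via $s$-weighted liftings) and the $|s|^{3}$ bound for the open-arc case, which account exactly for the regularity shifts $p-3/2$ and $p-3$ in the statement.
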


\section{FTH-MS solver}
\label{sec:3}

Reference~\cite{BY23} proposed a novel ``ping-pong'' multiple
scattering algorithm for the solution of time-domain wave scattering
problems~\eqref{waveeqn} within a closed curve $\Gamma$. The ping-pong algorithm
proceeds by re-expressing the full time-evolution of the solution $u$
of a boundary-value problem for the wave equation as a problem of
multiple scattering between two overlapping subsets (open-arc patches)
$\Gamma_1$ and $\Gamma_2$ of the domain boundary $\Gamma$. By
appropriately accounting for multiple scattering between $\Gamma_1$
and $\Gamma_2$ it was shown in~\cite{BY23} that solutions to such
open-arc subproblems can be combined to produce a full solution for
the given interior domain problem for arbitrarily long time. The
present contribution proposes a generalized version of the previous
multiple scattering algorithm. In the new setting the full
time-evolution of the solution is obtained as the result of a set of
multiple scattering processes among arbitrary (user-prescribed)
numbers $N$ of overlapping arcs $\Gamma_j$ (called patches in what
follows) whose union equals the domain boundary:
$\Gamma =\cup_{j=1}^N \Gamma_j$. Unlike the method~\cite{BY23}, the
new approach utilizes a POU
$\{\chi_j\ :\ j=1,\dots,N\}$ of the domain boundary which is
subordinated to the covering of the boundary by the patches
$\Gamma_j$. (Without loss of generality, this setup assumes a POU in
which $\chi_j$ is the only POU element that vanishes outside
$\Gamma_j$.)  While POUs are not used in the algorithm~\cite{BY23},
the use of a POU is a key element in the method presented here---which
makes it possible to take advantage of the Huygens
Condition~\ref{huygens-conj} without relying on naturally occurring
vanishing boundary values near arc endpoints---a requirement which can
easily be ensured in the case of two-arc decompositions, but for which
a reasonable generalization to decompositions containing $N>2$ patches
which would additionally be applicable to 2D boundaries in 3D space
has not as yet been found. The ability to utilize arbitrary number of
patches provides a number of significant benefits, as it enables
(i)~The development of embarrassingly parallel implementations;
(ii)~Desirable large reductions in the frequency-domain patch-wise
problem sizes required; and, if iterative linear algebra solvers are
used for the solution of patch-wise frequency-domain problems,
(iii)~Significant reductions in the associated number of
iterations---as adequately multi-patch decompositions may be designed
to avoid wave-trapping patch-wise problems whose solution would
typically requires large numbers of iterations.

The basic principles of the new multiple-scattering methods are
presented in Section~\ref{sec:3.1} in the context of wave problems in
an interior domain $D$. Algorithmic details, once again in the
interior-domain context, are presented in Section~\ref{sec:3.2}, and
the theoretical and algorithmic modifications necessary for the
treatment of exterior problems are then presented in
Section~\ref{sec:3.3}.

%

\subsection{Multiple-scattering algorithm in the interior domain $D$}
\label{sec:3.1}

The proposed multiple scattering algorithm for the wave equation
problem (\ref{waveeqn0}) relies on a decomposition of the boundary
$\Gamma$ into a number of overlapping patches. Thus, as
illustrated in Figure~\ref{MSmodel-int}, the scattering surface
$\Gamma$ is covered by a number $N\ge 2$ of overlapping patches
$\Gamma_j\subset\Gamma$, $j=1,\cdots,N$, in such a way that
$\Gamma=\cup_{j=1}^N \Gamma_j$ and $\Gamma_j$ has only a non-empty
overlap with $\Gamma_{j-1}$ and $\Gamma_{j+1}$ (where, for notational
convenience, we define $\Gamma_{N+1}=\Gamma_1$ and
$\Gamma_{0}=\Gamma_N$). In what follows we call
$\Gamma_{j,j+1}^{\mathrm{ov}}=\Gamma_j\cap\Gamma_{j+1}$ the overlap of
$\Gamma_j$ and $\Gamma_{j+1}$, and we call
$\Gamma_j^{\mathrm{tov}}:=\Gamma_j\backslash
(\Gamma_{j-1,j}^{\mathrm{ov}}\cup \Gamma_{j,j+1}^{\mathrm{ov}})$,
$j=1,\cdots,N$, the portion of $\Gamma_j$ that remains once the regions
of overlap with other patches are ``truncated'', see
Figure~\ref{MSmodel-patchwindow}(a).
\begin{figure}[htb]
\centering
\includegraphics[scale=0.3]{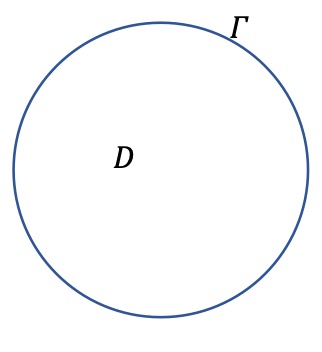} 
\includegraphics[scale=0.3]{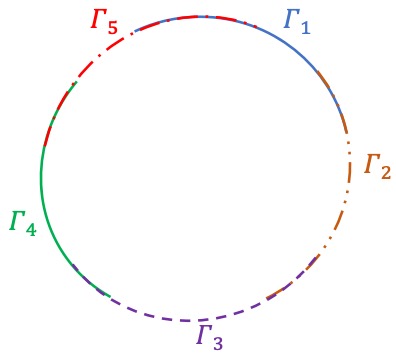} 
\caption{Illustration of the overlapping-patch decomposition of $\Gamma$ into a number $N=5$ of overlapping patches.}
\label{MSmodel-int}
\end{figure}

For each patch $\Gamma_j$ we introduce a smooth windowing function
$\chi_j:\Gamma\to [0,1]$ given by
\begin{equation}
  \label{eq:POU1}
\chi_j(x)=\begin{cases} 1, &
  x\in\Gamma_j^{\mathrm{tov}}, \cr
  \eta(A_{j,j+1}^{\mathrm{ov}}-d_{j,j+1}^{\mathrm{ov}}(x);c_0A_{j,j+1}^{\mathrm{ov}},c_1A_{j,j+1}^{\mathrm{ov}}),
  & x\in\Gamma_{j,j+1}^{\mathrm{ov}}, \cr
  1-\eta(A_{j-1,j}^{\mathrm{ov}}-d_{j-1,j}^{\mathrm{ov}}(x);c_0A_{j-1,j}^{\mathrm{ov}},c_1A_{j-1,j}^{\mathrm{ov}}),
  & x\in\Gamma_{j-1,j}^{\mathrm{ov}},\cr 0, & x\notin\Gamma_j,
\end{cases} \quad j=1,\cdots,N,
\end{equation}
where $0<c_0<1/2<c_1<1$;  where $\eta:\mathbb{R}\to [0,1]$ is a
smooth function equal to $1$ for $|t|<t_0$ and equal to $0$ for
$|t|>t_1$ ($t_1>t_0$), such as the one given in~\eqref{eta} below; and
where, for $j=1,\cdots,N$,
$A_{j,j+1}^{\mathrm{ov}}:={\rm diam}(\Gamma_{j,j+1}^{\mathrm{ov}})$
denotes the diameter of $\Gamma_{j,j+1}^{\mathrm{ov}}$ and
$d_{j,j+1}^{\mathrm{ov}}(x)$ denotes the diameter of the arc that
connects the point $x\in \Gamma_{j}$ and the endpoint $x_{j,j+1}$ of
$\Gamma_j$ that is located in the overlap region
$\Gamma_{j,j+1}^{\mathrm{ov}}$, see
Figure~\ref{MSmodel-patchwindow}(b).  Clearly
$\{\chi_j, j=1,\cdots,N\}$ satisfies the POU condition
\begin{equation}
  \label{eq:POU2}
  \sum_{j=1}^N\chi_j(x)=1\quad\mbox{for}\quad x\in\Gamma.
\end{equation}
The $C^\infty$ function
\be
\label{eta}
\eta(t;t_0,t_1)=\begin{cases}
1, & |t|\le t_0, \cr
e^{\frac{2e^{-1/s}}{s-1}}, & t_0<|t|<t_1, s=\frac{|t|-t_0}{t_1-t_0}, \cr
0, & |t|\ge t_1;
\end{cases}
\en
was utilized to produce all of the numerical results presented in
this paper---although, of course, other smooth window functions could
be used instead.

We additionally let
$\Gamma_j^0=\{x\in\Gamma_j| \chi_j(x)=0\}$ and we call
$\Gamma_{j}^{\mathrm{tr}}=\Gamma_j\backslash\Gamma_j^0$ the
corresponding truncated portion of $\Gamma_j$. The choice of the
parameters $0<c_0<1/2<c_1<1$ ensures that \ben
\mbox{dist}\{\Gamma_{j}^{\mathrm{tr}},\Gamma\backslash\Gamma_j\}>0,\quad j=1,\cdots,N; \enn the values $c_0=1/3$
and $c_1=2/3$ are used throughout this paper. Finally we define the
crucial parameter
\begin{equation}\label{dist}
\delta_{\mathrm{min}}=\min_{j=1,\cdots,N}\left\{\mbox{dist}\{\Gamma_{j}^{\mathrm{tr}},\Gamma\backslash\Gamma_j\}\right\} > 0.
\end{equation}

\begin{figure}[htb]
\centering
\begin{tabular}{cc}
\includegraphics[scale=0.3]{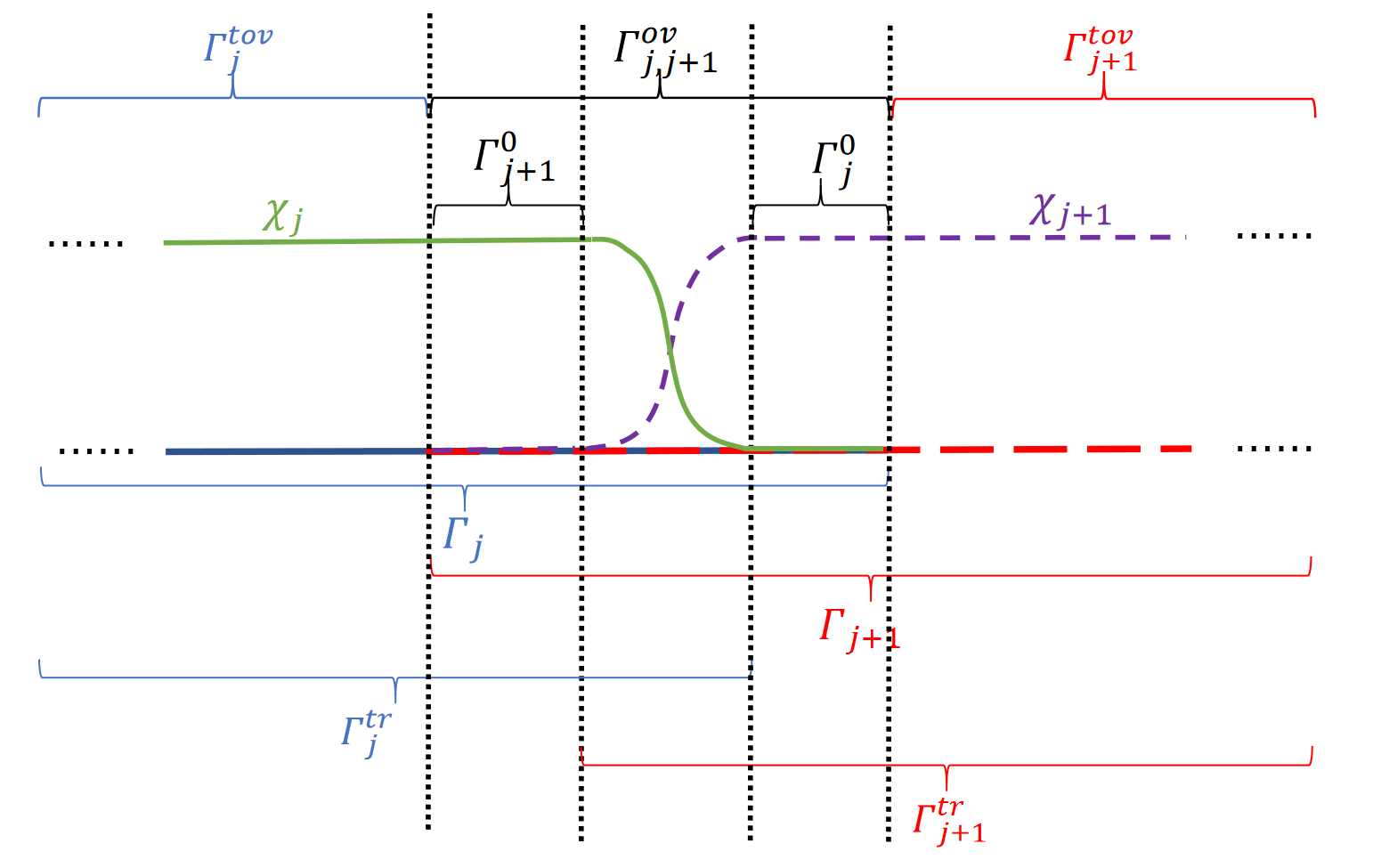} &
\includegraphics[scale=0.3]{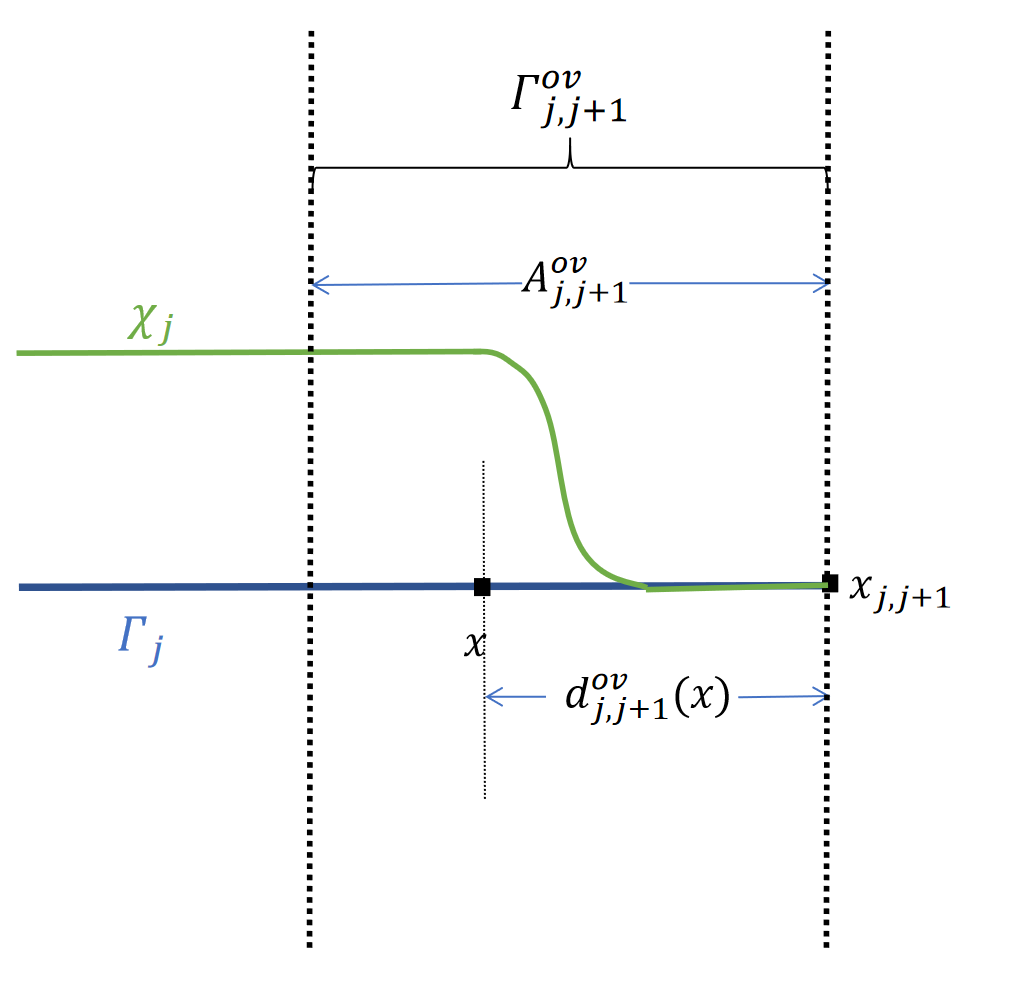} \\
(a) & (b)
\end{tabular}
\caption{Notations used for the boundary partitioning (a) and windowing functions (b).}
\label{MSmodel-patchwindow}
\end{figure}

%

Taking into account the finite propagation speed that characterizes
the solutions of the wave equation, or, more precisely, the
``Huygens-principle'' Condition~\ref{huygens-conj} introduced in
Section~\ref{sec:2.2}, we propose to produce the solution of the
original problem (\ref{waveeqn0}) in the interior domain $D$ as the
sum of wave-equation solutions multiply scattered by the arcs
$\Gamma_j, j=1,\cdots,N$. We thus consider the wave equation problems
exterior to each arc $\Gamma_j$, $j=1,\cdots,N$,
\begin{eqnarray}
\label{waveeqn-Gj2}
\begin{cases}
\pa_t^2u_j(x,t)-c^2\Delta u_j(x,t)=0, & (x,t)\in \Gamma_j^\mathrm{c}\times\R, \cr
u_j(x,t)= f_j(x,t),  & (x,t)\in\Gamma_j\times\R,
\end{cases}
\end{eqnarray}
where $\Gamma_j^\mathrm{c}=\R^2\backslash\Gamma_j$. Theorem~\ref{welltime}
tells us that, for given $p\in\R$, $\sigma>0$, $\alpha\ge0$ and for
$f_j\in H_{\sigma,\alpha}^p(\R,H^{1/2}(\Gamma_j))$, the wave equation
problem \eqref{waveeqn-Gj2} admits a unique solution
$u_j\in H_{\sigma,\alpha}^{p-3}(\R,H_{loc}^1(\Gamma_j^c))$. The
following lemma, whose proof is straightforward, expresses the
Condition~\ref{huygens-conj} in terms related to the curve
$\mathcal C = \Gamma_j$ in~(\ref{waveeqn-Gj2}) and the complete
boundary $\Gamma$---and thus presents that Huygens condition in
the form in which it will be used in the proof of
Theorem~\ref{equivalence-int}.
\begin{lemma}
\label{localpro}
Let $p\in\R$, $\sigma>0$ and $T_0>0$, and assume that, for all
$j=1,\cdots,N$,
\begin{itemize}
\item[(a)] $f_j\in H_{\sigma,T_0}^{p}(\R,H^{1/2}(\Gamma_j))$ satisfies
    \begin{equation}\label{ini_set}
    f_j(x,t)=0\quad\mbox{for}\quad (x,t)\in\Gamma_{j}^0\times \R;
    \end{equation}
\item[(b)] $\Gamma_j$ satisfies Condition~\ref{huygens-conj}.
\end{itemize}
Then, letting $t_0=\delta_{\mathrm{min}}/c>0$, and
calling $u_j\in H_{\sigma,T_0}^{p-3}(\R,H_{loc}^1(\Gamma_j^c))$ the unique solution of the wave equation problem \eqref{waveeqn-Gj2}, we have
\begin{equation}\label{ini_set_res}
u_j(x,t)=0\quad\mbox{for}\quad
  (x,t)\in (\Gamma\backslash\Gamma_j) \times (-\infty,T_0+t_0].
\end{equation}
\end{lemma}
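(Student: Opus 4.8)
The plan is to derive the conclusion~\eqref{ini_set_res} by applying the restricted Huygens Condition~\ref{huygens-conj} to the arc $\mathcal{C}=\Gamma_j$ and the boundary data $g=f_j$. The key observation is that hypothesis~(a), namely $f_j(x,t)=0$ for $x\in\Gamma_j^0$, forces the support of $f_j$ (in $x$) to be contained in the truncated portion $\Gamma_j^{\mathrm{tr}}=\Gamma_j\backslash\Gamma_j^0$. First I would set $\mathcal{C}^{\mathrm{inc}}:=\Gamma_j^{\mathrm{tr}}$ and verify the geometric admissibility hypothesis of Condition~\ref{huygens-conj}: since the parameter choice $0<c_0<1/2<c_1<1$ was arranged precisely so that $\mathrm{dist}\{\Gamma_j^{\mathrm{tr}},\Gamma\backslash\Gamma_j\}>0$, and since the endpoints $e_1,e_2$ of $\Gamma_j$ lie in $\Gamma\backslash\Gamma_j$ (they are not interior to $\Gamma_j$), we obtain $\mathrm{dist}(\mathcal{C}^{\mathrm{inc}},\{e_1,e_2\})>0$. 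By the definition~\eqref{dist} of $\delta_{\mathrm{min}}$ as a minimum over all $j$, this distance is bounded below by $\delta_{\mathrm{min}}$, so that $c^{-1}\mathrm{dist}(\mathcal{C}^{\mathrm{inc}},\{e_1,e_2\})\ge\delta_{\mathrm{min}}/c=t_0$.

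Next I would address the shift in the time origin. Condition~\ref{huygens-conj} as stated assumes data that is causal in the sense $g(x,t)=0$ for $t\le 0$ (the support condition~\eqref{DoI_ball_ass} is imposed ``for all $t>0$''), and it then controls the solution on $t\le c^{-1}\mathrm{dist}(\mathcal{C}^{\mathrm{inc}},\{e_1,e_2\})$. Here, however, hypothesis~(a) places $f_j$ in $H_{\sigma,T_0}^p$, meaning $f_j$ vanishes for $t\le T_0$ rather than $t\le 0$. I would therefore apply the condition to the time-translated data $\tilde f_j(x,t):=f_j(x,t+T_0)$, whose associated solution is the corresponding translate $\tilde u_j(x,t)=u_j(x,t+T_0)$ by time-translation invariance of the wave equation~\eqref{waveeqn-arc}. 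Applying Condition~\ref{huygens-conj} to $\tilde f_j$ yields $\{x:\tilde u_j(x,t)\ne 0\}\subseteq\Lambda^s(t)=\{x:\mathrm{dist}(x,\Gamma_j^{\mathrm{tr}})\le ct\}$ for all $t\le t_0$.

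It then remains to translate this support statement back to the original time variable and to restrict it to the target set $\Gamma\backslash\Gamma_j$. For $x\in\Gamma\backslash\Gamma_j$ and for any $t\le t_0$, the lower bound $\mathrm{dist}(x,\Gamma_j^{\mathrm{tr}})\ge\mathrm{dist}(\Gamma\backslash\Gamma_j,\Gamma_j^{\mathrm{tr}})\ge\delta_{\mathrm{min}}=ct_0\ge ct$ shows that $x\notin\Lambda^s(t)$, whence $\tilde u_j(x,t)=0$; undoing the translation gives $u_j(x,t)=0$ for all $(x,t)\in(\Gamma\backslash\Gamma_j)\times(-\infty,T_0+t_0]$, which is exactly~\eqref{ini_set_res}. (The range $t\le T_0$ is covered separately and trivially, since $u_j\in H_{\sigma,T_0}^{p-3}$ vanishes there by causality.)

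The main obstacle, such as it is, lies not in any deep estimate but in the careful bookkeeping of the two distinct support hypotheses and the correct identification of $\mathcal{C}^{\mathrm{inc}}$: one must check that $\Gamma_j^{\mathrm{tr}}$ genuinely satisfies the ``$\mathrm{dist}>0$ from the endpoints'' requirement of Condition~\ref{huygens-conj} and that the minimum~\eqref{dist} furnishes a single uniform time $t_0$ valid for every $j$. Since Condition~\ref{huygens-conj} is assumed throughout the paper, the only genuine work is this translation-and-distance accounting, which is why the statement is described as having a straightforward proof.
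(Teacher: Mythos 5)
Your approach is the one the paper intends: the paper omits the proof of Lemma~\ref{localpro} entirely (declaring it ``straightforward''), and the straightforward argument is exactly what you carry out---apply Condition~\ref{huygens-conj} with $\mathcal{C}=\Gamma_j$ and $\mathcal{C}^{\mathrm{inc}}=\Gamma_j^{\mathrm{tr}}$, translate time by $T_0$ to restore causality, and convert the cone containment into~\eqref{ini_set_res} via the bound $\mathrm{dist}(\Gamma_j^{\mathrm{tr}},\Gamma\backslash\Gamma_j)\ge\delta_{\mathrm{min}}$, with uniformity in $j$ supplied by the minimum in~\eqref{dist}. The overall bookkeeping of the two support hypotheses and of the time shift is correct.

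Two points need repair, one cosmetic and one substantive. First, the endpoints $e_1,e_2$ of $\Gamma_j$ do not lie in $\Gamma\backslash\Gamma_j$ as you assert: the paper places them in the overlap regions $\Gamma_{j-1,j}^{\mathrm{ov}}$ and $\Gamma_{j,j+1}^{\mathrm{ov}}$, hence in $\Gamma_j$ itself. They are, however, limit points of $\Gamma\backslash\Gamma_j$ (the neighboring patches continue past them), and since the distance from a set to another set equals its distance to that set's closure, your bound $\mathrm{dist}(\Gamma_j^{\mathrm{tr}},\{e_1,e_2\})\ge\delta_{\mathrm{min}}$ survives. Second, your final step claims that $\mathrm{dist}(x,\Gamma_j^{\mathrm{tr}})\ge ct$ implies $x\notin\Lambda^s(t)$; but $\Lambda^s(t)$ is defined by the \emph{non-strict} inequality $\mathrm{dist}(x,\mathcal{C}^{\mathrm{inc}})\le ct$, so the implication fails exactly when equality holds, namely at the shifted time $t-T_0=t_0$ for points $x\in\Gamma\backslash\Gamma_j$ realizing $\mathrm{dist}(x,\Gamma_j^{\mathrm{tr}})=\delta_{\mathrm{min}}$. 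Such points exist (by compactness) for any patch attaining the minimum in~\eqref{dist}, and they lie inside the closed target set of~\eqref{ini_set_res}, so as written the argument does not cover all of $(-\infty,T_0+t_0]$. The gap is harmless and easily closed: the uncovered set is contained in $\left\{x:\mathrm{dist}(x,\Gamma_j^{\mathrm{tr}})=\delta_{\mathrm{min}}\right\}\times\{T_0+t_0\}$, i.e., in a single time slice, which is a null subset of $(\Gamma\backslash\Gamma_j)\times\R$; hence the vanishing established on the complementary full-measure portion extends to the whole closed set in the a.e./trace sense in which~\eqref{ini_set_res} is understood for $u_j\in H_{\sigma,T_0}^{p-3}(\R,H_{loc}^1(\Gamma_j^{\mathrm{c}}))$.
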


To describe the proposed multiple-scattering scheme, we use the
notational conventions 
\be\label{sum_conv}
\sum_{k\backslash \{j\}}:=\sum_{k=1, k\ne
  j}^N\qquad\mbox{and} \qquad \sum_{k\backslash\{j,j+1\}}:=\sum_{k=1,
  k\ne j, k\ne (j+1)}^N, 
\en 
and, for $j=1,\cdots,N$, we inductively
define boundary-condition functions $g_{j,m}(x,t)$ ($m\geq 1$) and
associated wave-equation solutions $v_{j,m}(x,t)$ ($m\geq 1$) to the
problems (\ref{waveeqn-Gj2}) as detailed in what follows.

\begin{definition}\label{inductive_int}
  For $m\in\mathbb{N}$, we define $v_{j,m}(x,t)$ to equal
  the solution $u_j(x,t)$ of the wave equation
  problem~(\ref{waveeqn-Gj2}) with boundary-condition function
  $f_j = g_{j,m}$---so that $v_{j,m}$ satisfies the boundary
  conditions
\begin{equation}\label{bdry_int}
v_{j,m}(x,t) = g_{j,m}(x,t)\quad \mbox{for}\quad (x,t)\in \Gamma_j\times \R,\quad m \in\mathbb{N},
\end{equation}
for all $j=1,\cdots,N$, where
$g_{j,m}(x,t):\Gamma_{j}\times \R\to\mathbb{C}$ denotes the causal
functions defined inductively via the relations
\begin{equation}
  \label{criterion_int1}
  g_{j,1}(x,t)= -\chi_j(x)u^i(x,t)\quad (x,t)\in \Gamma_j\times \R,
\end{equation}
and, for $m\ge 1$,
\begin{equation}
  \label{criterion_int2}
  g_{j,m+1}(x,t)=-\chi_j(x) \sum_{k\backslash \{j\}}\widetilde v_{k,j,m}(x,t), \quad (x,t)\in  \Gamma_j\times \R.
\end{equation}
Here, for $j=1,\dots,N$, $(x,t)\in\Gamma_{j}\times \R$,
$k=1,\cdots,N, k\ne j$, and $m\ge 1$, $\widetilde v_{k,j,m}(x,t)$ denotes
the function
\begin{equation}\label{vtilde}
  \widetilde v_{k,j,m}(x,t)=
\begin{cases}
v_{k,m}(x,t), & x\in\Gamma_j, k\notin\{j-1, j+1\}  ,\cr
v_{k,m}(x,t), & x\in\Gamma_j\backslash \Gamma_k, k=j-1\; \mathrm{and}\; k=j+1  , \cr
0, & x\in\Gamma_{j-1,j}^{\mathrm{ov}}, k=j-1, \cr
0, & x\in\Gamma_{j,j+1}^{\mathrm{ov}}, k=j+1.
\end{cases}
\end{equation}
\end{definition}

\begin{remark}
\label{remarkg}
  The definition of the windowing function $\chi_j$ ($j=1,\cdots,N$)
  ensures that
  \be
\label{vanishing}
g_{j,m}(x,t)=0\quad \mbox{for}\quad (x,t)\in \Gamma_{j}^0\times
\R,\quad j=1,\cdots,N,\;m \in\mathbb{N}, \en which is a crucial
property in the proposed multiple scattering algorithm---in that it
ensures that the solution of the problem of scattering by the open arc
$\Gamma_j$ with boundary conditions given by the incident field
$g_{j,m}$ coincides, at least for a certain time interval, with the
solution of the problem of scattering by the complete surface
$\Gamma$, with incident field equal to $g_{j,m}$ for $x\in \Gamma$.
Moreover, the function $g_{j,m}(x,t)$ is continuous and vanishes at
the endpoints of $\Gamma_j^{\mathrm{tov}}$, despite the seemingly
discontinuous nature of the closely related definition~\eqref{vtilde}
at those points. The claimed continuity of $g_{j,m}(x,t)$ holds since,
for $m \geq 2$ and $k = j\pm 1$, $g_{k,m}(x,t)$ is continuous and
vanishes at the endpoints of $\Gamma_j^{\mathrm{tov}}$ for all
$t \in \mathbb{R}$ (as it follows from the
definitions~\eqref{criterion_int1}--\eqref{criterion_int2} with
$j=k$), which implies that $v_{k,m}$, and, thus,
$\widetilde{v}_{k,j,m}(x,t)$, tend to zero, for all
$t \in \mathbb{R}$, as $x$ approaches the endpoints of
$\Gamma_j^{\mathrm{tov}}$.
\end{remark}

We now consider the $M\times N$-th order multiple-scattering sum
\begin{equation}\label{mult-scatt-int}
  u_M(x,t):=\sum_{m=1}^M\sum_{j=1}^N
  v_{j,m}(x,t),
\end{equation}
which includes contributions from all $M\times N$ ``ping-pong''
scattering iterates $v_{j,m}(x,t)$ with $j=1,\cdots,N$,
$m=1,\dots, M$. Theorem~\ref{equivalence-int} below establishes that,
throughout $D$, the multiple-scattering sum $u_M$ coincides with the
exact solution $u$ of the original wave equation problem
(\ref{waveeqn0}) for $t\leq T(M)$, where, for $M\in\mathbb{N}$ we
define
\begin{equation}
  \label{eq:TM}
  T(M) = M \delta_{\mathrm{min}}/c.
\end{equation}
The proof of Theorem~\ref{equivalence-int} relies on certain
relations, established in the following lemma, concerning the
solutions $v_{j,m}$ and the boundary value functions $g_{j,m}$
introduced in Definition~\ref{inductive_int}.
\begin{lemma}\label{localrels} With reference to
  equation~\eqref{eq:TM}, for each $M\in\mathbb{N}$ and each
  integer $j$, $1\leq j\leq N$, the relations
\begin{align}
&  v_{j,M}(x,t)+\sum_{m=1}^{M-1}\sum_{k=1}^N v_{k,m}(x,t)+u^i(x,t)=0, \quad (x,t)\in\Gamma_j^{\mathrm{tov}}\times \R,\label{A1}\\
& v_{j,M}(x,t)+v_{j\pm 1,M}(x,t)+\sum_{m=1}^{M-1}\sum_{k=1}^N v_{k,m}(x,t)+u^i(x,t)=0, \quad (x,t)\in\left(\Gamma_j\cap \Gamma_{j\pm 1}\right)\times \R,  \quad \mbox{and}\label{A2}\\
& g_{j,M}(x,t)=0,\quad (x,t)\in\Gamma_{j}\times (-\infty,T(M-1)]\phantom{\sum_{k=1}^N}\label{A3}
\end{align}
are satisfied.
\end{lemma}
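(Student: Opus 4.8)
The plan is to establish the three relations \eqref{A1}, \eqref{A2} and \eqref{A3} by induction on $M$, exploiting the recursive definition of the boundary data $g_{j,M}$ in terms of the previous-order solutions $v_{k,M-1}$, together with the boundary identity $v_{j,M}=g_{j,M}$ on $\Gamma_j$ from \eqref{bdry_int} and the partition-of-unity property \eqref{eq:POU2}. The base case $M=1$ follows directly from the definitions: on $\Gamma_j^{\mathrm{tov}}$ one has $\chi_j=1$ and $\chi_k=0$ for $k\ne j$, so $v_{j,1}=g_{j,1}=-u^i$ and \eqref{A1} reduces to $-u^i+u^i=0$; on each overlap $\Gamma_j\cap\Gamma_{j\pm1}$ one has $\chi_j+\chi_{j\pm1}=1$, whence $v_{j,1}+v_{j\pm1,1}=-(\chi_j+\chi_{j\pm1})u^i=-u^i$, giving \eqref{A2}; and \eqref{A3} holds because $g_{j,1}=-\chi_j u^i$ vanishes for $t\le0=T(0)$, since $u^i$ is causal.

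For the inductive step I would treat the time-independent algebraic identities \eqref{A1}--\eqref{A2} and the time-domain statement \eqref{A3} by parallel but separate arguments. For \eqref{A1}, the key observation is that on $\Gamma_j^{\mathrm{tov}}$ the point $x$ lies in no neighboring overlap, so \eqref{vtilde} yields $\widetilde v_{k,j,M-1}=v_{k,M-1}$ for every $k\ne j$; hence $v_{j,M}=g_{j,M}=-\sum_{k\backslash\{j\}}v_{k,M-1}$ there. Substituting into the left-hand side of \eqref{A1} and regrouping, the order-$(M-1)$ terms reassemble into $v_{j,M-1}+\sum_{m=1}^{M-2}\sum_{k=1}^N v_{k,m}+u^i$, which vanishes by the inductive hypothesis \eqref{A1}. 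The argument for \eqref{A2} is structurally identical: on $\Gamma_{j,j+1}^{\mathrm{ov}}$ the ``self-cancellation'' cases of \eqref{vtilde} set $\widetilde v_{j+1,j,M-1}=0$ and $\widetilde v_{j,j+1,M-1}=0$, so that $v_{j,M}=-\chi_j\sum_{k\backslash\{j,j+1\}}v_{k,M-1}$ and $v_{j+1,M}=-\chi_{j+1}\sum_{k\backslash\{j,j+1\}}v_{k,M-1}$; adding these and using $\chi_j+\chi_{j+1}=1$ gives $v_{j,M}+v_{j+1,M}=-\sum_{k\backslash\{j,j+1\}}v_{k,M-1}$, after which the same regrouping reduces \eqref{A2} at level $M$ to \eqref{A2} at level $M-1$. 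Note that \eqref{A1}--\eqref{A2} hold for all $t\in\R$, as they are purely algebraic consequences of the boundary definitions.

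Relation \eqref{A3} is the one place where finite propagation speed enters, and I would prove it by its own induction via Lemma~\ref{localpro}. Assuming \eqref{A3} at level $M-1$, namely $g_{k,M-1}\in H^{p}_{\sigma,T(M-2)}$ vanishing for $t\le T(M-2)$, and recalling from Remark~\ref{remarkg} that $g_{k,M-1}$ vanishes on $\Gamma_k^0\times\R$, Lemma~\ref{localpro} (with $T_0=T(M-2)$ and $t_0=\delta_{\mathrm{min}}/c$) gives $v_{k,M-1}(x,t)=0$ for $x\in\Gamma\backslash\Gamma_k$ and $t\le T(M-2)+t_0=T(M-1)$. The decisive point is that in every case of \eqref{vtilde} for which $\widetilde v_{k,j,M-1}$ is not declared to vanish, the evaluation point $x\in\Gamma_j$ in fact lies in $\Gamma\backslash\Gamma_k$---for $k\notin\{j-1,j+1\}$ because $\Gamma_j\cap\Gamma_k=\emptyset$, and for $k=j\pm1$ because the relevant case restricts $x$ to $\Gamma_j\backslash\Gamma_k$. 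Hence $g_{j,M}=-\chi_j\sum_{k\backslash\{j\}}\widetilde v_{k,j,M-1}$ vanishes for $t\le T(M-1)$, which is \eqref{A3} at level $M$.

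The main obstacle, and the step requiring the most care, is the bookkeeping associated with the piecewise definition \eqref{vtilde} of $\widetilde v_{k,j,m}$: one must verify, region by region---the truncated core $\Gamma_j^{\mathrm{tov}}$ and each overlap $\Gamma_{j,j+1}^{\mathrm{ov}}$---exactly which summands survive, and confirm that every surviving evaluation point lies in $\Gamma\backslash\Gamma_k$, so that the Huygens-type conclusion of Lemma~\ref{localpro} applies with the sharp delay $t_0=\delta_{\mathrm{min}}/c$ that makes $T(M-1)=T(M-2)+t_0$. Once this case analysis is pinned down, the algebraic collapse underlying \eqref{A1}--\eqref{A2} and the time-shift underlying \eqref{A3} follow mechanically.
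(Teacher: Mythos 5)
Your proposal is correct and follows essentially the same route as the paper's proof: induction on $M$ with the base case read off from \eqref{criterion_int1} and the POU property, the inductive steps for \eqref{A1}--\eqref{A2} obtained by the algebraic collapse of \eqref{criterion_int2} and \eqref{vtilde} combined with $\chi_j+\chi_{j+1}=1$ on overlaps (the paper phrases this as taking the difference of the level-$(L+1)$ and level-$L$ identities, which is the same computation as your substitute-and-regroup), and the inductive step for \eqref{A3} via Lemma~\ref{localpro} applied with $T_0=T(M-2)$, checking case by case in \eqref{vtilde} that every surviving evaluation point lies in $\Gamma\backslash\Gamma_k$. No gaps beyond those shared with the paper's own argument.
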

\begin{proof} We establish~\eqref{A1} through~\eqref{A3} by induction
  in $M$. In the case $M=1$, \eqref{A1} follows directly
  from~\eqref{bdry_int} and~\eqref{criterion_int1};
  equation~\eqref{A2} results once again from~\eqref{bdry_int}
  and~\eqref{criterion_int1} together with~\eqref{eq:POU1}
  and~\eqref{eq:POU2}; and~\eqref{A3} follows
  from~\eqref{criterion_int1} since
  $u^i\in H_{\sigma,0}^{p}(\R,H^{1/2}(\pa D))$ and, thus, vanishes for
  $t\leq 0$ (Section~\ref{sec:2.3}). Having established~\eqref{A1}
  through~\eqref{A3} for $M=1$, the corresponding inductive steps are
  considered next.

  Let $L\in \mathbb{N}$ and assume~\eqref{A1} holds for $M= L$ and
  $1\leq j\leq N$. Then, the validity of~\eqref{A1} with $M=L+1$ is
  equivalent to the validity of the relation
  \begin{equation}
    \label{eq:A1diff1}
    v_{j,L+1}(x,t)+\sum_{k\backslash \{j\}} v_{k,L}(x,t) = 0,\quad (x,t)\in\Gamma_j^{\mathrm{tov}}\times \R,
  \end{equation}
  which results as the differnce between the $M = L+1$ and the $M=L$
  versions of~\eqref{A1}.  But, in view of~\eqref{bdry_int}
  and~\eqref{vtilde} and since $\chi_j(x) = 1$ for
  $x\in \Gamma_j^{\mathrm{tov}}$,~\eqref{eq:A1diff1} coincides
  with~\eqref{criterion_int2} for
  $(x,t)\in \Gamma_j^{\mathrm{tov}}\times\mathbb{R}$ and therefore
  holds true. The proof of~\eqref{A1} for $M\in\mathbb{N}$ is thus
  complete.

  We next consider the inductive step for~\eqref{A2} with a ``$+$''
  sign; the relation with a ``$-$'' sign follows similarly. Let us
  thus assume the $+$-sign version of~\eqref{A2} holds for $M = L$ for
  a given $L\in\mathbb{N}$ and for $1\leq j\leq N$. The validity of
  the $+$-sign version of~\eqref{A2} for $M = L+1$ is equivalent to
  the validity of the relation
  \begin{equation}
    \label{eq:A1diff}
    v_{j,L+1}+v_{j+1,L+1}+\sum_{k\backslash\{j,j+1\}}v_{k,L}(x,t)
    = 0,\quad (x,t)\in \Gamma_{j,j+1}^{\mathrm{ov}}\times \R,
 \end{equation}
 that results as the difference between the $M = L+1$ and the $M=L$
 $+$-sign relations. But, in view of~\eqref{bdry_int}
 and~\eqref{criterion_int2} and using the notations~\eqref{sum_conv}, the relation~\eqref{eq:A1diff} may be
 re-expressed in the form
 $-\chi_j(x) \sum_{k\backslash \{j\}}\widetilde v_{k,j,L}(x,t)
 -\chi_{j+1}(x) \sum_{k\backslash \{j+1\}}\widetilde
 v_{k,j+1,L}(x,t)+\sum_{k\backslash \{j,j+1\}}v_{k,L}(x,t) =
 0$. Using all four expressions in~\eqref{vtilde} for the case
 $x\in\Gamma_{j,j+1}^{\mathrm{ov}}$
 considered presently, in turn, the latter relation is equivalent to
 $-\chi_j(x) \sum_{k\backslash \{j,j+1\}} v_{k,j,L}(x,t)
 -\chi_{j+1}(x) \sum_{k\backslash \{j,j+1\}}
 v_{k,j+1,L}(x,t)+\sum_{k\backslash \{j,j+1\}}v_{k,L}(x,t) =
 0$---which clearly holds true in view of~ \eqref{eq:POU1}
 and~\eqref{eq:POU2}. The proof of~\eqref{A2} is thus complete.

 We finally consider the inductive step for~\eqref{A3}: assuming this
 relation holds for $M = L\in\mathbb{N}$ and for $1\leq j\leq N$, we
 seek to establish its $M=L+1$ version which, in view
 of~\eqref{criterion_int2}, may equivalently be expressed in the form
\begin{equation}
  \label{eq:A3Lp1}
  -\chi_j(x) \sum_{k\backslash \{j\}}\widetilde v_{k,j,L
  }(x,t) =0  \quad\mbox{on}\quad\Gamma_j\times (-\infty,T(L)].
\end{equation}
To establish~\eqref{eq:A3Lp1}, using~\eqref{bdry_int},~\eqref{vtilde}
and the inductive hypothesis we show that each term
$\widetilde v_{k,j,L}(x,t)$ in the sum~\eqref{eq:A3Lp1} vanishes for
$(x,t)\in \Gamma_j\times (-\infty,T(L)]$, and we do this for each
one of the two cases $k=j\pm 1$ and $k\ne j\pm1$. In the first case,
and restricting attention to the subcase $k=j+1$ (since the subcase
$k=j-1$ follows similarly) we see from~\eqref{vtilde} that
$\widetilde v_{j+1,j,L}(x,t)=0$ for
$(x,t)\in\Gamma_{j,j+1}^{\mathrm{ov}}\times\R$, and
$\widetilde v_{j+1,j,L}(x,t) = v_{j+1,L}(x,t)$ for
$(x,t)\in(\Gamma_j\setminus\Gamma_{j+1})\times\R$, so that, for the
$k=j+1$ subcase considered presently, it only remains to show that
$v_{j+1,L}(x,t) = 0$ for
$(x,t)\in(\Gamma_j\setminus\Gamma_{j+1})\times(-\infty,T(L)]$. But,
in view of~\eqref{bdry_int} we have
$v_{j+1,L}(x,t) = g_{j+1,L}(x,t)$ for
$(x,t)\in\Gamma_{j+1}\times\R$, and, thus (i)~$v_{j+1,L}(x,t)=0$ for
$(x,t)\in\Gamma_{j+1}\times(-\infty,T(L-1)]$ (since $g_{j+1,L}(x,t) = 0$
for $(x,t)\in\Gamma_{j+1}\times(-\infty,T(L-1)]$ by the $M=L$ inductive
hypothesis); and (ii)~ $v_{j+1,L}(x,t)=0$ for
$(x,t)\in\Gamma_{j+1}^{0}\times\R$ (since, on account of the
$\chi_{j+1}(x)$ factor in~\eqref{criterion_int2} we have
$g_{j+1,L}(x,t) = 0$ for
$(x,t)\in\Gamma_{j+1}^{0}\times\R$). The conclusions in the
points~(i) and~(ii) just considered enable us to invoke
Lemma~\ref{localpro} with $j$ replaced by $j+1$, and, thus, to
ascertain that $v_{j+1,L}(x,t)=0$ for
$(x,t)\in(\Gamma_j\setminus\Gamma_{j+1})\times(-\infty,T(L)]$---which
concludes the proof in the case $k=j\pm 1$. In the case $k\ne j\pm1$,
finally, by~\eqref{vtilde} we have
$\widetilde v_{k,j,L}(x,t)=v_{k,L}(x,t)$, and the proof follows by
invoking Lemma~\ref{localpro} once again, on the basis of the
inductive hypothesis (which tells us that
$v_{k,L}(x,t) = g_{k,L}(x,t) = 0$ for
$(x,t)\in\Gamma_k\times(-\infty,T(L-1)]$), together with the fact that
$\mathrm{dist}(\Gamma_k,\Gamma_j)\geq \delta_\mathrm{min}$, to
conclude that $\widetilde v_{k,j,L}(x,t) = v_{k,L}(x,t)=0$ for
$(x,t)\in\Gamma_j\times(-\infty,T(L)]$, as needed. The proof of the
lemma is now complete.
\end{proof}

\begin{theorem}
\label{equivalence-int}
Let $M\in\mathbb{N}$, $p\in\R$ and $\sigma>0$. Then, with reference to equation~\eqref{sum_conv}, for given
$u^i\in H_{\sigma,0}^{p}(\R,H^{1/2}(\Gamma))$ we have
$u_M \in H_{0}^{p-3/2}(T(M),H^1(D))$, and the solution $u$ of the
problem~\eqref{waveeqn} with $E=D$ (interior problem) is given by
\begin{equation}\label{thm_eqn}
  u(x,t) =u_M(x,t)\quad \mbox{for}\quad  (x,t)\in D\times (-\infty,T(M)].
\end{equation}
\end{theorem}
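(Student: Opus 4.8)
The plan is to show that the restriction of $u_M$ to $D$ solves the same interior boundary value problem as the exact solution $u$ up to time $T(M)$, and then to conclude by the uniqueness furnished by Theorem~\ref{welltime}. First note that each iterate $v_{j,m}$ solves the wave equation in $\Gamma_j^{\mathrm c}\supset D$, so by linearity the finite sum $u_M$ in~\eqref{mult-scatt-int} satisfies the homogeneous wave equation in $D\times\R$; moreover, an induction on $m$ using~\eqref{criterion_int1}--\eqref{criterion_int2} and Theorem~\ref{welltime} shows that every $g_{j,m}$, and hence every $v_{j,m}$ and $u_M$, is causal and of finite (if a priori low) regularity. It therefore remains only to match the Dirichlet data, i.e.\ to prove that $u_M(x,t)=-u^i(x,t)=f(x,t)$ for all $x\in\Gamma$ and all $t\le T(M)$.

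For the boundary matching I would cover $\Gamma$ by the cores $\Gamma_j^{\mathrm{tov}}$ and the overlaps $\Gamma_{j,j+1}^{\mathrm{ov}}$ and treat these two cases by means of~\eqref{A1} and~\eqref{A2}, respectively. On a core point $x\in\Gamma_j^{\mathrm{tov}}$, splitting the sum~\eqref{mult-scatt-int} as $u_M=\bigl(v_{j,M}+\sum_{m=1}^{M-1}\sum_{k=1}^N v_{k,m}\bigr)+\sum_{k\backslash\{j\}}v_{k,M}$ and using~\eqref{A1} to replace the parenthesized term by $-u^i$ gives
\begin{equation*}
u_M(x,t)=-u^i(x,t)+\sum_{k\backslash\{j\}}v_{k,M}(x,t),\qquad (x,t)\in\Gamma_j^{\mathrm{tov}}\times\R,
\end{equation*}
and the analogous splitting together with~\eqref{A2} gives $u_M=-u^i+\sum_{k\backslash\{j,j+1\}}v_{k,M}$ on $\Gamma_{j,j+1}^{\mathrm{ov}}\times\R$. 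In either case it suffices to show that the correction sum vanishes for $t\le T(M)$. This is where Lemma~\ref{localpro} enters: for each surviving index $k$ the point $x$ lies off the patch $\Gamma_k$ --- indeed a core point of $\Gamma_j$ belongs to no patch other than $\Gamma_j$, and an overlap point of $\Gamma_{j,j+1}^{\mathrm{ov}}$ belongs to no patch other than $\Gamma_j$ and $\Gamma_{j+1}$, because $\Gamma_j$ overlaps only its neighbors and the two overlap regions of each patch are disjoint --- so that $x\in\Gamma\setminus\Gamma_k$. Since~\eqref{A3} (with $k$ in place of $j$) shows that $g_{k,M}$ vanishes for $t\le T(M-1)$, Remark~\ref{remarkg} shows that $g_{k,M}$ vanishes on $\Gamma_k^0$, and Condition~\ref{huygens-conj} is assumed throughout, the hypotheses of Lemma~\ref{localpro} hold with $T_0=T(M-1)$ and $t_0=\delta_{\mathrm{min}}/c$; the lemma then yields $v_{k,M}(x,t)=0$ for $(x,t)\in(\Gamma\setminus\Gamma_k)\times(-\infty,T(M-1)+\delta_{\mathrm{min}}/c]$. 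In view of the identity $T(M-1)+\delta_{\mathrm{min}}/c=T(M)$ coming from~\eqref{eq:TM}, the correction sums vanish for $t\le T(M)$, and hence $u_M=f$ on $\Gamma\times(-\infty,T(M)]$.

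To finish, set $w=u-u_M$ on $D$. Then $w$ is a causal solution of the homogeneous interior wave equation whose boundary trace $h=f-u_M|_\Gamma$ vanishes for $t\le T(M)$; being causal and of finite regularity, $h\in H_{\sigma,T(M)}^{p'}(\R,H^{1/2}(\Gamma))$ for a suitable $p'$. By Theorem~\ref{welltime} with $\alpha=T(M)$, the interior problem with data $h$ has a unique solution, and it lies in $H_{\sigma,T(M)}^{p'-3/2}(\R,H^1(D))$, whose elements vanish for $t\le T(M)$; since $w$ is, by uniqueness, that solution, we conclude $w=0$ on $D\times(-\infty,T(M)]$, which is exactly~\eqref{thm_eqn}. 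The sharp regularity $u_M\in H_0^{p-3/2}(T(M),H^1(D))$ then follows a posteriori: since $f=-u^i\in H_{\sigma,0}^p(\R,H^{1/2}(\Gamma))$, Theorem~\ref{welltime} gives $u\in H_{\sigma,0}^{p-3/2}(\R,H^1(D))$, and $u_M$ agrees with $u$ for $t\le T(M)$.

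I expect the boundary-matching step to be the main obstacle, and within it the two ingredients just used: the geometric verification that core and overlap points of $\Gamma_j$ lie in $\Gamma\setminus\Gamma_k$ for every index $k$ surviving in the correction sums, and the exact timing bookkeeping $T(M)=T(M-1)+\delta_{\mathrm{min}}/c$, which lets the Huygens-induced delay of Lemma~\ref{localpro} reach precisely $T(M)$ --- so that the spurious multiple-scattering corrections $v_{k,M}$ have not yet arrived at the matching portion of the boundary. A secondary, bookkeeping-type subtlety is that the naive regularity of $u_M$ degrades by three derivatives at each of the $M$ scattering steps, so the sharp loss of only $3/2$ derivatives claimed in the statement is not visible directly from the construction and must instead be read off from the established identity $u_M=u$.
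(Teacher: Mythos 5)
Your proof is correct and follows essentially the same route as the paper's: both establish the boundary matching on the cores $\Gamma_j^{\mathrm{tov}}$ and overlaps $\Gamma_{j,j\pm1}^{\mathrm{ov}}$ by combining the relations~\eqref{A1}--\eqref{A3} of Lemma~\ref{localrels} with Lemma~\ref{localpro} and the timing identity $T(M-1)+\delta_{\mathrm{min}}/c=T(M)$, and then conclude via uniqueness. The only cosmetic differences are the order of the two matching steps (the paper first eliminates the spurious terms $v_{k,M}$ and then invokes~\eqref{A1}--\eqref{A2}, while you do the reverse) and your a posteriori derivation of the regularity claim $u_M\in H_0^{p-3/2}(T(M),H^1(D))$ from the identity $u_M=u$, which the paper instead obtains by citing the proof of \cite[Theorem 2.8]{BY23}.
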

\begin{proof}
  The fact that $u_M \in H_{0}^{p-3/2}(T(M),H^1(D))$ follows as in the
  proof of \cite[Theorem 2.8]{BY23}. In view of the uniqueness of
  solution to equation~\eqref{waveeqn-Gj2}, to
  establish~\eqref{thm_eqn} it suffices to show that for all
  $M\in\mathbb{N}$, the function $u_M$ satisfies the boundary
  conditions
\begin{equation}\label{bound-int0}
u_M(x,t)+u^i(x,t)=0\quad\mbox{for}\quad (x,t)\in\Gamma\times (-\infty,T(M)],
\end{equation}
or, equivalently, since $\Gamma = \cup_{j=1}^N \Gamma_j$, that
\begin{equation}\label{bound-int0j}
u_M(x,t)+u^i(x,t)=0\;\mbox{ for }\; (x,t)\in\Gamma_j\times (-\infty,T(M)]\;\mbox{ and }\; 1\leq j\leq N.
\end{equation}
Noting that
\begin{equation}\label{mult-scatt-int_Lp1}
u_M(x,t)=  \sum_{k=1}^N v_{k,M}(x,t) + \sum_{m=1}^{M-1}\sum_{k=1}^N v_{k,m}(x,t),
\end{equation}
in what follows we establish~\eqref{bound-int0j} for a given $j$
($1\leq j\leq N$) by separately considering~\eqref{mult-scatt-int_Lp1}
in the cases $(x,t)\in\Gamma^\mathrm{tov}_j\times (-\infty,T(M)]$ and
$(x,t)\in\left(\Gamma_j\cap \Gamma_{j\pm 1}\right)\times (-\infty,T(M)]$.

In order to verify~\eqref{bound-int0j} for a given $j$ and for
$(x,t)\in\Gamma^\mathrm{tov}_j\times (-\infty,T(M)]$ we first note
that since we have $g_{k,M}(x,t) = 0$ for
$(x,t)\in \Gamma_k\times (-\infty,T(M-1)$ (in view of~\eqref{A3})
and $g_{k,M}(x,t) = 0$ for $(x,t)\in \Gamma_k^0\times\mathbb{R}$
(in view of~\eqref{vanishing}), and in consideration
of~\eqref{bdry_int}, Lemma~\ref{localpro} tells us that, for
$k\ne j$, $v_{k,M}(x,t) = 0$ for
$(x,t)\in (\Gamma_j\backslash\Gamma_k) \times (-\infty,T(M)]$. Applying
this relation for $k=j\pm 1$ we conclude that
\begin{equation}\label{bound-int1.31}
  v_{j\pm 1,M}(x,t) = 0\; \mbox{ for }\;
  (x,t) \in\Gamma_j^{\mathrm{tov}}\times (-\infty, T(M)],
\end{equation}
while
applying it for other values of $k$ we obtain
\begin{equation}\label{bound-int1.41}
  v_{k,M}(x,t) = 0\quad \mbox{for}\quad
  k\not\in\{j-1,j,j+1\}\quad \mbox{and}\quad
  (x,t) \in\Gamma_j\times (-\infty, T(M)].
\end{equation}
In view of~\eqref{bound-int1.31} and~\eqref{bound-int1.41},
equation~\eqref{mult-scatt-int_Lp1} reduces to
\begin{equation}\label{mult-scatt-int_tov}
u_M(x,t)=   v_{j,M}(x,t) + \sum_{m=1}^{M-1}\sum_{k=1}^N v_{k,m}(x,t) \;\mbox{ for }\; (x,t) \in\Gamma_j^{\mathrm{tov}}\times (-\infty, T(M)],
\end{equation}
and it reduces to
\begin{equation}\label{mult-scatt-int_ov}
u_M(x,t)=   v_{j,M}(x,t) + v_{j\pm 1,M}(x,t) + \sum_{m=1}^{M-1}\sum_{k=1}^N v_{k,m}(x,t) \;\mbox{ for }\; (x,t)\in \Gamma_j\cap \Gamma_{j\pm 1}\times (-\infty,
T(M)].
\end{equation}
The desired relation~\eqref{bound-int0} now follows directly
from~\eqref{mult-scatt-int_tov} and~\eqref{mult-scatt-int_ov} in view
of~\eqref{A1} and~\eqref{A2}, and the proof is thus complete.
\end{proof}

\subsection{FTH-MS solver: interior problem}
\label{sec:3.2}

The procedure embodied in Definition~\ref{inductive_int},
Lemma~\ref{localrels} and Theorem~\ref{equivalence-int} can readily be
implemented, on the basis of the methods described in
Section~\ref{sec:2.1} and \cite{BY23}, as an effective methodology for
the solution of the problem~\eqref{waveeqn} with $E=D$ (interior
problem); the resulting interior FTH-MS algorithm is described in what
follows. Using the notations introduced in Section~\ref{sec:2.1}, for
$m\in\N$, $j=1,\cdots,N$ and $q\in\mathcal{Q}$ we call
$G_{j,m,q}(x,\omega)= \mathbb{F}_{q,slow}(g_{j,m})(x,\omega)$ the slow
Fourier-transform of the boundary data $g_{j,m}$, and we let
$V_{j,m,q}$ denote the solution to the Helmholtz equation problem in
$\Gamma_j^\mathrm{c}$ with Dirichlet boundary data $G_{j,m,q}$ on $\Gamma_j$. The solution $V_{j,m,q}$ is expressed in the form
\be
\label{FDsolk-interior}
V_{j,m,q}(x,\omega)={\mathcal S}_{\Gamma_j}
[\Psi_{j,m,q}](x,\omega),\quad (x,\omega)\in \Gamma_j^c\times\R, \en
(equation~\eqref{SL_rep} but with
$(x,\omega)\in \Gamma_j^\mathrm{c}\times\R$), where,
using~\eqref{SL_bnd_op},
$\Psi_{j,m,q} \in \widetilde{H}^{-1/2}(\Gamma_j)$ denotes the unique
solution of the frequency-domain BIE \be
\label{FDBIEk-interior}
V_{\Gamma_j} [\Psi_{j,m,q}](x,\omega)= G_{j,m,q}(x,\omega),\quad
(x,\omega)\in{\Gamma_j}\times \R, \en see
  e.g.~\cite{SW84}. The solution $v_{j,m}$ is then given by \be
\label{TDsol-interior}
v_{j,m}(x,t)= \mathbb{F}^{-1}\left(\sum_{q\in\mathcal{Q}} e^{i\omega
    s_q}V_{j,m,q}\right)(x,t)= \sum_{q\in\mathcal{Q}}\mathbb{F}^{-1}
\left(V_{j,m,q}\right)(x,t-s_q),\quad (x,t)\in \Gamma_j^\mathrm{c}\times\R.
\en Using these results, notations and conventions, the proposed
FTH-MS method for the interior problem~\eqref{waveeqn} is presented in
what follows (Algorithm~\ref{alg-interior}). Here and throughout this paper, the symbol sequence ``$\mathrel{+}=$'' denotes the binary assignment operator commonly used in C++ and other
programming languages, which adds the value of the right operand to
the value of the left operand, and then assigns the result back to the left operand.

\begin{algorithm}
  \caption{FTH-MS solver for the problem (\ref{waveeqn}) in interior
    domains}
\label{alg-interior}
\begin{algorithmic}[1]
\STATE Initialize $u_M(x,t)=0, (x,t)\in E\times [0,s_Q]$.
\STATE Do $m=1,2,\cdots,M$
\STATE \ \ \ \ Do $j=1,2,\cdots,N$
\STATE \ \ \ \ \ \ \ \ Evaluate the boundary data $g_{j,m}(x,t), (x,t)\in\Gamma_j\times[0,s_Q+H]$ via relations (\ref{criterion_int1})-(\ref{criterion_int2}).
\STATE \ \ \ \ \ \ \ \ Set $v_{j,m}(x,t)=0, (x,t)\in \ov{E}\times [0,s_Q+H]$.
\STATE \ \ \ \ \ \ \ \ Do $q=1,2,\cdots,Q$
\STATE \ \ \ \ \ \ \ \ \ \ \ \ \label{seven} Evaluate the boundary data $G_{j,m,q}(x,\omega)= \mathbb{F}_{q,slow}(g_{j,m})(x,\omega), (x,\omega)\in\Gamma_j\times \R$.
\STATE \ \ \ \ \ \ \ \ \ \ \ \ Solve the integral equations (\ref{FDBIEk-interior}) with solution $\Psi_{j,m,q}$.
\STATE \ \ \ \ \ \ \ \ \ \ \ \ \label{nine} Compute $V_{j,m,q}(x,\omega), (x,\omega)\in \Gamma_j^c\times \R$ using equation (\ref{FDsolk-interior}).
\STATE \ \ \ \ \ \ \ \ \ \ \ \ \label{ten} Compute $v_{j,m}(x,t)\mathrel{+}= \mathbb{F}^{-1}(V_{j,m,q})(x,t-s_q), (x,t)\in \ov{E}\times [0,s_Q+H]$.
\STATE \ \ \ \ \ \ \ \ End Do
\STATE \ \ \ \ \ \ \ \ Compute $u_M(x,t)\mathrel{+}=v_{j,m}(x,t), (x,t)\in E\times [0,s_Q]$ per equation~\eqref{mult-scatt-int} (see Remark~\ref{infty_and_trapping_i}).
\STATE \ \ \ \ End Do
\STATE End Do
\end{algorithmic}
\end{algorithm}

\noindent

\begin{remark}\label{infty_and_trapping}
  The patches $\Gamma_j$ should be specifically chosen to be
  nontrapping, since, as detailed in
  Remarks~\ref{infty_and_trapping_i}
  and~\ref{infty_and_trapping_ii}, the use of such type of patches
  significantly impacts the efficiency of the algorithm.
\end{remark}
\begin{remark}\label{infty_and_trapping_i}
  Although Algorithm~\ref{alg-interior}, per its line 12, evaluates
  $u_M(x,t)$ as a sum of an increasing number of terms as $t$ and $M$
  grow, the selection of non-trapping patches $\Gamma_j$ ensures the
  rapid decay of the solutions $v_{j,m}(x,t)$ as $t$ increases---at
  least for the cases considered in this paper, where the 2D incident
  signal consists entirely of frequency components at nonzero
  frequencies. In such cases, integration by parts arguments similar
  to the 3D treatment~\cite{AB22} can be used to establish fast
  solution decay. As a result of this rapid decay, the total number of
  active terms $v_{j,m}(x,t)$ in the algorithm's line 12 (i.e., those
  exceeding a given error tolerance $\varepsilon^\mathrm{tol}>0$) remains asymptotically
  close to a fixed integer over time. Consequently, the computational
  cost per unit time remains uniformly bounded as time grows.  In
  cases for which the incident signal includes frequency content at
  zero frequency the frequency-asymptotic methods presented
  in~\cite{ABL25} could be applied in the present context to ensure,
  once again, uniformly bounded computing times as time grows.
  \end{remark}  
\begin{remark}\label{infty_and_trapping_ii}
  The use of non-trapping patches $\Gamma_j$ offers a significant
  computational advantage when iterative linear algebra algorithms are
  employed to obtain the required frequency-domain
  solutions. Specifically, the number of iterations required for an
  iterative algorithm to solve the integral
  equations~(\ref{FDBIEk-interior}) on non-trapping patches is
  considerably lower than that required for solving the corresponding
  equations on trapping curves; see Figure~\ref{Example5.3}.
\end{remark}

\subsection{FTH-MS solver: exterior problem}
\label{sec:3.3}

The methods introduced in Sections~\ref{sec:3.1} and~\ref{sec:3.2},
including Theorem~\ref{equivalence-int}, can be extended with minimal
modifications to apply to the wave equation~\eqref{waveeqn} in an
exterior domain $E$, including scenarios where the boundary
$\Gamma = \partial E$ consists of a combination of open and closed
curves. Using this strategy, the implementation of the exterior FTH-MS
solver involves decomposing each connected component of the curve
boundary $\Gamma$ into a union of overlapping open arcs, as described
in Section~\ref{sec:3.2}. This approach is particularly beneficial
when the curves induce trapping, as discussed in
Remark~\ref{infty_and_trapping}. As part of this treatment, the
necessary frequency-domain solutions are obtained through
equations~\eqref{FDsolk-interior} and~\eqref{FDBIEk-interior}.

If preferable, however, boundary-connected components that are closed
curves can be treated as a single patch $\Gamma_j$, and in that case
the necessary frequency-domain solutions $V_{j,m,q}$
(cf. Section~\ref{sec:3.2}) are obtained in the form \be
\label{FDsolk-closed}
V_{j,m,q}(x,\omega)=\left({\mathcal D}_{\Gamma_j} -i\eta {\mathcal
    S}_{\Gamma_j}\right)[\widehat \Psi_{j,m,q}](x,\omega),\quad (x,\omega)\in
D_j^c\times\R. \en Here, defining $G_{j,m,q}$ as in
Section~\ref{sec:3.2}, $\Psi_{j,m,q} \in H^{1/2}(\Gamma_j)$ denotes
the unique solution of the frequency-domain CFIE \be
\label{FDBIEk-closed}
\left(\frac{1}{2}I+K_{\Gamma_j}-i\eta V_{\Gamma_j} \right)[\widehat
\Psi_{j,m,q}](x,\omega)= G_{j,m,q}(x,\omega),\quad
(x,\omega)\in{\Gamma_j}\times\R; \en cf. Section~\ref{sec:2.1}. The
FTH-MS algorithm then proceeds as in the interior domain case
considered in the previous section. In all numerical examples in this
paper that include closed curves as connected boundary components,
these curves are treated using the CFIE equation, as outlined above.

\section{Computational implementation}
\label{sec:4}

The numerical implementations of both the interior-domain
Algorithm~\ref{alg-interior} and the exterior-domain counterpart
outlined in Section~\ref{sec:3.3} require the evaluation of Fourier
transforms, layer potentials, and boundary integral operators. The
next two subsections summarize the strategies employed for these
computations.

\subsection{High-frequency Fourier transform methods}\label{sec:4.1}
The Fourier transformation procedures we use coincide with those
proposed in~\cite{ABL20}. They rely on the fact that if $g(t)$ and
$G(\omega)$ are smooth functions that decay superalgebraically---i.e.,
faster than any negative power of $t$ and $\omega$, respectively, as
these variables tend to $\pm\infty$---then the error in the truncated
Fourier-transform approximation
\begin{equation}
  \label{eq:ft_appr} g(t) \approx \frac{1}{2\pi} \int_{-W}^{W}
G(\omega)e^{-i\omega t} \, d\omega
\end{equation} of the exact transform~(\ref{backwardFT}), as well as
the error in the corresponding forward transform of $g(t)$ truncated
to the interval $[-T,T]$, both decay superalgebraically as $W \to
\infty$ and $T \to \infty$, respectively. Consequently, the needed
Fourier transform~\eqref{eq:ft_appr} and related time-truncated
inverse Fourier transform can be evaluated by means of the
high-frequency quadrature methods~\cite[Secs. 3 and 4]{BY23},
respectively, with  errors that decay superalgebraically fast as both the
integration intervals and the number of quadrature points are
increased.

In the 2D context, in particular, owing to the fact that the 2D
frequency-domain solutions generally contains integrable
$\mathcal{O}(\log|\omega|)$ singularities, the evaluation of the
corresponding Fourier transforms with high-order accuracy requires
special considerations. In detail, in the 2D case the approximate
inverse Fourier transform integrals (\ref{eq:ft_appr}) are expressed
in the form
\begin{eqnarray}
\label{backwardFT1} g(t)\approx
\frac{1}{2\pi}\left(\int_{-W}^{-w_c}+\int_{-w_c}^{w_c}+\int_{w_c}^{W}\right)
G(\omega)e^{-i\omega t}d\omega,
\end{eqnarray} which may be further decomposed into 1)~Integrals of the form
\begin{eqnarray}
\label{int1} I_a^b[G](t)=\int_{a}^{b} G(\omega)e^{-i\omega t}d\omega,
\end{eqnarray} where $G$ is a smooth non-periodic function; and, 2)
Half-interval integrals
\begin{eqnarray}
\label{int2} I_0^{w_c}[G](t)=\int_{0}^{w_c} G(\omega)e^{-i\omega
t}d\omega\quad\mbox{and}\quad I_{-w_c}^0[G](t)=\int_{-w_c}^0
G(\omega)e^{-i\omega t}d\omega,
\end{eqnarray} where $G(\omega)$ contains a logarithmic singularity at
$\omega=0$. The regular integral $I_a^b[G](t)$ can be evaluated by
expressing the non-periodic function $G(\omega), \omega\in [w_c,W]$ as
a Fourier-continuation trigonometric polynomial~\cite{BL10,AB16,ABL20} and
subsequent explicit term-wise integration. To treat the integral
$I_0^{w_c}[G](t)$, which involves a logarithmic singularity at
$\omega=0$, at fixed cost for arbitrarily large times $t$, a modified
``Filon-Clenshaw-Curtis'' high-order quadrature approach was developed
in~\cite{ABL20} by introducing a graded set \ben
\left\{\mu_j=w_c\left(\frac{j}{P}\right)^p, j=1,\cdots,P\right\}, \enn
of points in the interval $(0,w_c)$ and associated integration
sub-intervals $(\mu_j,\mu_{j+1}), j=1,\cdots,P$, on each one of which
the integral $I_{\mu_j}^{\mu_{j+1}}[G](t)$ is produced by means of the
Clenshaw-Curtis quadrature rule. The Fourier transform \ben
\mathbb{F}_{q,slow}(g)(\omega):=\int_{-H}^H
g(t+s_q)\sqcap_q(t)e^{i\omega t}dt \enn for smooth boundary-values
function $g$, finally, can be obtained in a manner analogous to that
used for the evaluation of $I_{w_c}^W[G](t)$ except that, on account
of the use of compactly supported windowing function $\sqcap_q$, the
integrand may be viewed as a periodic function and, thus, a regular
Fourier expansion of the function $g(t+s_q)\sqcap_q(t), t\in[-H,H]$
may be used, instead of a the Fourier continuation used for
$I_{w_c}^W[G](t)$.

\subsection{Single- and double-layer potentials on closed curves and
open arcs
  \label{sec:4.2}} The numerical implementation of the FTH-MS
algorithm also requires evaluating and inverting specific integral
operators, as well as computing certain layer potentials, both of
which are associated with the curves $\Gamma_j$, for
$j=1,\dots,N$. These curves may be either closed or open; in the
following, we discuss the corresponding algorithms for each case.  For
each closed curve $\Gamma_j$, the relevant integral operators and
layer potentials include $\mathcal{S}_{\Gamma_j}$ and
$\mathcal{D}_{\Gamma_j}$, as well as $V_{\Gamma_j}$ and
$K_{\Gamma_j}$, which are given in equations (\ref{FDsolk-closed}) and
(\ref{FDBIEk-closed}), respectively.  For each open arc $\Gamma_j$, in
turn, the required integral operators and layer potentials include
$\mathcal{S}_{\Gamma_j}$ and $V_{\Gamma_j}$, as given in equations
(\ref{FDsolk-interior}) and (\ref{FDBIEk-interior}), respectively. All
of these operators can be expressed in the form
\begin{eqnarray}
\label{singular2} \mathcal{H}_j(x,\omega)&=&\int_{\Gamma_j}
\Psi_\omega(x,y)\psi_j(y,\omega)ds_y, \quad
x\in\Gamma_j\;\;\mbox{or}\;\; x\in\mathbb{R}^2\backslash\Gamma_j,
\end{eqnarray} where $\Psi_\omega$ equals
either $\Phi_\omega$ or $\pa_{\nu_y}\Phi_\omega$
and where $\psi_j(y,\omega)$ represents the integral density function.

In the implementations used in this paper, both closed-curve and
open-arc integrals of the form~\eqref{singular2} are evaluated
numerically using spectral representations of the density function
$\psi_j$. For closed curves, these integrals are computed with high
accuracy using the well-established Nystr\"om method~\cite{CK98}. The
treatment of open arcs, however, requires more detailed considerations;
for clarity, those details are provided separately in
Section~\ref{FTH-op-arc}. Additionally, as part of the FTH-MS method,
the singular integrals~\eqref{singular2} must be evaluated at a
sufficiently large number of frequency discretization points $\omega$
within the interval $[-W,W]$. To reduce the computational cost, an
appropriate decomposition of the singular kernel is employed, as
described in \cite[Section 3.2]{BY23}, and we refer to that source for
further details in this regard.

\subsection{Frequency-domain FTH-MS open-arc problems: additional
  singularities\label{FTH-op-arc}} The proposed FTH-MS algorithm
evaluates the integrals in~\eqref{singular2} using open-arc techniques
introduced in~\cite{BL12} and~\cite{BY21}, but in a modified form that
accounts for the unique singularity structure in the FTH-MS open-arc
density functions $\psi_j$. While the densities $\psi_j$ exhibit endpoint singularities analogous to those
observed in classical open-arc problems, they also feature additional
``induced'' singularities due to the overlaps between $\Gamma_j$ and
$\Gamma_{j\pm 1}$, as described in what follows.  (As discussed in
Section~\ref{sec:1}, the previous contribution~\cite{BY23} employs a
``smooth extension along normals'' technique that, while effectively
eliminating all induced singularities through geometric modifications,
is somewhat cumbersome, is not applicable to open-arc scatterers, and
does not generalize effectively to the three-dimensional context.)
Before considering the special FTH-MS open-arc singularities, however,
we provide some background on density singularities and numerical
methods in the context of related problems of scattering by a single
smooth open arc $\widetilde{\Gamma}$ under illumination by a smooth
incident field, with Dirichlet boundary conditions, as considered
in~\cite{BL12} and~\cite{BY21}.

\begin{figure}[htb] \centering
\includegraphics[scale=0.2]{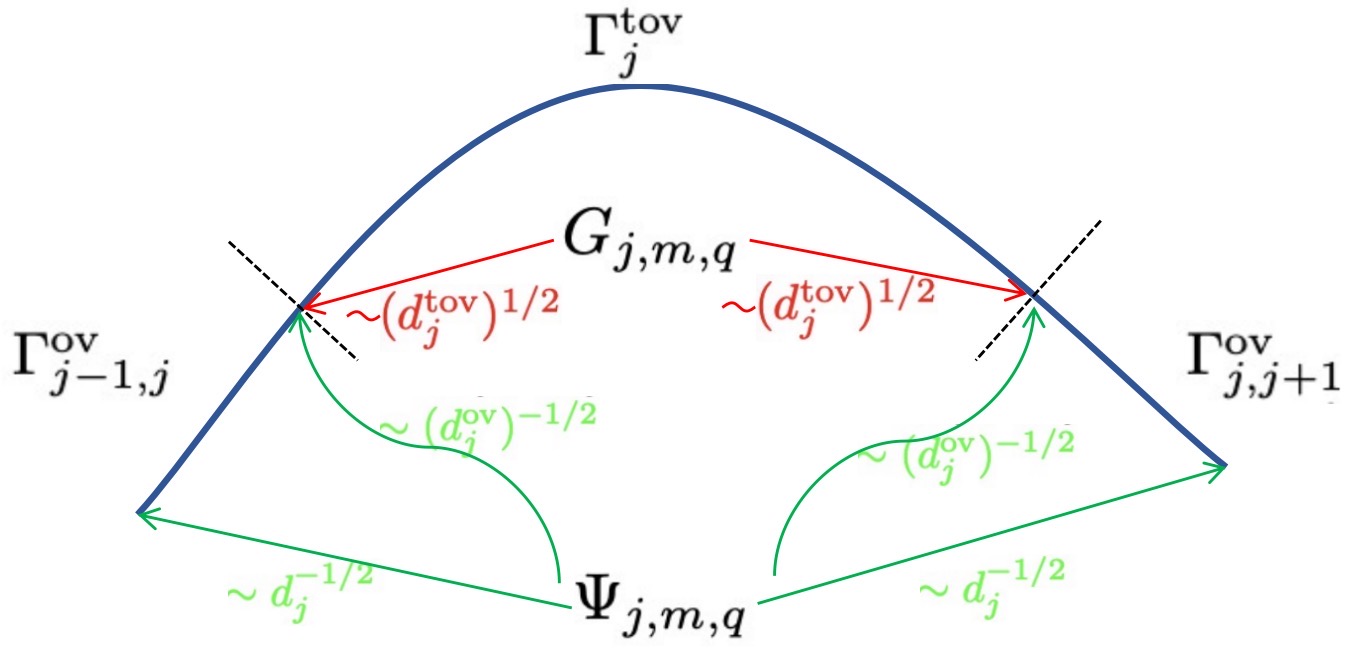}
\caption{Singularities of the functions $G_{j,m,q}$ and $\Psi_{j,m,q}$
on the arc $\Gamma_j
=\Gamma_{j-1,j}^\mathrm{ov}\cup\Gamma_j^\mathrm{tov}\cup\Gamma_{j,j+1}^\mathrm{ov}$.\label{FTH-MS_sings}}
\label{Singularity}
\end{figure}

Considering such ``smooth illumination'' Dirichlet problems, the
approach~\cite{BL12} addresses the singular nature of the associated
density $\widetilde{\psi}$ at the endpoints of $\widetilde{\Gamma}$ by
expressing the density in the form
\begin{equation}
\label{eq:singularity} \widetilde{\psi} = \frac{\alpha}{w},
\end{equation} where $\alpha$ is a smooth density function, and where
$w$ is a fixed function that, near each endpoint of
$\widetilde{\Gamma}$, equals the product of a smooth non-vanishing
function and the square-root of the distance to the endpoint.  This
formulation captures the singularity in the density $\widetilde{\psi}$
while ensuring that the density function $\alpha$ remains smooth near
the endpoints.  Reference~\cite{BL12} accurately computes the
necessary single-layer integrals by means of a spectral discretization
of the density $\alpha$ over the entire arc. In this paper we leverage
the 2D Chebyshev-based rectangular-polar spectral discretization
method~\cite{BY21,BY23} instead. This method not only explicitly
accounts for the endpoint singularity~\eqref{eq:singularity}, but it
also enables the arc to be partitioned into a prescribed number of
non-overlapping ``integration patches.'' This partitioning greatly
enhances the generality and flexibility of the overall methodology,
providing, in particular, significant advantages in the context of the
FTH-MS algorithm.

As mentioned earlier and as detailed in what follows, additional
singularities emerge in the open-arc problems associated with the
FTH-MS method. For clarity we present the description within the
framework of the interior problem considered in Section~\ref{sec:3.2},
and using the notation introduced in that section. (The corresponding concepts and
algorithms required for the  exterior problem presented
in Section~\ref{sec:3.3} are entirely analogous.)  Thus, using the
notations in Section~\ref{sec:3.2} we note that, as a result of the
overlapping-arc decompositions used as part of the FTH-MS framework,
the endpoint singularities of the arc $\Gamma_j$ introduce
corresponding singularities in the arcs $\Gamma_{j\pm 1}$ since, by
construction, the endpoints of $\Gamma_j$ are contained within
$\Gamma_{j-1}\cup \Gamma_{j+1}$.

In detail, we note that, per Remark~\ref{remarkg}, the boundary
functions $g_{j,m}$ with $m>1$, and, hence, the functions $G_{j,m,q}$
with $m>1$, are continuous and vanish at the endpoints of
$\Gamma_j^{\mathrm{tov}}$. However, it can be checked that the
singularities of the densities $\psi_{j\pm 1}$ at the endpoints of
$\Gamma_{j\pm 1}$ induce corresponding $(d_{j}^{\mathrm{tov}})^{1/2}$
singularities in the boundary-data functions $G_{j,m,q}$ near the
endpoints of $\Gamma_{j}^{\mathrm{tov}}$, where $d_{j}^{\mathrm{tov}}$
denotes the distance from the point $x\in\Gamma_j^{\mathrm{tov}}$ to
the endpoints of $\Gamma_j^{\mathrm{tov}}$ (cf.
Figure~\ref{FTH-MS_sings}). Preliminary analysis indicates that, as
expected, the edge singularity of the boundary data function
$G_{j,m,q}$ induces additional singularities of the form
$(d_j^{\mathrm{ov}})^{-1/2}$  in the solution $\Psi_{j,m,q}$
of~(\ref{FDBIEk-interior}) as $x\in \Gamma_{j-1,j}^{\mathrm{ov}}\cup
\Gamma_{j,j+1}^{\mathrm{ov}}$ approaches the endpoints of
$\Gamma_j^{\mathrm{tov}}$, where $d_j^{\mathrm{ov}}$ denotes the
distance from $x\in \Gamma_{j-1,j}^{\mathrm{ov}}\cup
\Gamma_{j,j+1}^{\mathrm{ov}}$ to the endpoints of
$\Gamma_j^{\mathrm{tov}}$. A sequence of singularities is thus set up,
one density singularity resulting in a field boundary singularity
which in turn induces a subsequent density singularity, resulting in a
sequence of singularities given by various powers of the distances to
the endpoints of $\Gamma_j^{\mathrm{tov}}$. 

These theoretical considerations together with related numerical
experiments presented below suggest that all endpoint singularities
encountered can be expressed in the form
\begin{equation}
\label{eq:sqrt_sing} \frac{f\left(d_p^{1/2}(x)\right)}{d_p^{1/2}(x)}
\quad \text{where} \quad f\quad \text{is an \emph{infinitely
differentiable function}}
\end{equation} in a neighborhood of each endpoint $p$, and where
$d_p(x)$ denotes the distance from $x$ to $p$. A singularity of this
precise form is rigorously established in~\cite{akhm_br} for the
density at a Dirichlet-Neumann junction point $p$; however, a proof
for the present setting remains an open question and is deferred to
future work.

A numerical method designed to resolve the singular behavior just described is
presented below in this section. The approach relies on a class of
cosine-type changes of variables (CoV) introduced in what
follows---which were originally put forth in~\cite[Equation
(4.12)]{BY20} to eliminate integrand singularities of a related but
different form. These CoVs require that the non-overlapping partition
$\Gamma_{jk}$ of each arc $\Gamma_j$ ($k=1,2,\dots,P_j$), as defined
by the rectangular-polar discretization method~\cite{BY21,BY23}, be
constructed in such a way that none of the endpoints of $\Gamma_j$ or
its truncated-overlap counterpart $\Gamma_j^{\mathrm{tov}}$ lie in the
interior of any of the rectangular-polar patches
$\Gamma_{jk}$. Further the CoVs defined below assume that the patch
$\Gamma_{jk}$ is parameterized by a vector function
$x = x(\theta) \in \Gamma_{jk}$ with $\theta \in [-1, 1]$.  In view of
the discussion above and the results of this and related numerical
tests, these changes variables are made an integral part of the
proposed numerical implementation, and, in particular, are used in all
of the numerical examples presented in Section~\ref{sec:5}.

Depending on the location of the singularities of the function
$\Psi_{j,m,q}$ within each rectangular-polar patch $\Gamma_{jk}$ that
contains such singularities, the cosine-type change of variable
$\theta=\theta(s)$ ($-1\leq s\leq 1$) for the patch $\Gamma_{jk}$ used
is given by \be
\label{cov} \theta(s)=
\begin{cases} s, & \mbox{If no singularities exist on
$\Gamma_{jk}$,}\cr \cos(\frac{\pi}{2}(1-s)), & \mbox{If singularities
exist at both endpoints of $\Gamma_{jk}$,} \cr
1-2\cos(\frac{\pi}{4}(1+s)), & \mbox{If a singularity exists only at
the $\theta=-1$ endpoint}\cr 2\cos(\frac{\pi}{4}(1-s))-1, & \mbox{If
singularity exists only at the $\theta=+1$ endpoint}.
\end{cases} \en
Using these change of variables we introduce the new
unknown
\begin{equation}\label{eq:psi-tilde}
  \widetilde \Psi_{j,m,q}(s) = \Psi_{j,m,q}(\theta(s))|\theta'(s)|
\end{equation}
instead of $\Psi_{j,m,q}$ in~\eqref{FDBIEk-interior} which, in
particular, incorporates the Jacobian $|\theta'(s)|$.

Relying on this CoV and associated density unknown
$\widetilde\Psi_{j,m,q}(s)$, the numerical illustration that follows
offers compelling evidence in support of the preceding conjecture
concerning the form~\eqref{eq:sqrt_sing} of the singularities
observed. This numerical example proceeds by first displaying
singularities in the numerically-obtained integral densities
associated with a simple multi-arc scattering problem, and by then
showing that the density $\widetilde \Psi_{j,m,q}(s)$ that results
upon use of the CoV~\eqref{cov} is a smooth function,
at least up to the orders of differentiation considered in the
example. This clearly illustrates the assumed
form~\eqref{eq:sqrt_sing} of the singularities of the density
$\Psi_{j,m,q}$, since, taking into account the integration Jacobian
$|\theta'(s)|$, the inverse transformations to~\eqref{cov} on a smooth
density $\widetilde\Psi_{j,m,q}(s)$ precisely induce the singularities
claimed on the density $\Psi_{j,m,q}$. The numerical method is
completed by utilizing the rectangular-polar method in the variable
$s$.

For our numerical illustration we consider a scattering problem in the
interior of the unit disc treated on the basis of $N=3$ boundary
open-arc patches $\Gamma_j$ ($j=1,2,3$), and we utilize the
associated integral equation (\ref{FDBIEk-interior}) on the arc
$\Gamma_{j}=\{x=(\cos\theta,\sin\theta)^\top\in\R^2:
\theta\in(-\frac{\pi}{6},\frac{5\pi}{6})\}$ with $j=1$. (This is a
particular case, with parameter values $\omega_0=2$ and $R=1$, of the
configuration called ``Geometry 1'' in Section~\ref{sec:5}.) In
particular for the $j=1$ patch we have,
$\Gamma_{j-1,j}^{\mathrm{ov}}=\{x=(\cos\theta,\sin\theta)^\top\in\R^2:
\theta\in(-\frac{\pi}{6},\frac{\pi}{6})\}$ and
$\Gamma_{j,j+1}^{\mathrm{ov}}=\{x=(\cos\theta,\sin\theta)^\top\in\R^2:
\theta\in(\frac{\pi}{2},\frac{5\pi}{6})\}$. The boundary data used
equals the function $G_{j,m,q}$ that results from use of the multiple
scattering algorithm (Algorithm~\ref{alg-interior}) for $m=5$ and
$q=Q=1$ which, in particular, contains a number of induced
singularities. Figure~\ref{fig:density}(a) displays the solution
$\Psi_{j,m,q}$ of equation~\eqref{FDBIEk-interior} along the portion
$\{-\frac{\pi}{6}\leq \theta\leq \frac{\pi}{3}\}$ of the $j=1$ patch
$\Gamma_j$; the singularities of $\Psi_{j,m,q}$ near
$\theta=-\frac{\pi}{6}$ and $\theta=\frac{\pi}{6}$ are clearly
observed. (The density solution behaves similarly in the portion
$\{\frac{\pi}{3}\leq \theta\leq \frac{5\pi}{6}|\}$.) Figure
\ref{fig:density}(b), in turn, demonstrates that the solution
$\widetilde \Psi_{j,m,q}$ is smooth, at least up to the orders of
differentiation considered, and thus provides the desired numerical
evidence of conjectured singularity structure of the solution $\Psi_{j,m,q}$.

\begin{figure} \centering
\begin{tabular}{cc}
\includegraphics[width=0.45\linewidth]{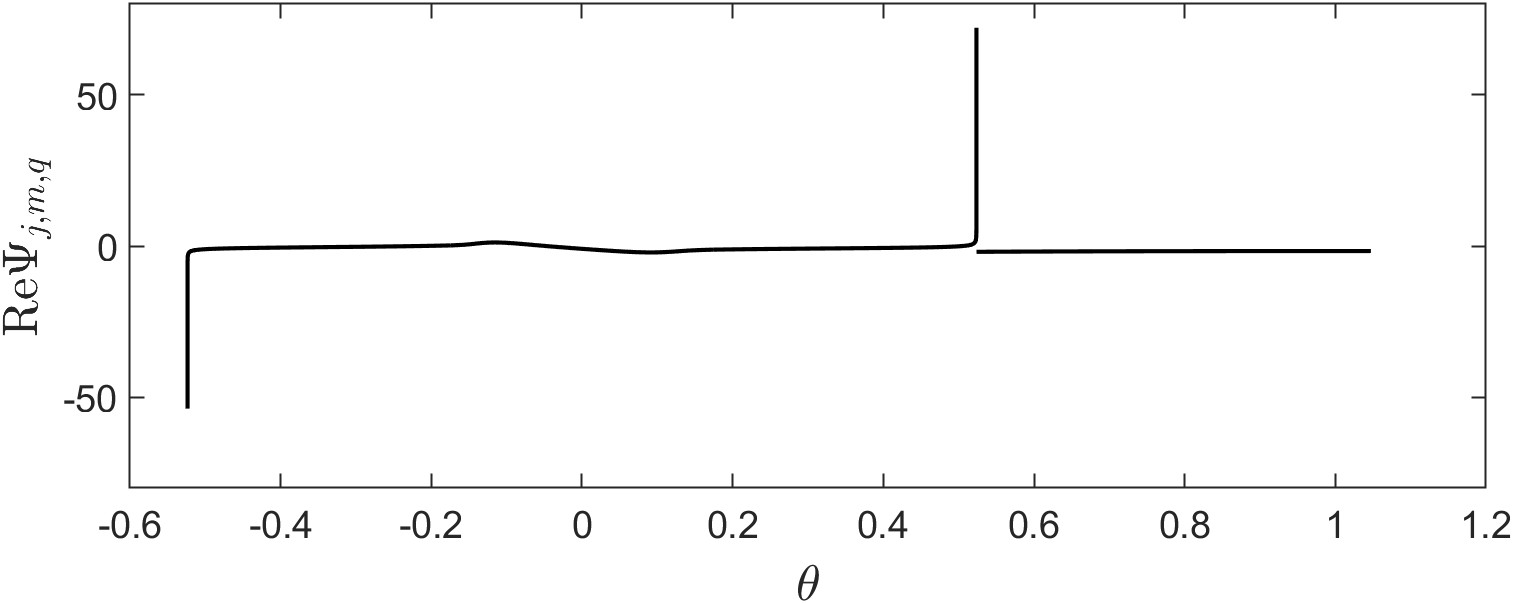} &
\includegraphics[width=0.45\linewidth]{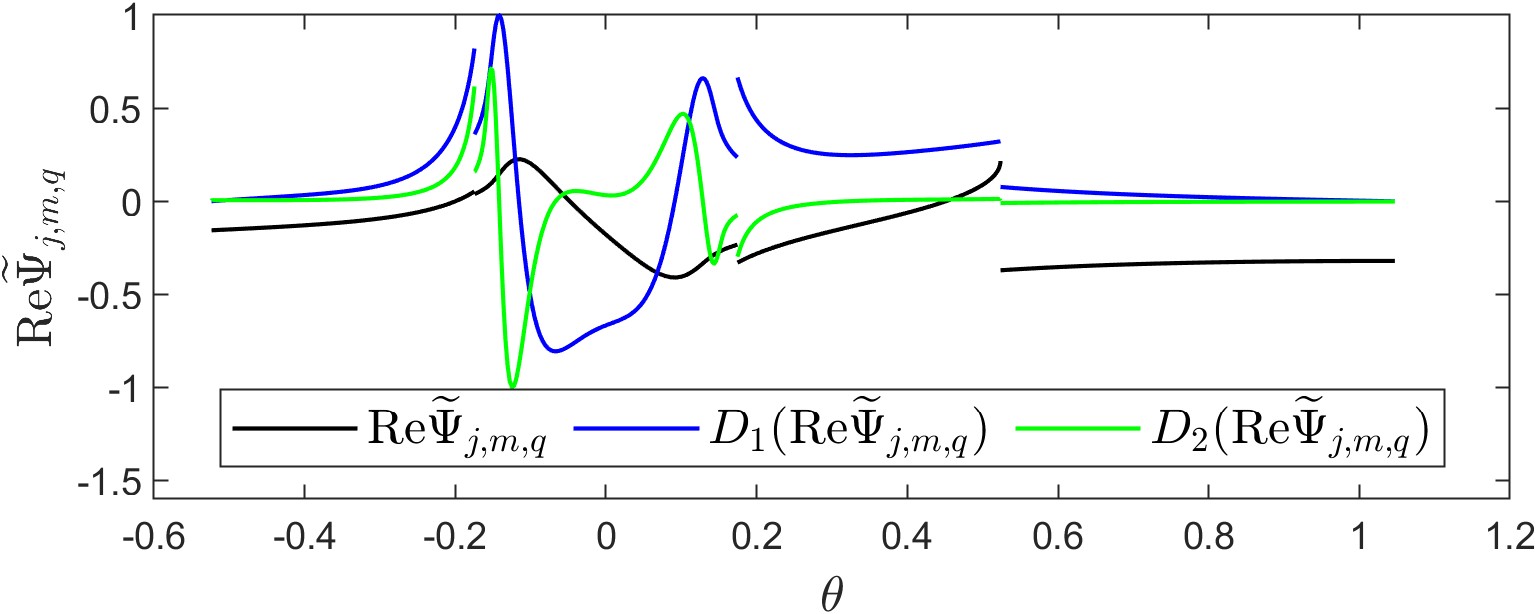}\\ (a)
$\Psi_{j,m,q}$ & (b) $\widetilde \Psi_{j,m,q}$
\end{tabular}
\caption{Comparison of the solutions $\Psi_{j,m,q}$ and $\widetilde
\Psi_{j,m,q}$ (normalized values).}
\label{fig:density}
\end{figure}

\section{Numerical results}
\label{sec:5}

This section presents a variety of numerical results, produced by
means of the FTH-MS integral equation solver introduced in this paper, which illustrate the efficiency
and accuracy of the proposed methods. With reference to
Section~\ref{sec:4.1}, we denote by
$\mathcal{F}=\{\omega_1,\cdots,\omega_J\}$ a set of frequencies used
for the Fourier transformation process, which includes equi-spaced
grids in the frequency intervals $[-W,-w_c]$ and $[w_c,W]$, as well as
Clenshaw-Curtis mesh points in the intervals
$(-\mu_{j+1},-\mu_j)$ and $(\mu_j,\mu_{j+1})$, $j=1,\cdots,P$. For
the necessary time-domain discretization, in turn, we use the mesh
$\mathcal{T}=\{t_n=n\Delta t\}_{n=1}^{N_T}$ of the time interval
$[0,s_Q+H]$ (cf. eq.~\eqref{eq:windowing}), where $\Delta t=(s_Q+H)/N_T$. In regard to the time windowing
strategy, in turn, we set $H=10$, $s_q=\frac{3}{2}(q-1)H$, $q\in\mathcal{Q}$, and
\begin{equation*} \sqcap_q(t)=\sqcap(t-s_q),\quad
\sqcap(s)=\begin{cases} \eta(s/H;1/2,1), & -H/2\le s\le H, \cr 1-
\eta(s/H+3/2;1/2,1), & -H< s<-H/2, \cr 0, & |s|\ge H,
\end{cases}
\end{equation*} 
where the function $\eta$ is defined in~\eqref{eta}. For simplicity, with exception of the long-time propagation problem illustrated in Figure~\ref{Example1.4}, all of the numerical examples presented in this paper utilize the value $Q=1$ (single incident-field window). Sufficiently fine frequency-independent meshes are
used on the boundaries $\Gamma_j$, $j=1,\cdots,N$ for all frequencies
considered, where, in accordance with Section~\ref{sec:4.2}, the
open-arc integral equations \eqref{FDBIEk-interior} are numerically
solved using the Chebyshev-based rectangular-polar solver described in
Section~\ref{FTH-op-arc}, while the closed-curve integral equations
\eqref{FDBIEk-closed} are solved using the Nystr\"om
method~\cite{CK98}.  The numerical errors $\varepsilon$ presented in
this section were calculated in accordance with the expression
\begin{equation}
  \label{eq:errors} \varepsilon= \max_{t\in[0,T]}|u^{s}_{\rm
num}-u^{s}_{\rm ref}|
\end{equation} where, with exception of the test cases considered in connection with 
Geometries 3 and 6, for which the closed-form solutions are known, the
reference solutions $u^{s}_{\rm ref}$ were obtained as numerical
solutions produced by means of sufficiently fine discretizations. All
of the numerical results presented in this paper were obtained on the
basis of \CC numerical implementations of the algorithms described in
the previous sections, parallelized using OpenMP, and run on a 8-core
Lenovo Laptop with an AMD Core processor R7-8845H.

Four different types of incident fields are considered in this section
(all of which are solutions of the wave equation), namely
\begin{enumerate}
\item A Gaussian-modulated point source $u_1^i(x,t)$ equal to the
Fourier transform of the function
\begin{eqnarray}
\label{pointsource}
U_1^i(x,\omega)=\frac{5i}{2}H_0^{(1)}(\omega|x-z_0|)
e^{-\frac{(\omega-\omega_0)^2}{\sigma^2}}e^{i\omega \tau_0}
\end{eqnarray} with respect to $\omega$, with $\sigma=2$ and $\tau_0=4$;
\item A Gaussian-modulated plane-wave incidence $u_2^i(x,t)$ equal to
the Fourier transform of the function
\begin{eqnarray}
\label{planewave2} U_2^i(x,\omega)=e^{i\omega x\cdot d^\textrm{inc}}
e^{-\frac{(\omega-\omega_0)^2}{\sigma^2}}e^{i\omega \tau_0}
\end{eqnarray} with respect to $\omega$, with $\sigma=\sqrt{2}$,
$\tau_0=6$, and with incident direction vector
$d^\textrm{inc}=(\cos\theta^\textrm{inc},
\sin\theta^\textrm{inc})^\top$;
  \item A pulse plane-wave incident field
\begin{eqnarray}
\label{planewave}
u_3^i(x,t)=-\sin(4\ell(x,t))e^{-1.6(\ell(x,t)-3)^2},\quad
\ell(x,t)=t-t_\textrm{lag}-x\cdot d^\textrm{inc}
\end{eqnarray} along the incident direction
$d^\textrm{inc}=(\cos\theta^\textrm{inc},
\sin\theta^\textrm{inc})^\top$ with $t_{lag}=2$; and
\item The multi-pulse incident field
  \begin{equation}
    \label{eq:multi} u_4^i=\sum_{j=0}^{4}u_3^i(x,t-10j).
  \end{equation}
\end{enumerate}

\noindent On the basis of these incident fields, a variety of
numerical examples are presented in what follows for five different
scattering geometries, as described below.  \vskip 0.2cm
\noindent {\bf Geometry 1.}  The domain $D$ in this case equals the
disc of radius $R>0$ centered at the origin. Equi-sized overlapping
patches are selected such that $\delta_{\mathrm{min}}\approx 0.35$ for
$R=1$, $N=3$ and $\delta_{\mathrm{min}}\approx 0.42$ for $R=2$,
$N=6$, where $\delta_{\mathrm{min}}$ is defined in~\eqref{dist}. Interior wave equation problems for two different time-domain
incident fields, namely, the plane wave $u^i_2(x,t)$ and the
multi-pulse incident field $u_4^i$, are considered for this
geometry. For the plane wave $u^i_2(x,t)$ incidence, the fixed
numerical frequency interval $\omega\in[5,25]$ is used for the necessary numerical Fourier transformation, and the parameter
values $\theta^{inc}=0$, $\omega_0=15$, $J=501$ and $\Delta t=0.01$
were set. For the multi-pulse incident field $u_4^i$, in turn, for which a
long time propagation problem is considered in what follows, the
parameter values $W=20$, $J=872$, $\Delta t=0.1$, and $M=40$ were
used. For both the plane-wave and multi-pulse incidence test cases the exact
solutions are given by $u(x,t)=-u_2^i(x,t)$ and $u(x,t)=-u_4^i(x,t)$
for $x\in D$, respectively. Solution values and numerical errors for
various values of $M$ at $x=(0.5,0)^\top$ as a function of $t$
resulting for the problems with plane wave $u^i_2(x,t)$ incidence are
displayed in Figures~\ref{Example1.2}--\ref{Example1.3}: clearly, high
accuracy and rapid convergence with respect to $M$ are observed. The
numerical and exact solutions at $x=(0.5,0)^\top$ for the case of
$u^i_4(x,t)$ incidence are displayed in Figure~\ref{Example1.4}; the errors in the numerical solution displayed are uniformly less than $10^{-6}$.


\begin{figure}[htbp] \centering
\begin{tabular}{cc} \includegraphics[scale=0.08]{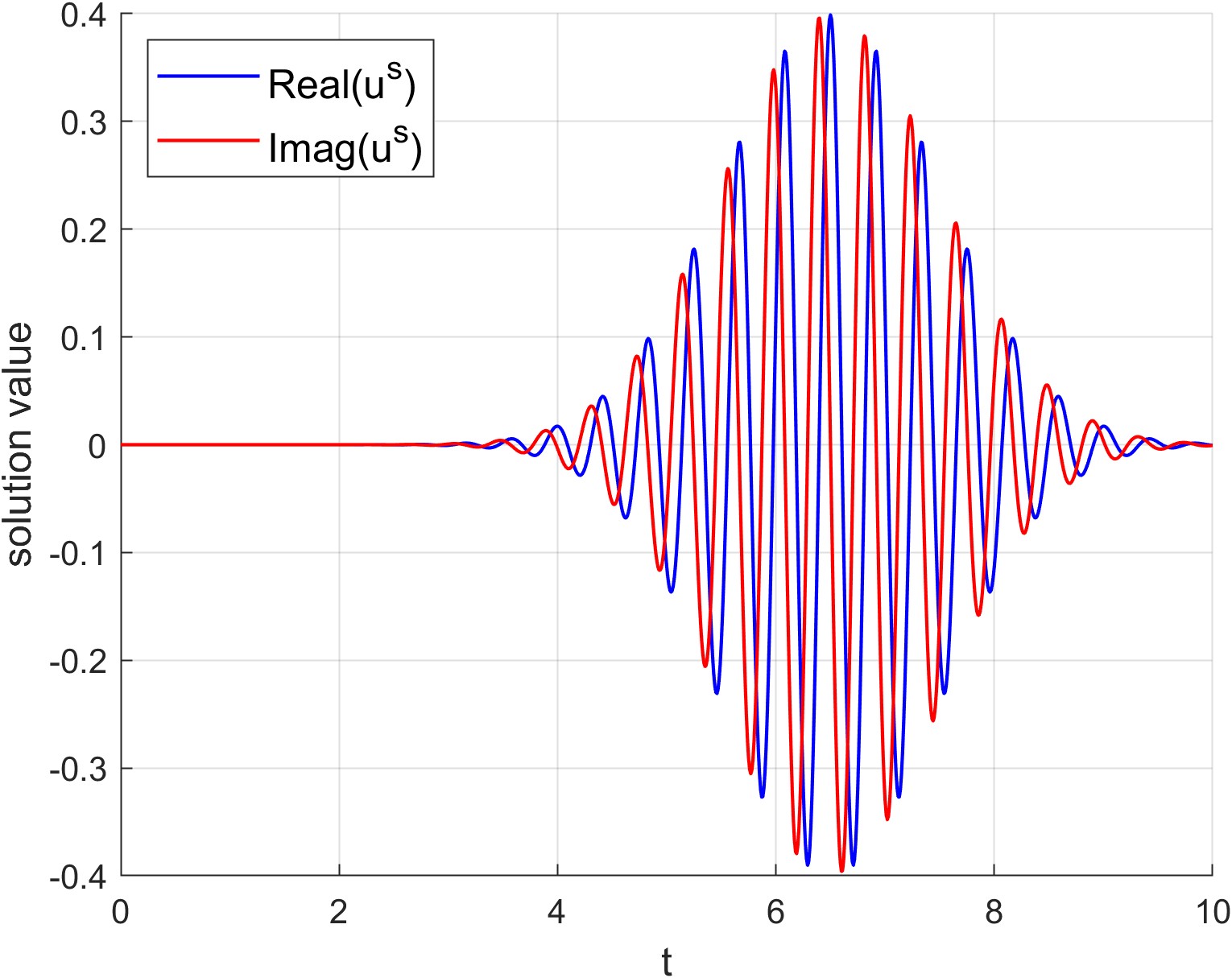}
& \includegraphics[scale=0.08]{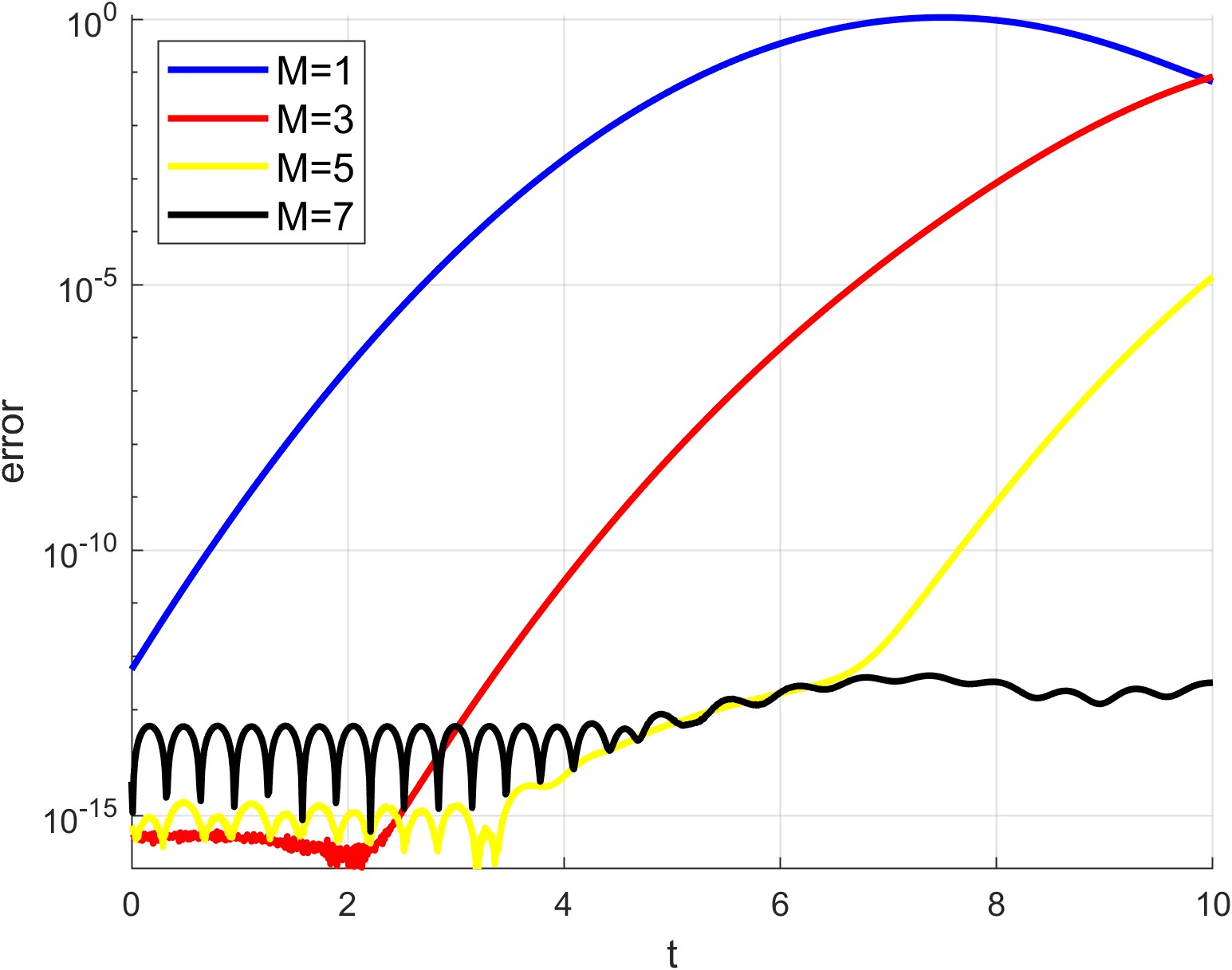} \\ (a) & (b)
\end{tabular}
\caption{Scattered field and errors obtained for the problem
considered in Geometry 1 with $R=1$ and $N=3$. (a) Real and imaginary
parts of scattered field at $x=(0.5,0)^\top$ resulting from the
incident field $u_2^i$. (b) Numerical errors as functions of time $t$
for various values of $M$.}
\label{Example1.2}
\end{figure}

\begin{figure}[htbp] \centering
\begin{tabular}{cc} \includegraphics[scale=0.08]{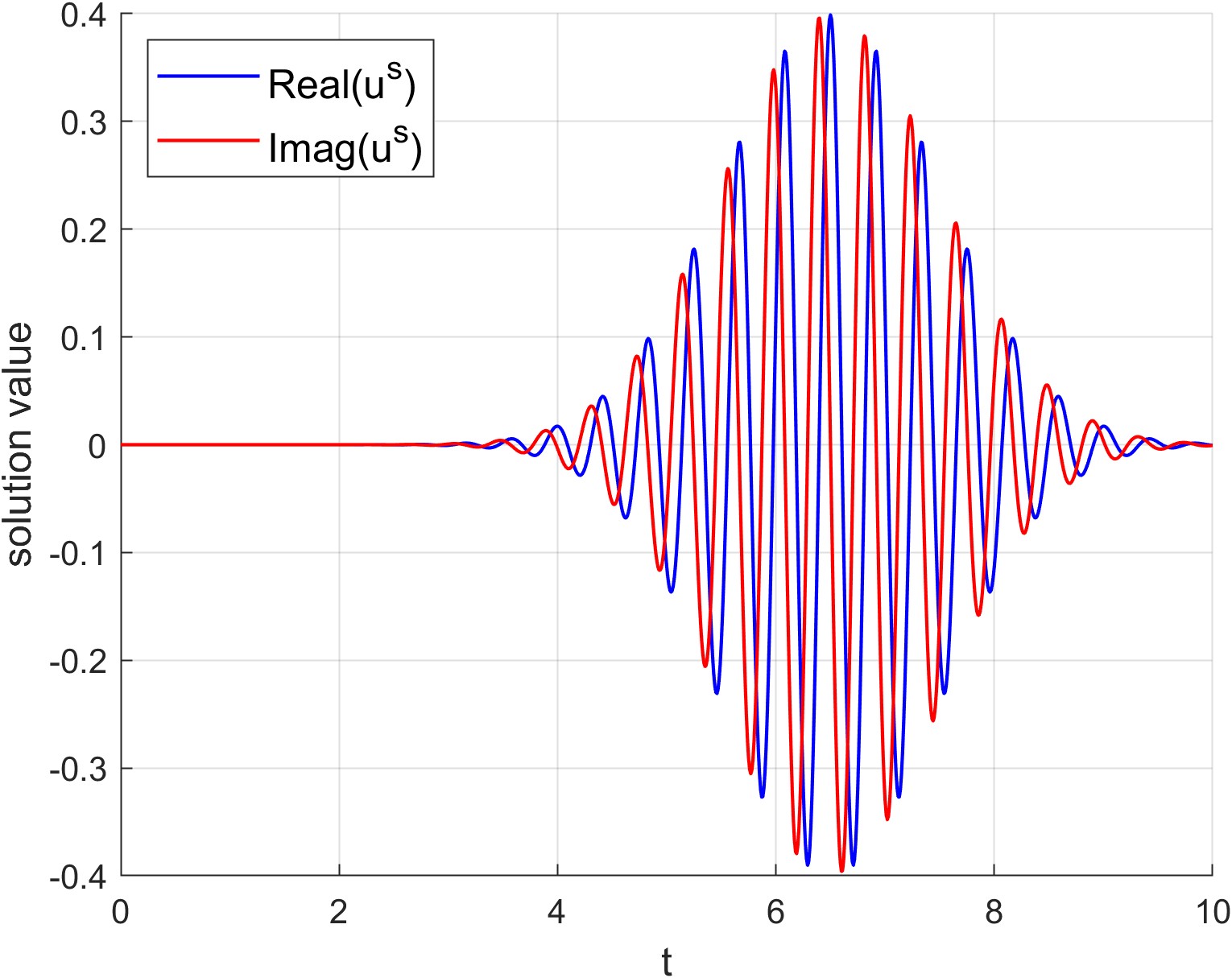}
& \includegraphics[scale=0.08]{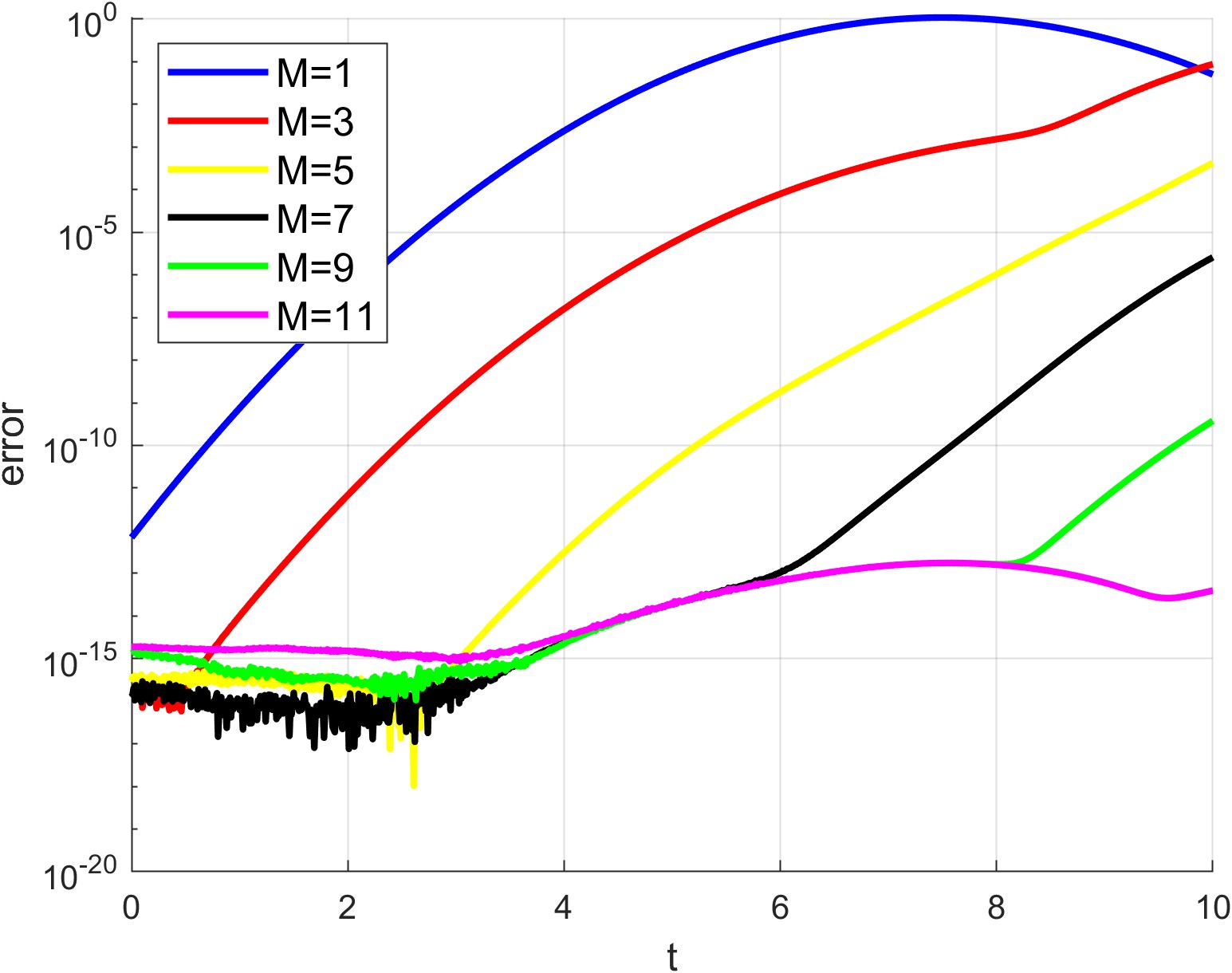} \\ (a) & (b)
\end{tabular}
\caption{Scattered field and errors obtained for the problem
considered in Example 1 with $R=2$ and $N=6$. (a) Real and imaginary
parts of scattered field at $x=(0.5,0)^\top$ resulting from the
incident field $u_2^i$. (b) Numerical errors as functions of time $t$
for various values of $M$.}
\label{Example1.3}
\end{figure}

\begin{figure}[htbp] \centering
\includegraphics[scale=0.12]{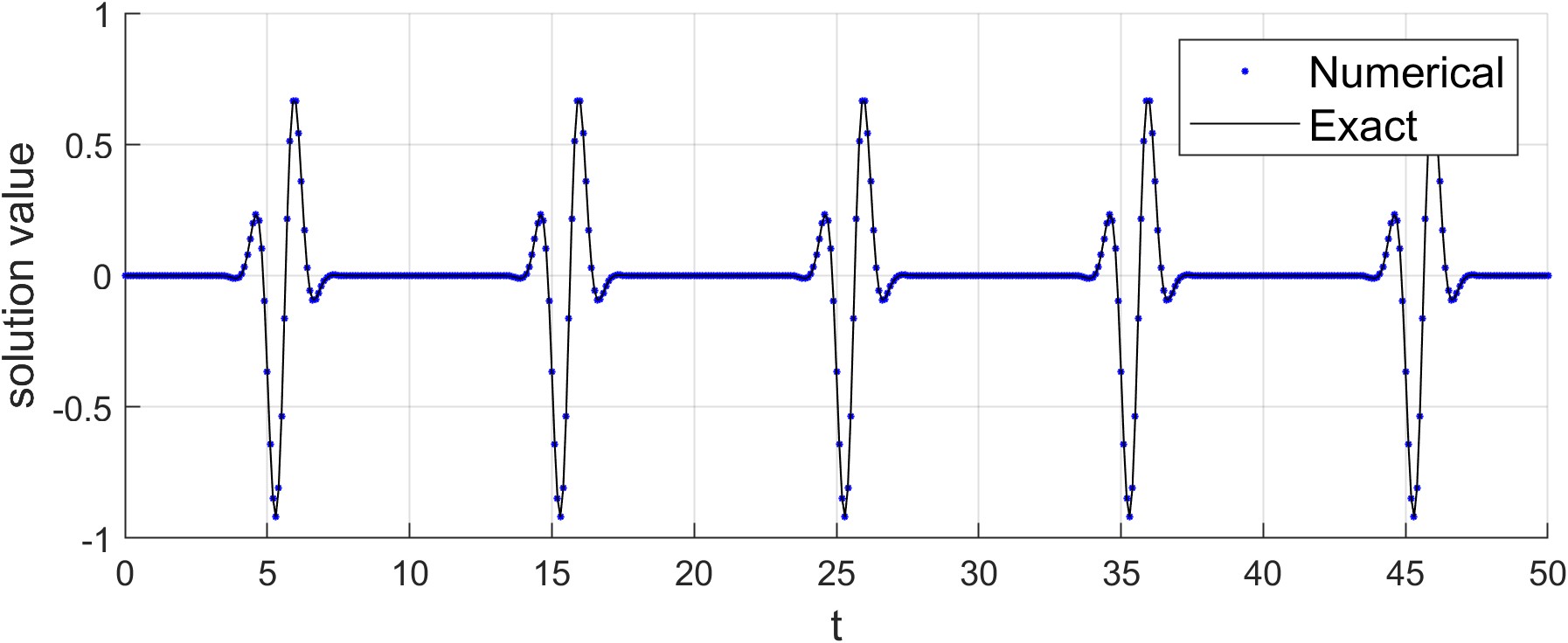}
\caption{Long-time propagation illustration. Real parts of the
numerical and exact solutions at $x=(0.5,0)^\top$ resulting from the
incident field $u_4^{i}$.}
\label{Example1.4}
\end{figure}

\vskip 0.2cm
\noindent {\bf Geometry 2.} Scattering problems in the interior of a
complex H-shaped domain centered at the origin are considered next,
utilizing  the point source incidence function $u_1^i$ with
$z_0=(0,0)$ and with two different frequencies: $\omega_0=15$ and
$\omega_0=50$. The Fourier-transforms utilized to produce the corresponding time-domain incident-fields (which are necessary to obtain the total time-domain fields $u^\mathrm{tot} = u^i +u$ displayed in Figure~\ref{Example2.1}) were evaluated by integrating in the frequency ranges $[5,25]$ and $[40,60]$ respectively---outside which the frequency-domain incident fields are vanishingly small. The parameter values $N=4$, $J=501, \Delta t=0.01$ and
$M=10$ were used. Figure~\ref{Example2.1} displays the total field $u^\mathrm{tot}$
within $D$ at various times.

\begin{figure}[htbp] \centering
\begin{tabular}{cccc} \includegraphics[scale=0.2]{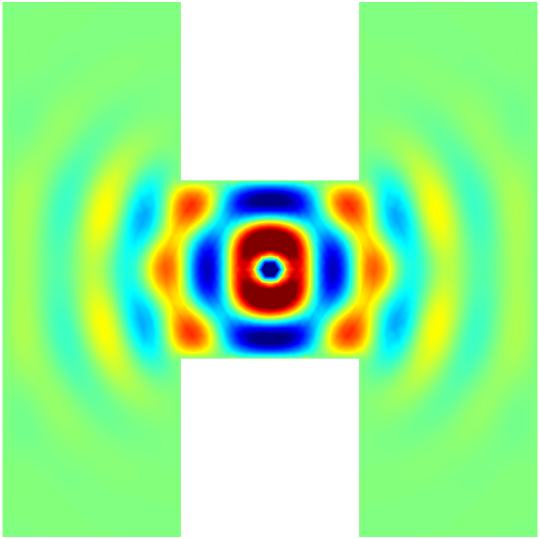}
& \includegraphics[scale=0.2]{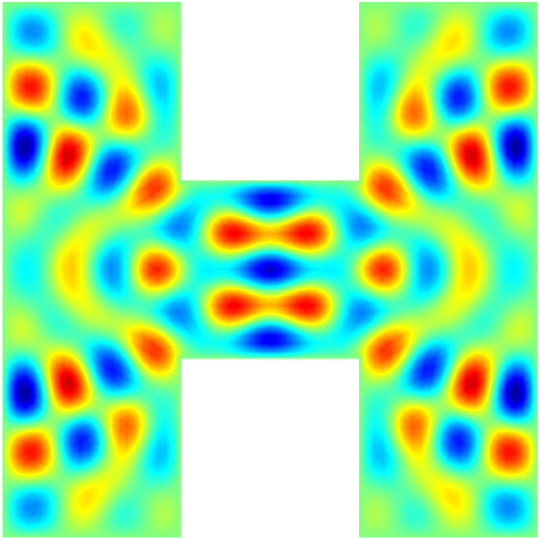} &
\includegraphics[scale=0.2]{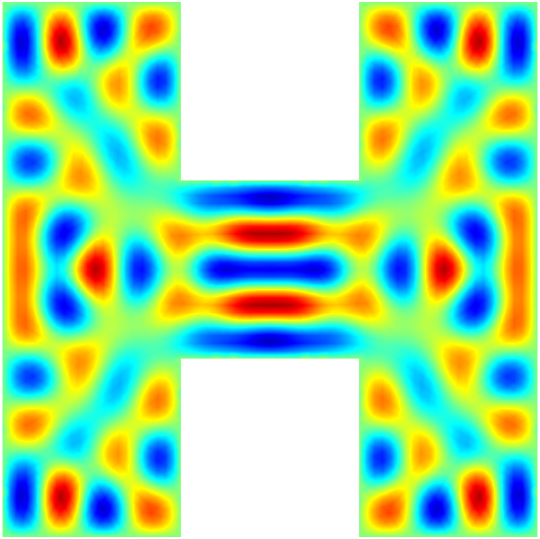} &
\includegraphics[scale=0.2]{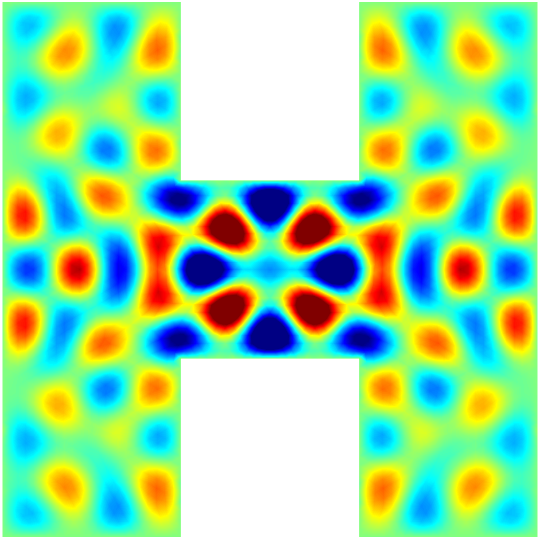} \\
\includegraphics[scale=0.2]{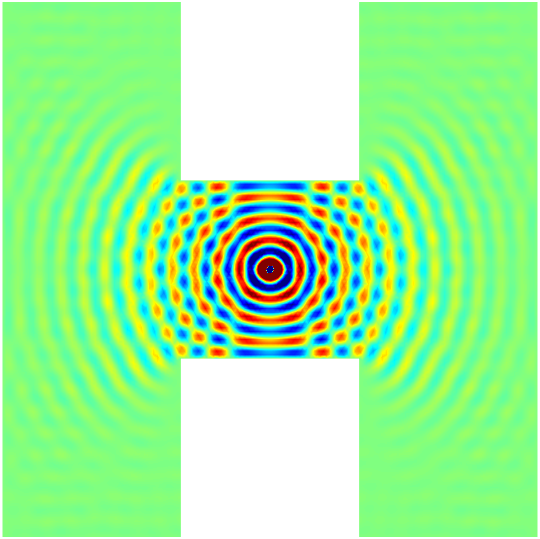} &
\includegraphics[scale=0.2]{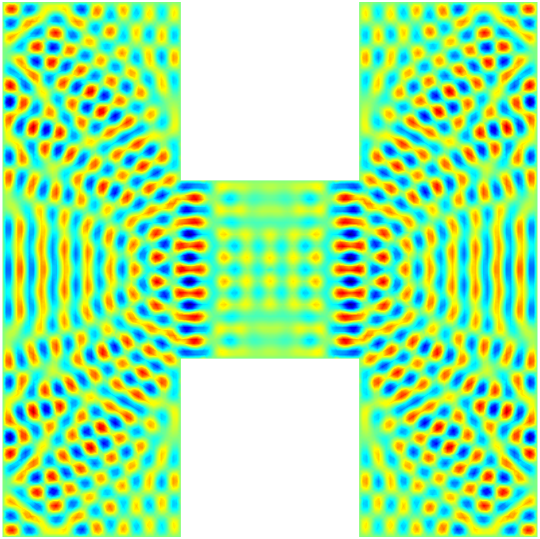} &
\includegraphics[scale=0.2]{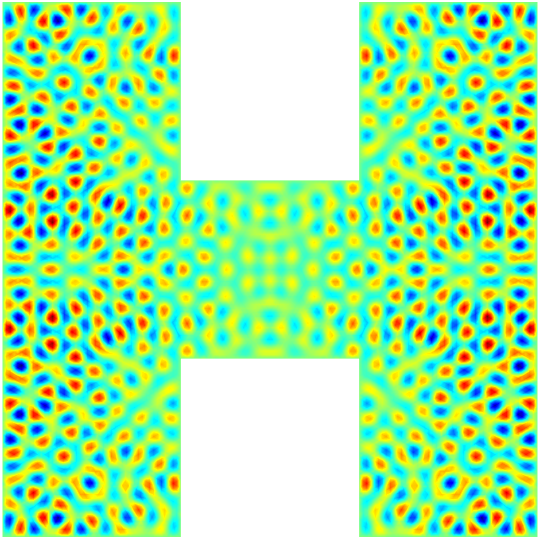} &
\includegraphics[scale=0.2]{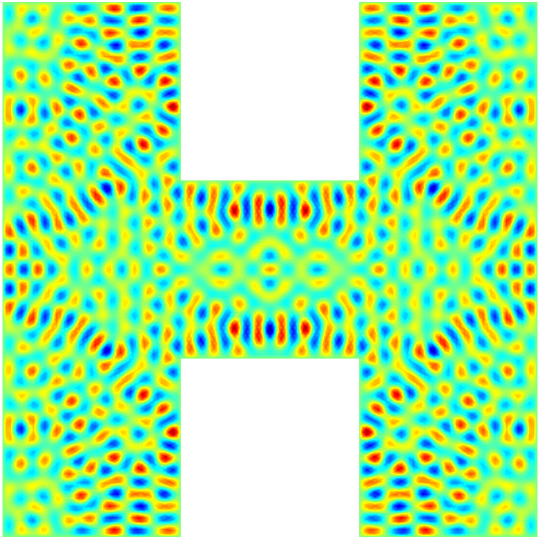}
\end{tabular}
\caption{Real part of the total field $u^\mathrm{tot}$ for the problems of scattering associated with the
Geometry 2, with point source $u_1^i$ located at
$z_0=(0,0)^\top$. Upper row:$w_0=15$, Lower row: $w_0=50$. Fields at
time $t=4,6,8$ and $10$ are displayed from left to right in each row.}
\label{Example2.1}
\end{figure}


\vskip 0.2cm
\noindent {\bf Geometries 3.} Problems of scattering posed in the
exterior of the groups of obstacles depicted in
Figures~\ref{Example3.1}(a) and \ref{Example3.2}(a) are now considered, with incident fields given by the functions $u_1^i$ and
$u_2^i$. For the incident function $u_1^i$ the point $z_0$ is located
within one of the obstacles, and, therefore, the exact solution for
the problem is given by $u(x,t)=-u_1^i(x,t)$ for $x\in D^\mathrm{e}$.
The numerical errors as a function of $t$ for various values of $M$
are displayed in Figures~\ref{Example3.1} and~\ref{Example3.2} which
clearly demonstrate the high accuracy of the proposed
solver. Figure~\ref{Example3.3} displays the numerical total field
resulting from the scattering of the plane incidence $u_2^i$ for
various values of $\theta^{inc}$. The parameter values $\omega_0=15$,
$W=25$, $J=800$ and $\Delta t=0.0125$ were used for all of the
Geometry 3 cases.

\begin{figure}[htbp] \centering
\begin{tabular}{ccc} \includegraphics[scale=0.12]{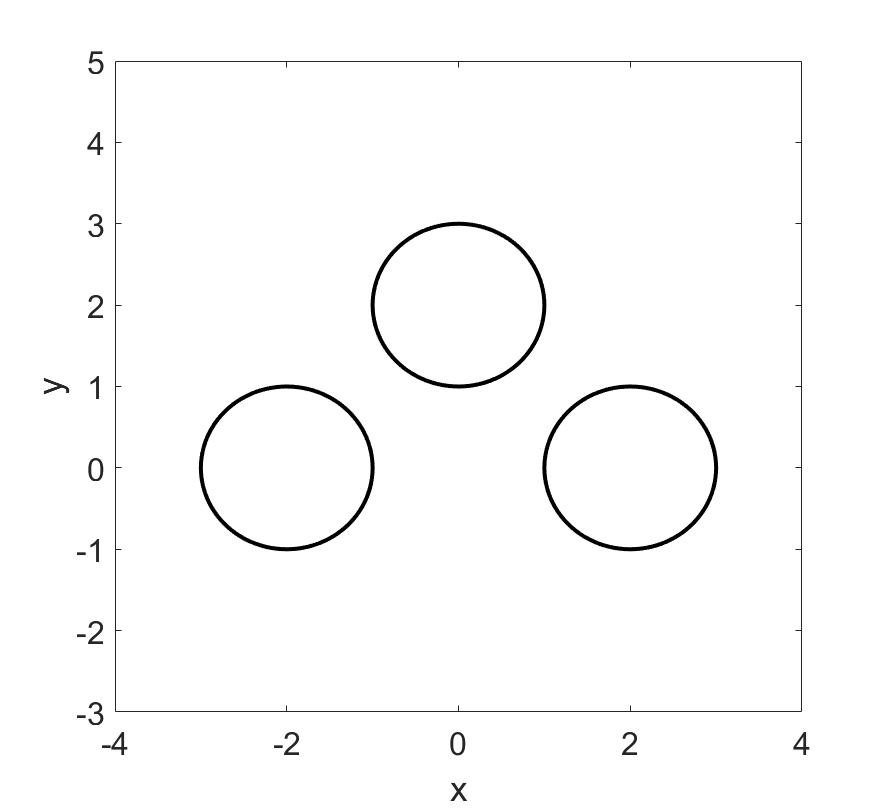}
& \includegraphics[scale=0.15]{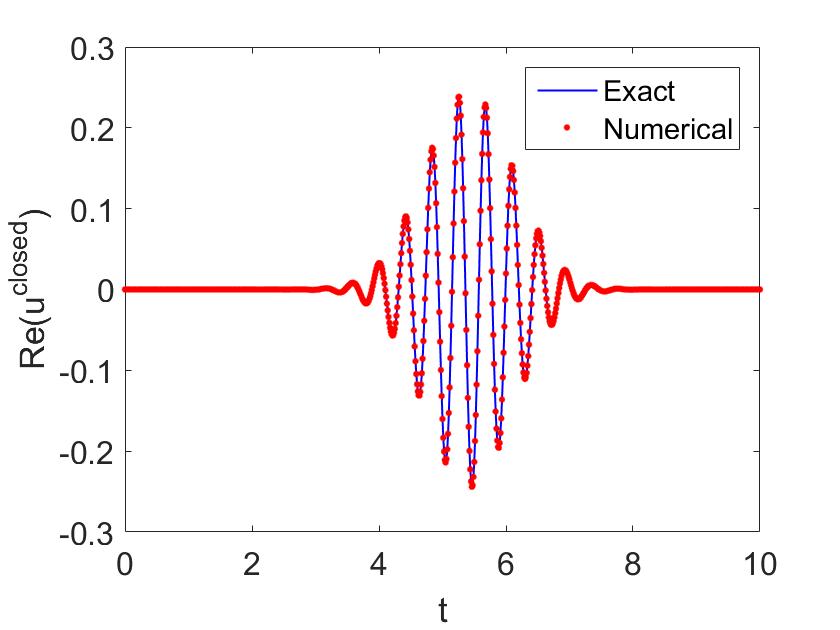} &
\includegraphics[scale=0.15]{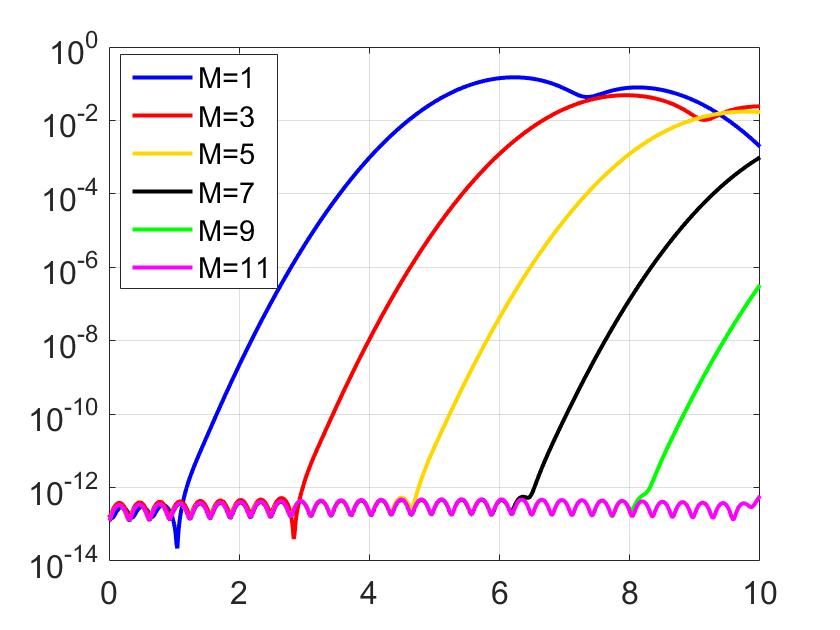} \\ (a) & (b) & (c)
\end{tabular}
\caption{Real part of the scattered field (b) at $x=(-1,1)^\top$
resulting from the incident field $u_1^i$ with $z_0=(0,2)^\top$ and
numerical errors (c) as functions of time $t$ for various values of
$M$ obtained for the exterior wave equation problem with 3 bounded
obstacles (a) considered in Geometry 3.}
\label{Example3.1}
\end{figure}

\begin{figure}[htbp] \centering
\begin{tabular}{ccc} \includegraphics[scale=0.12]{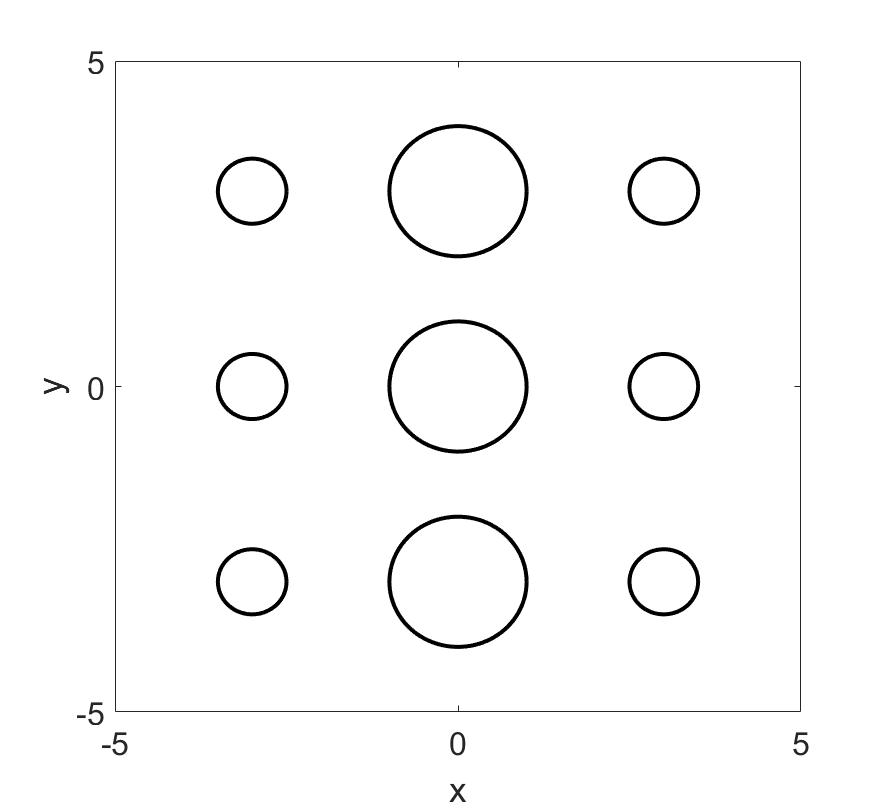}
& \includegraphics[scale=0.15]{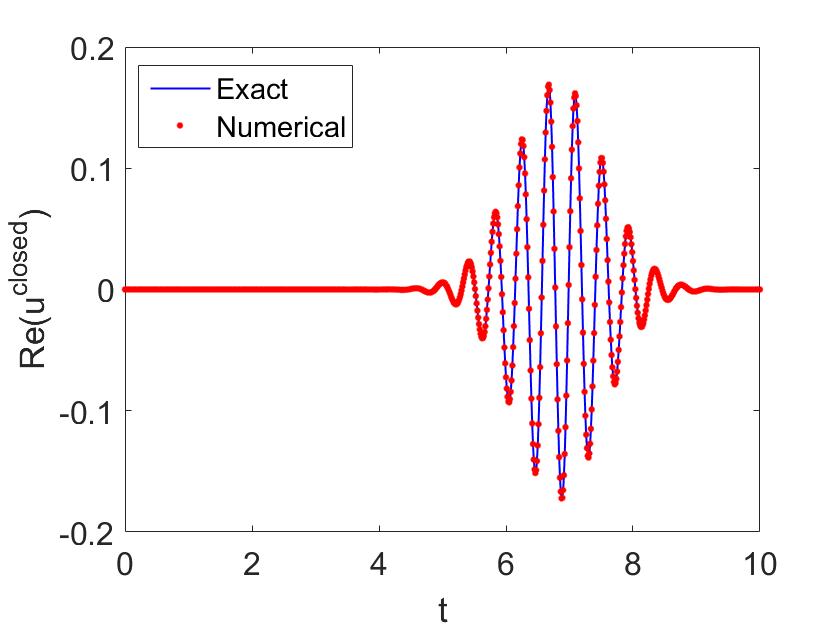} &
\includegraphics[scale=0.15]{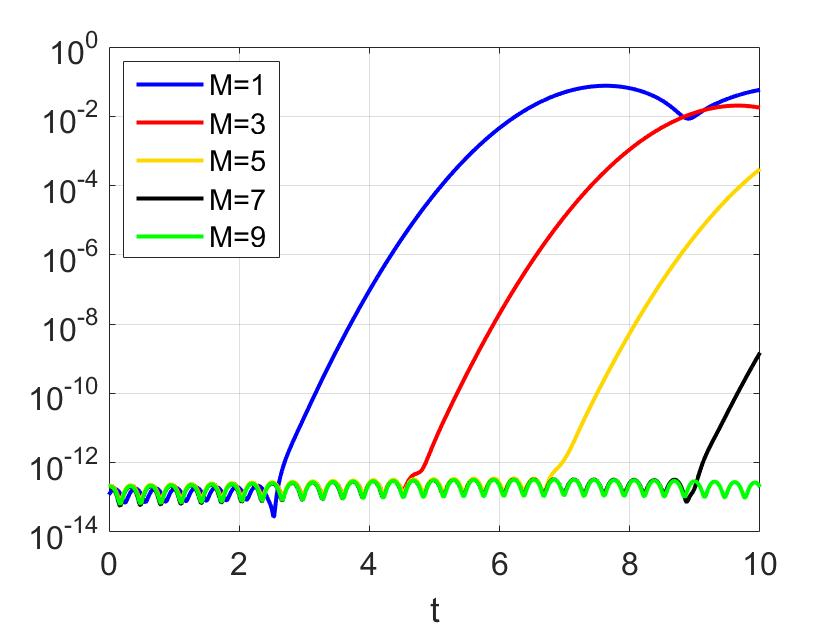} \\ (a) & (b) & (c)
\end{tabular}
\caption{Real part of the scattered field (b) at $x=(2,2)^\top$
resulting from the incident field $u_1^i$ with $z_0=(0,0)^\top$ and
numerical errors (c) as functions of time $t$ for various values of
$M$ obtained for the exterior wave equation problem with 9 bounded
obstacles (a) considered in Geometry 3.}
\label{Example3.2}
\end{figure}

\begin{figure}[htbp] \centering
\begin{tabular}{cccc}
\includegraphics[scale=0.06]{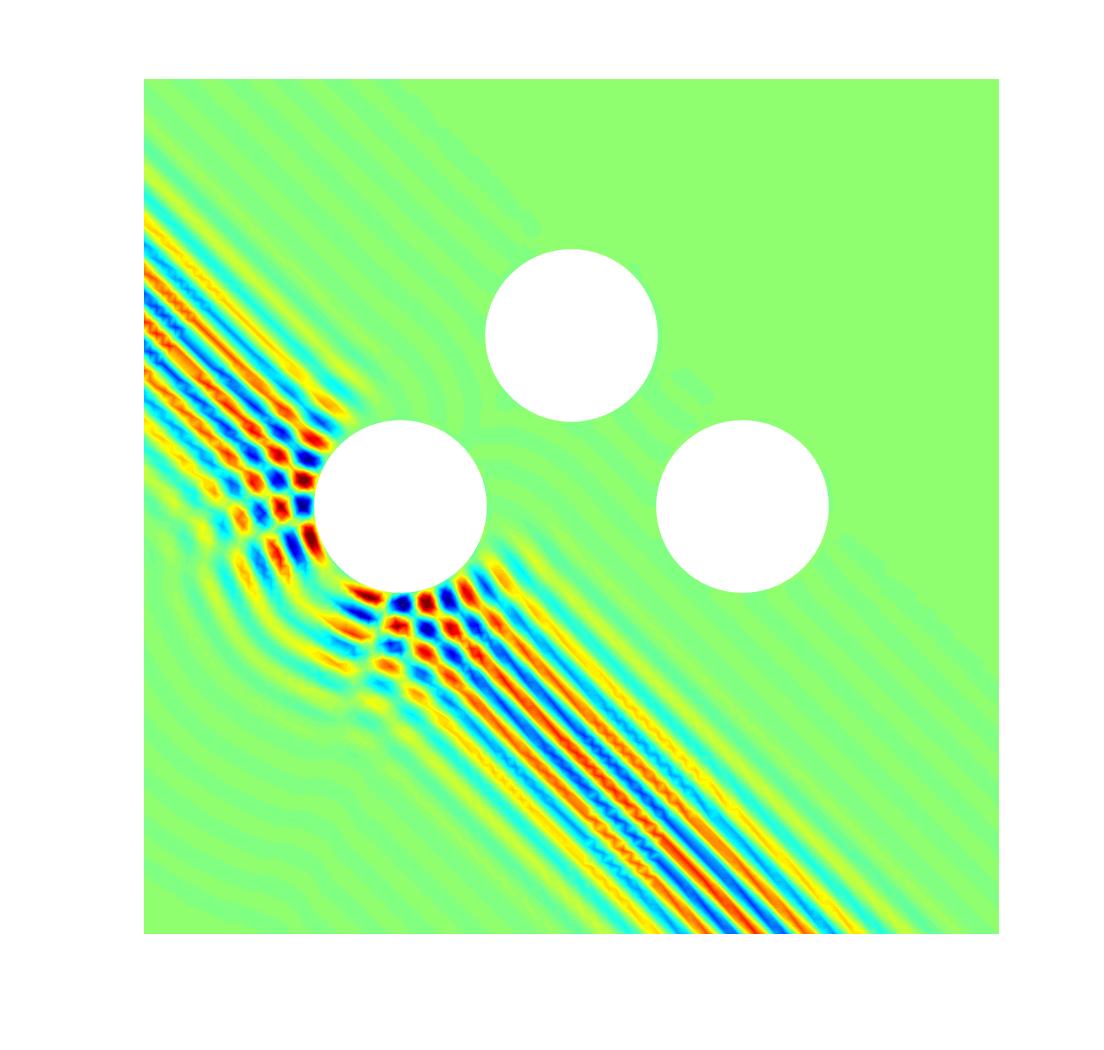} &
\includegraphics[scale=0.06]{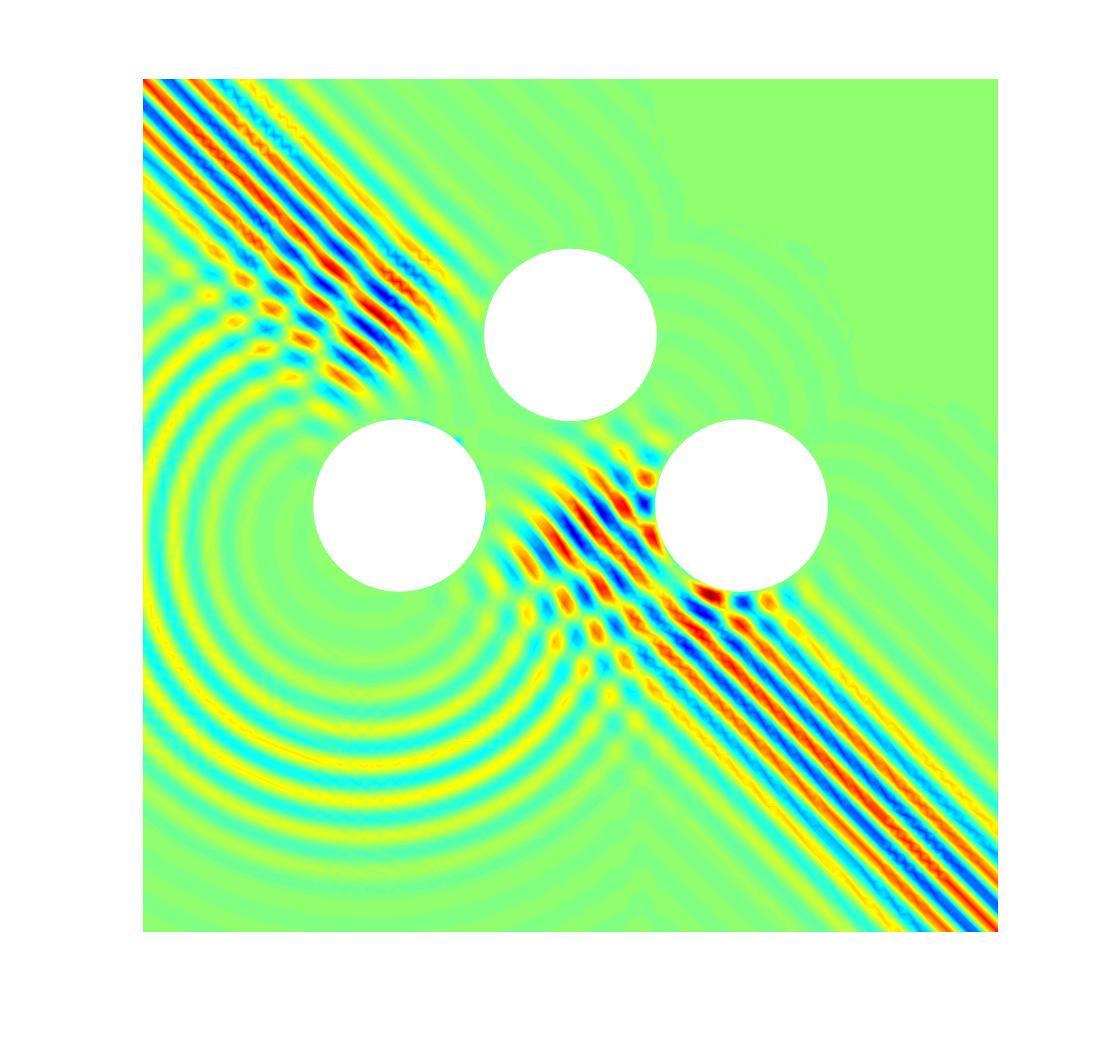} &
\includegraphics[scale=0.06]{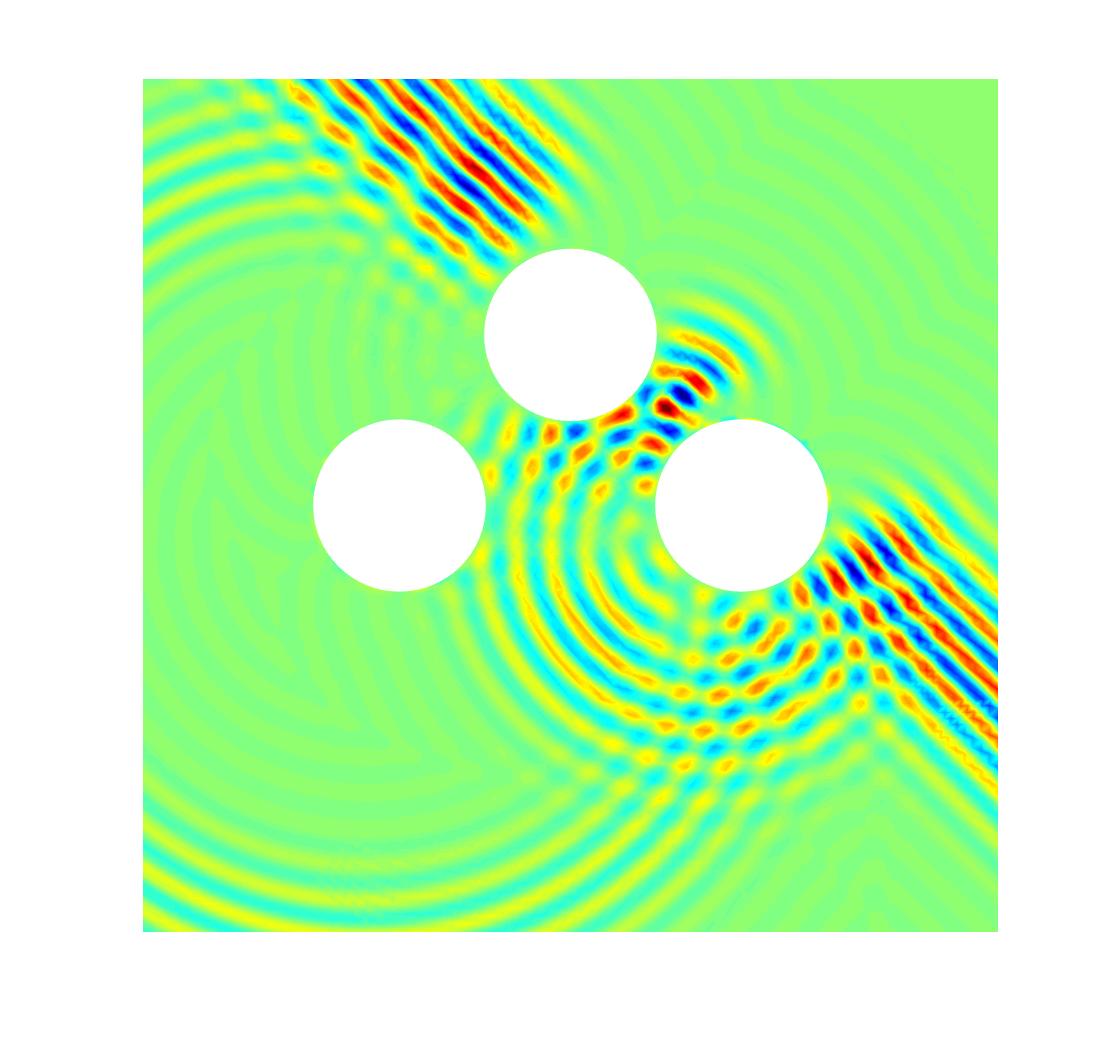} &
\includegraphics[scale=0.06]{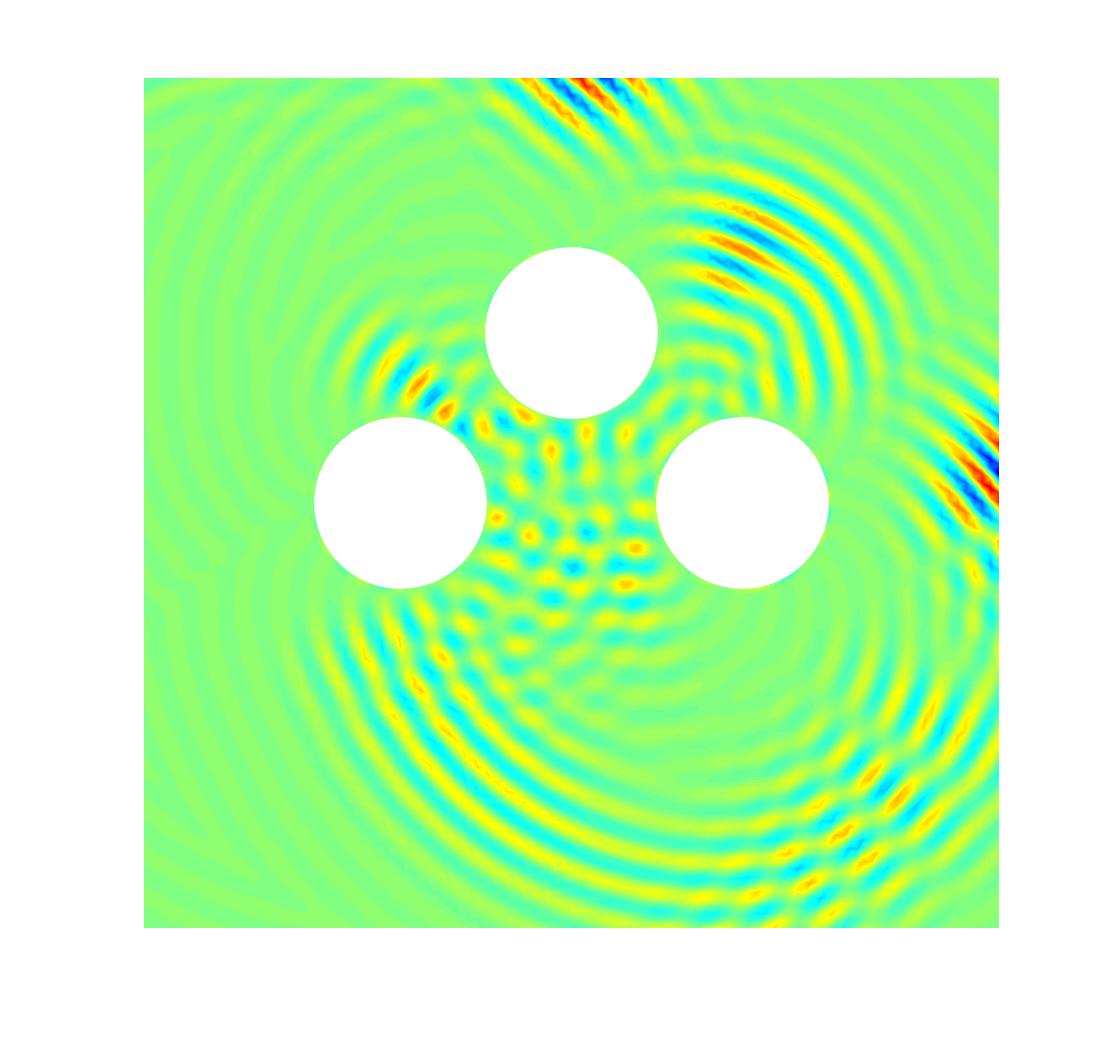} \\
\includegraphics[scale=0.06]{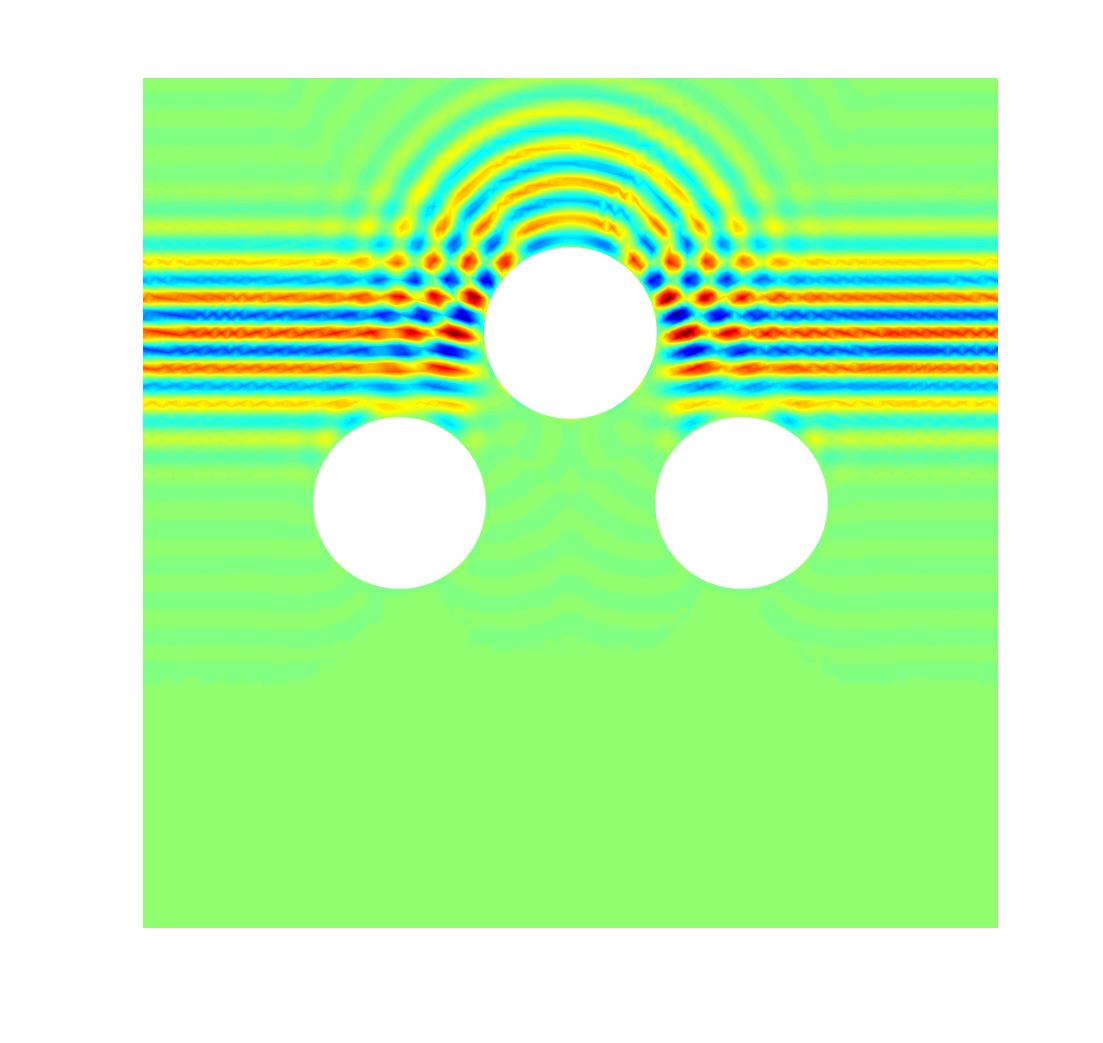} &
\includegraphics[scale=0.06]{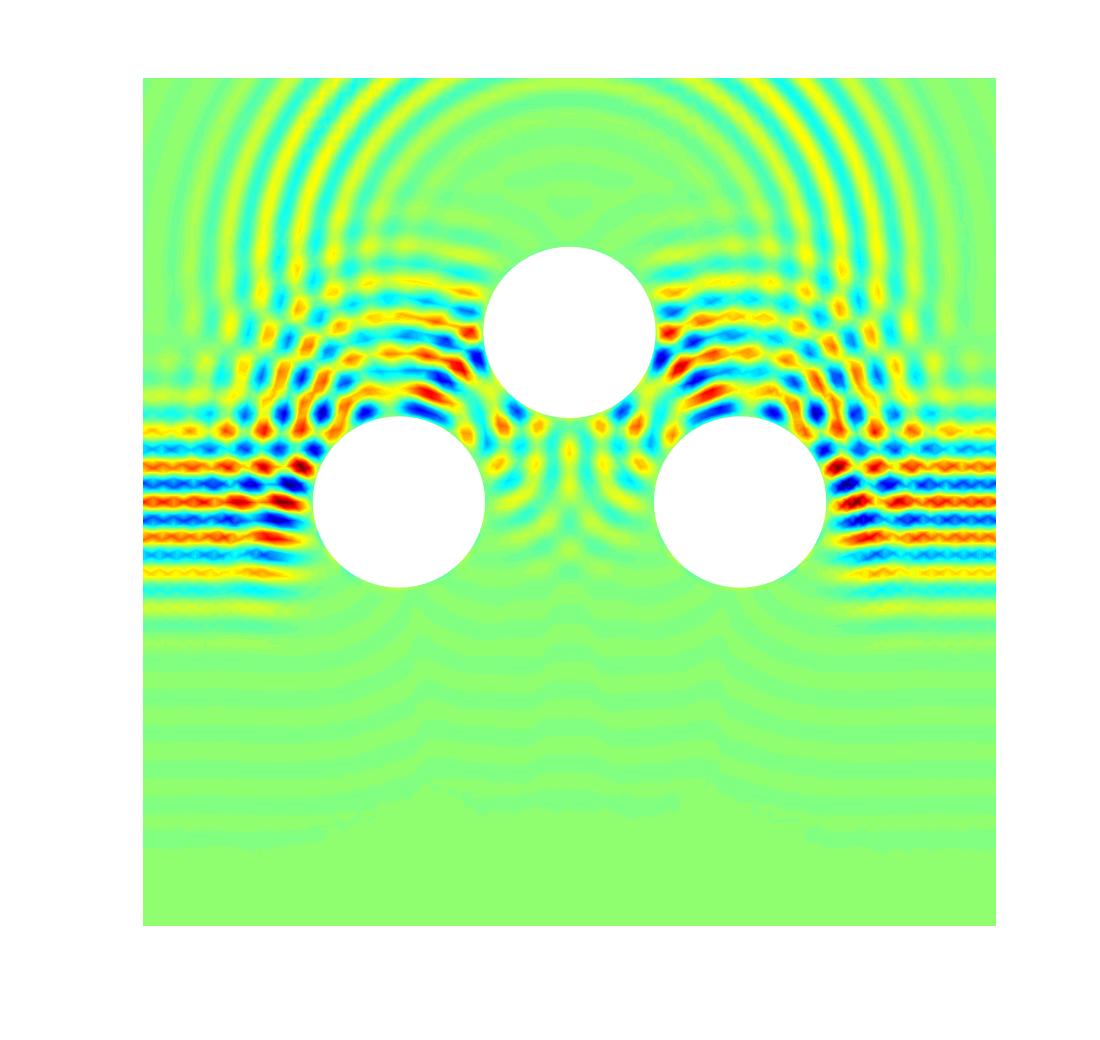} &
\includegraphics[scale=0.06]{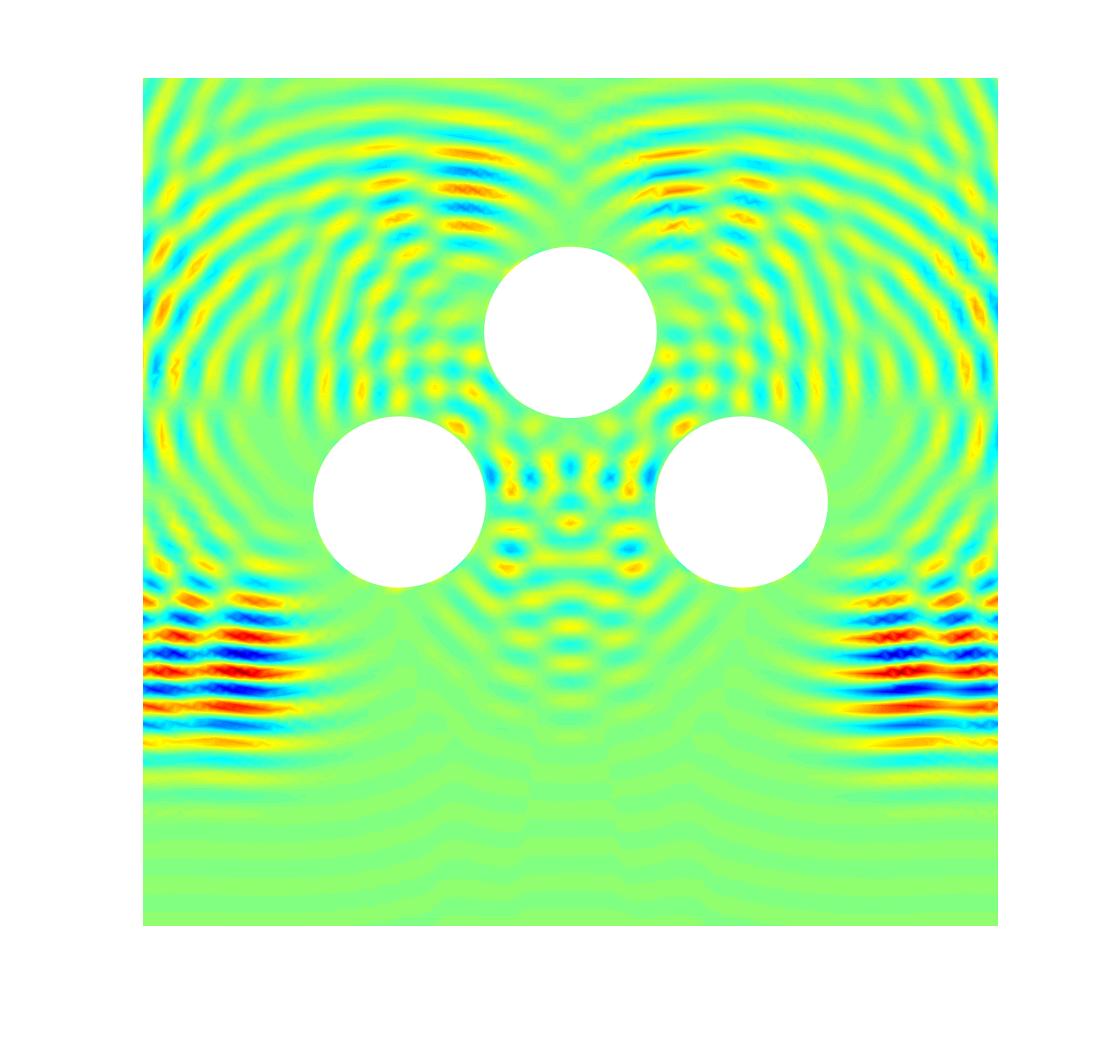} &
\includegraphics[scale=0.06]{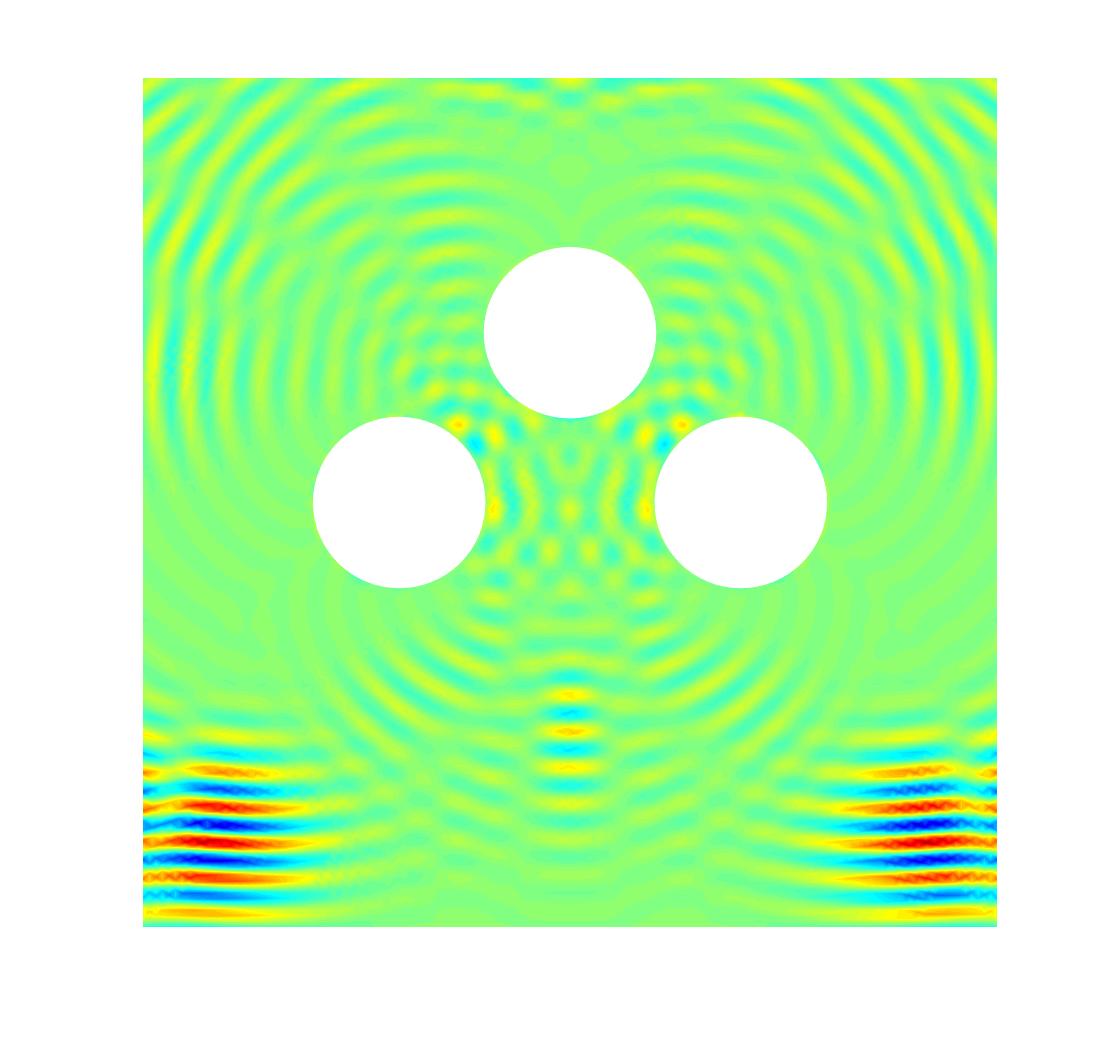}
\end{tabular}
\caption{Real part of the total fields for the scattering of plane
incidence $u^i_1$ by three bounded obstacles considered in Geometry
3. Upper row: $\theta^\textrm{inc}=\frac{\pi}{4}$. Lower row:
$\theta^\textrm{inc}=-\frac{\pi}{2}$. Fields at times $t = 2, 4, 6$
and 8 are displayed from left to right in each row.}
\label{Example3.3}
\end{figure}

\vskip 0.2cm
\noindent {\bf Geometry 4.} We now consider  scattering problems
in the exterior of the group of three open-arcs
displayed in Figure~\ref{Example4.1}(a). A point source $u_1^i$ with
$z_0=(0,0)^\top$ and $\omega_0=15$ is considered for this example, for
which we used the parameter values $W=25$, $J=800$ and $\Delta
t=0.0125$. The resulting numerical errors as a function of $t$ for
various values of $M$ are presented in Figure~\ref{Example4.1} which
also demonstrates the high accuracy of the proposed solver. The
numerical total fields resulting for this geometry from the scattering
of the plane incidence $u_2^i$ with $\theta^{inc}=-\frac{3}{4}\pi$ are
displayed in Figure~\ref{Example4.2}.

\begin{figure}[htbp] \centering
\begin{tabular}{ccc} \includegraphics[scale=0.15]{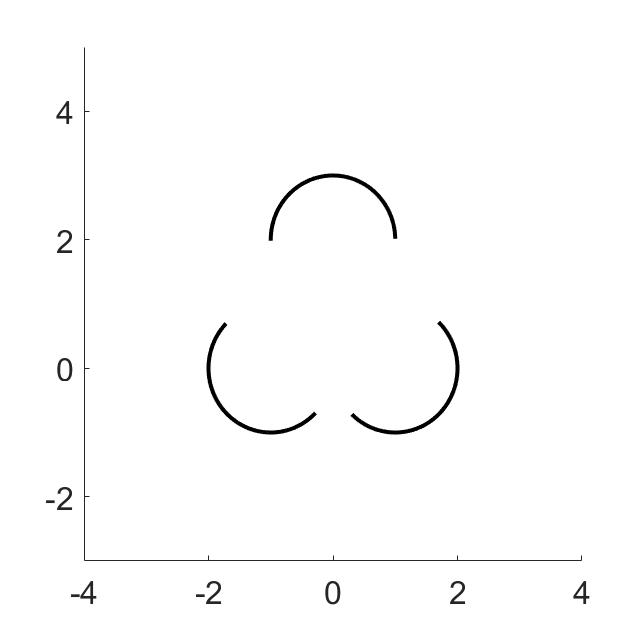}
& \includegraphics[scale=0.15]{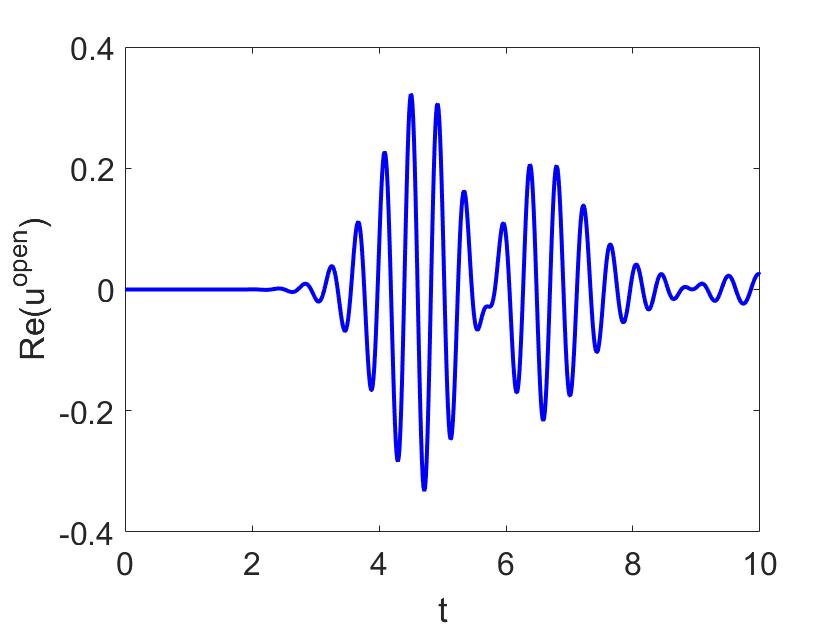} &
\includegraphics[scale=0.15]{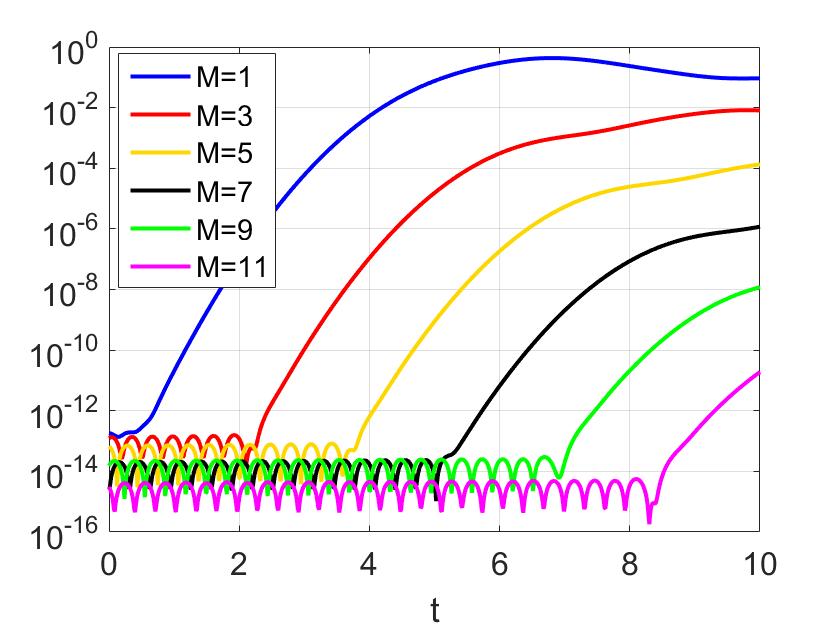} \\ (a) & (b) & (c)
\end{tabular}
\caption{Real part of the scattered field (b) at $x=(-1,0)^\top$
resulting from the incident field $u_2^i$ and numerical errors (c) as
functions of time $t$ for various values of $M$ obtained for the
exterior wave equation problem with 3 open-arcs (a) considered in
Geometry 4.}
\label{Example4.1}
\end{figure}

\begin{figure}[htbp] \centering
\begin{tabular}{cccc} 
\includegraphics[scale=0.06]{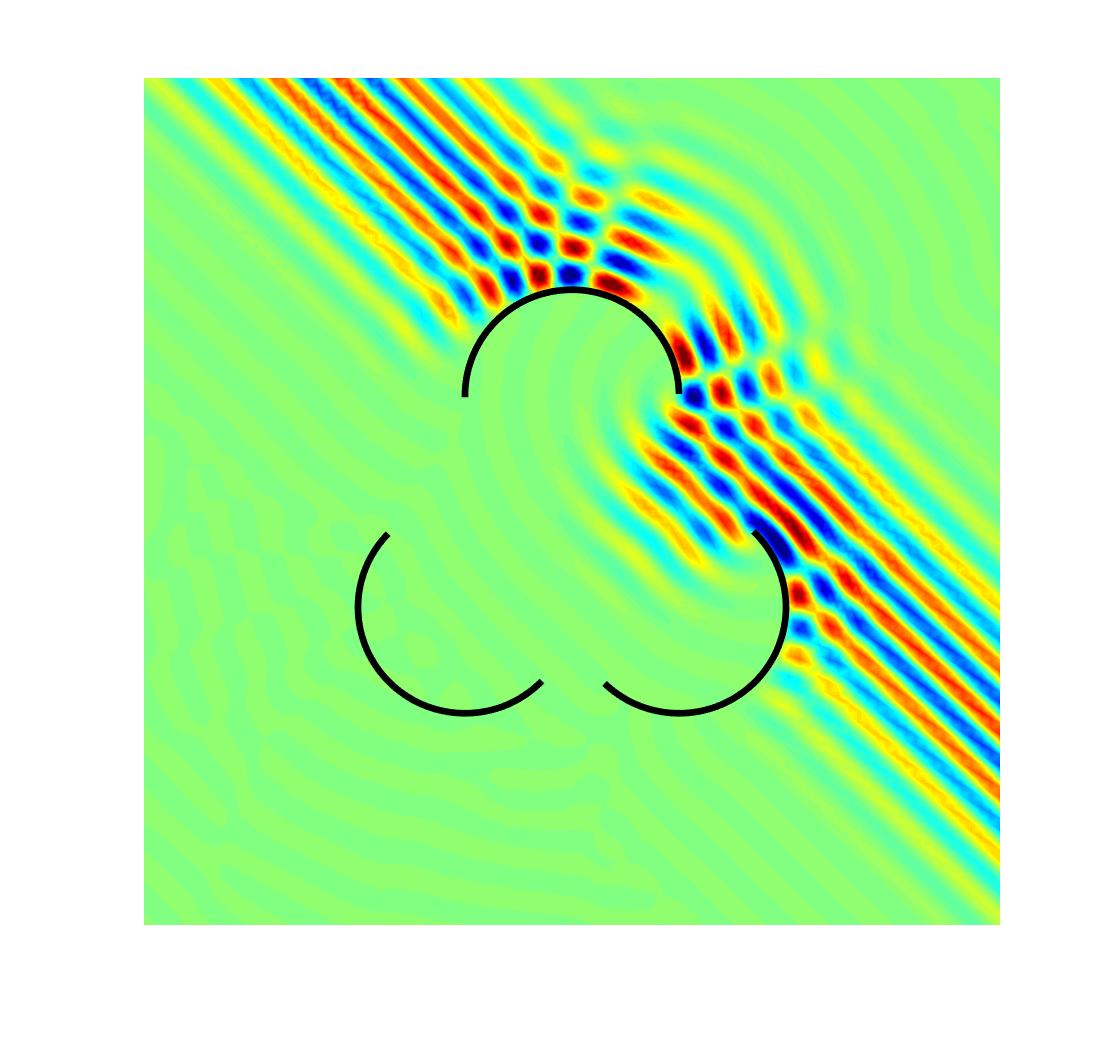} &
\includegraphics[scale=0.06]{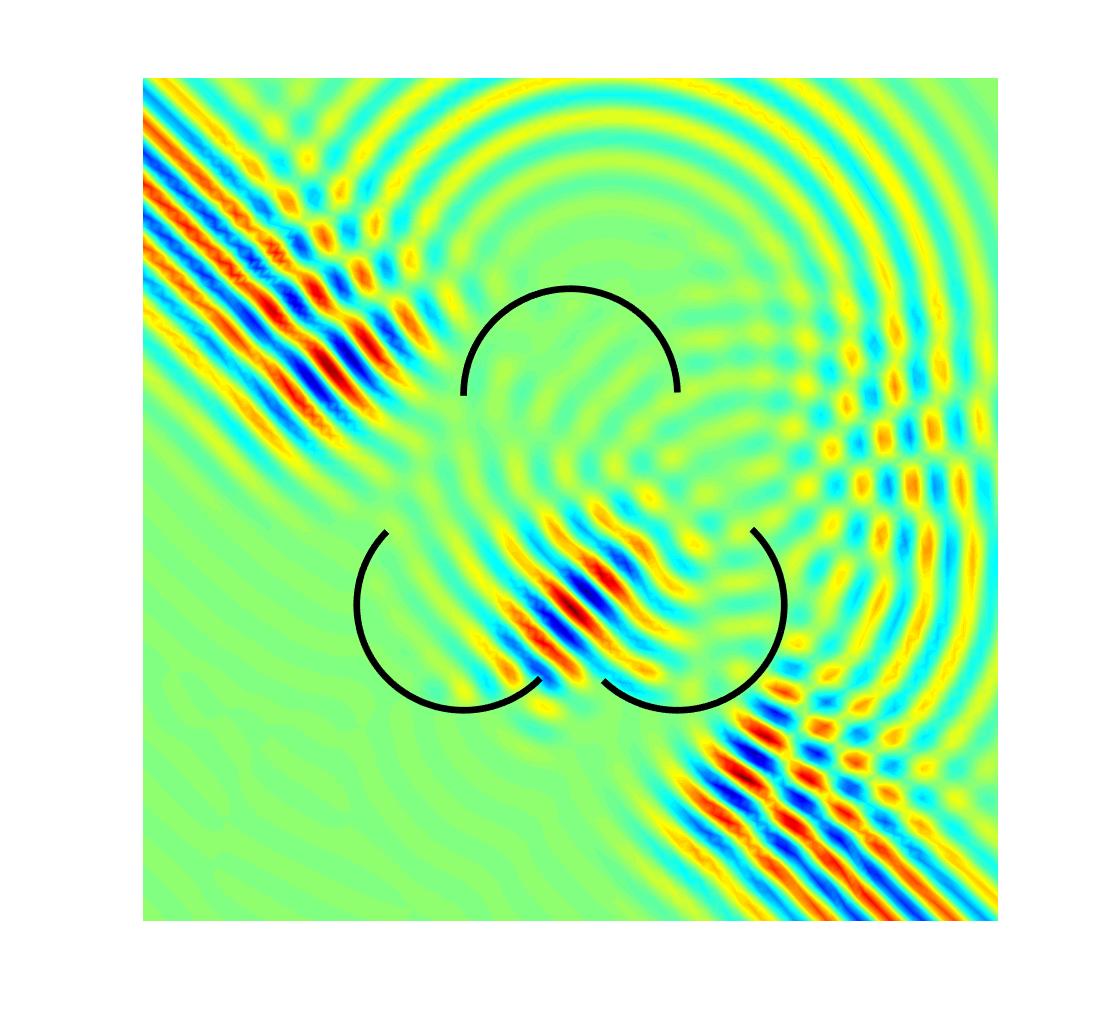} &
\includegraphics[scale=0.06]{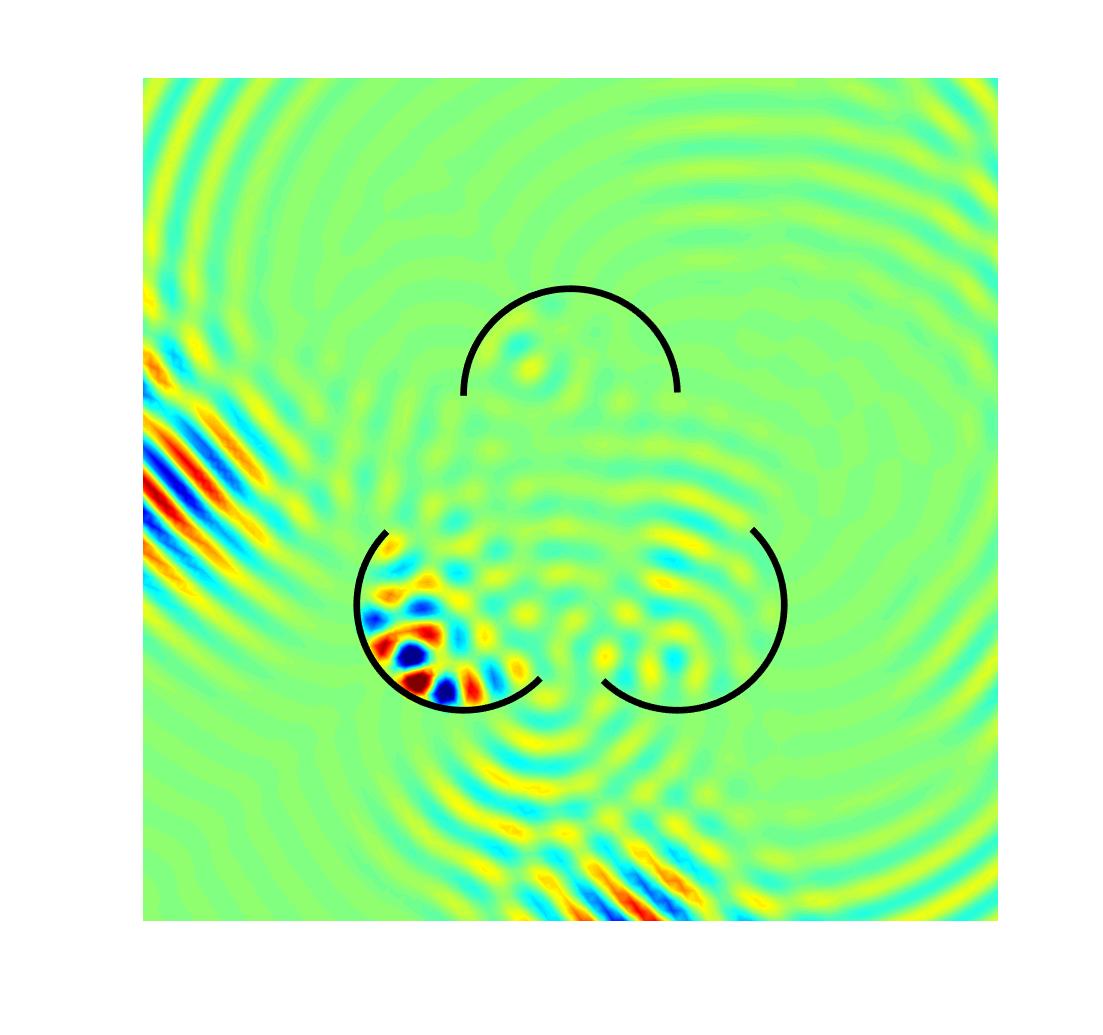} &
\includegraphics[scale=0.06]{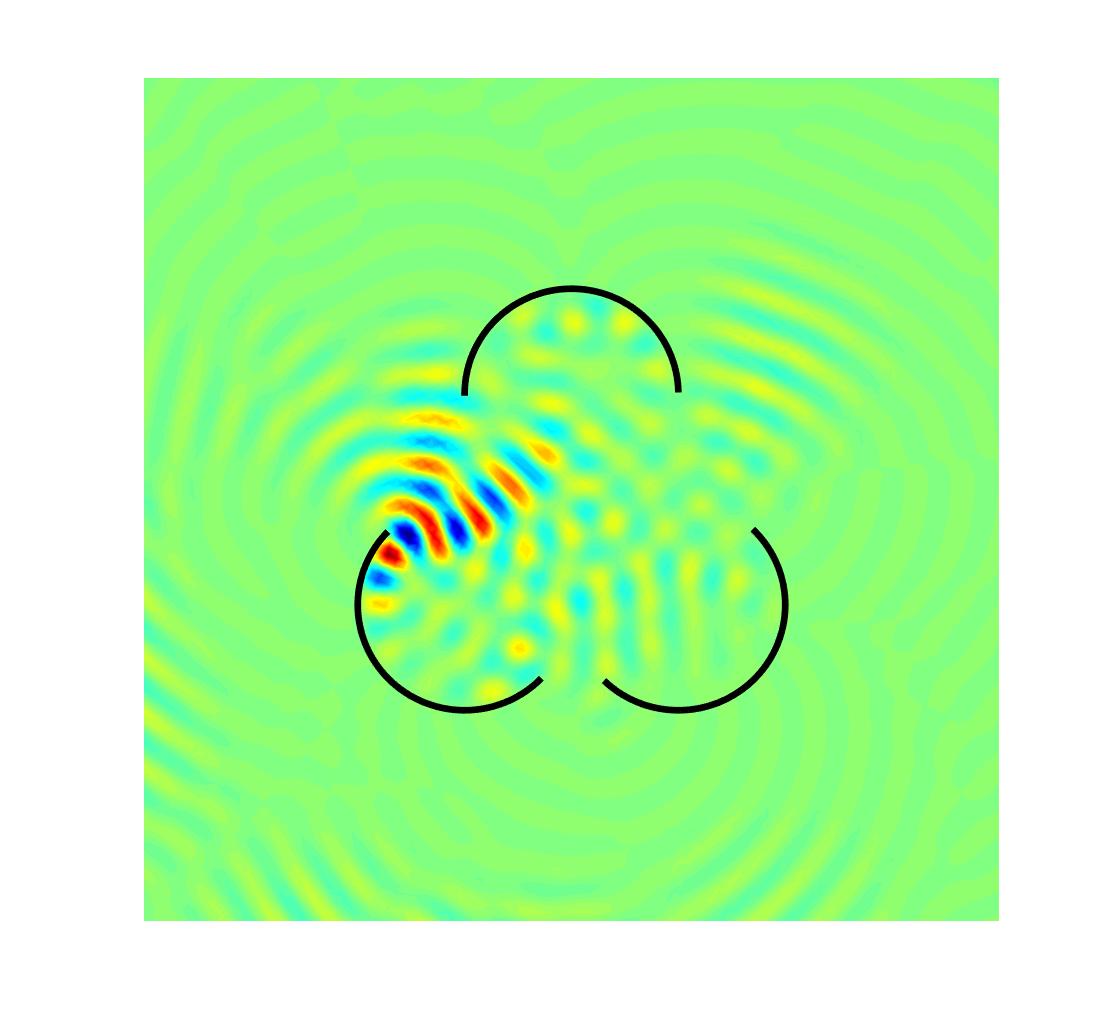}
\end{tabular}
\caption{Real part of the total fields for the scattering of plane
wave $u^i_2$ with $\theta^\textrm{inc}=-\frac{3}{4}\pi$ by three
open-arcs considered in Geometry 4. Fields at times $t = 2, 4, 6$ and
8 are displayed from left to right.}
\label{Example4.2}
\end{figure}

\vskip 0.2cm
\noindent {\bf Geometries 5.} Finally we consider problems of
scattering by a highly trapping open-arc cavity structures with a
small openings, whose numerical solution can be challenging on account
of wave trapping and associated large numbers of GMRES
frequency-domain iterations.  We consider two cavity geometries,
namely, a circular cavity
$\Gamma=\{x=(\cos\theta,\sin\theta)^\top\in\R^2:
\theta\in(0.02\pi,1.98\pi )\}$ and a rocket-like open cavity (see
Figure~\ref{Example5.4}) for which we choose $N=6$ and $N=10$,
respectively.  We first consider the problem of a circular cavity
under plane wave incidence, $u^i_2(x,t)$, with an incidence angle
$\theta^\text{inc} = -\pi$, and using the parameter values: $\omega_0
= 15$, $W = 25$, $J = 401$, and $\Delta t = 0.01$. Solution values at
$x=(-0.5,0)^\top$ as a function of $t$ together with numerical errors
resulting for various values of $M$ are displayed in
Figure~\ref{Example5.1}. The total fields for the higher-frequency
case $\omega_0=50$ are displayed in Figure~\ref{Example5.2}. We
emphasize the substantial benefits of boundary decomposition, which,
as illustrated in Figure~\ref{Example5.3}, can significantly reduce
the number of GMRES iterations required for the numerical evaluation
of the frequency-domain BIEs. The total fields for the rocket-like
open cavity under the $u_2^i(x, t)$ incident field with $\omega_0=50$
and $M=50$ are presented in Figure~\ref{Example5.4}.

\begin{figure}[htbp] \centering
\begin{tabular}{ccc} \includegraphics[scale=0.08]{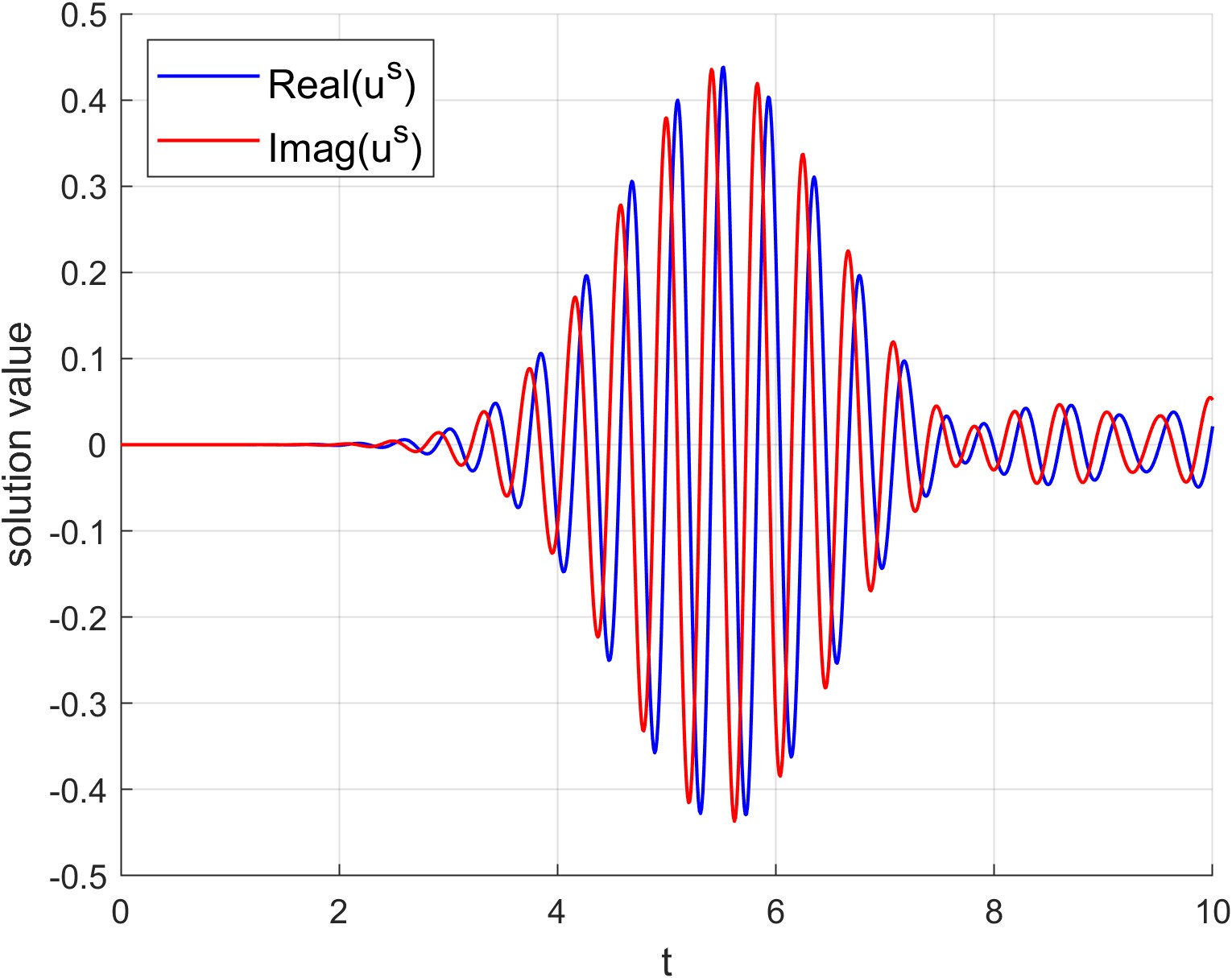} &
\includegraphics[scale=0.08]{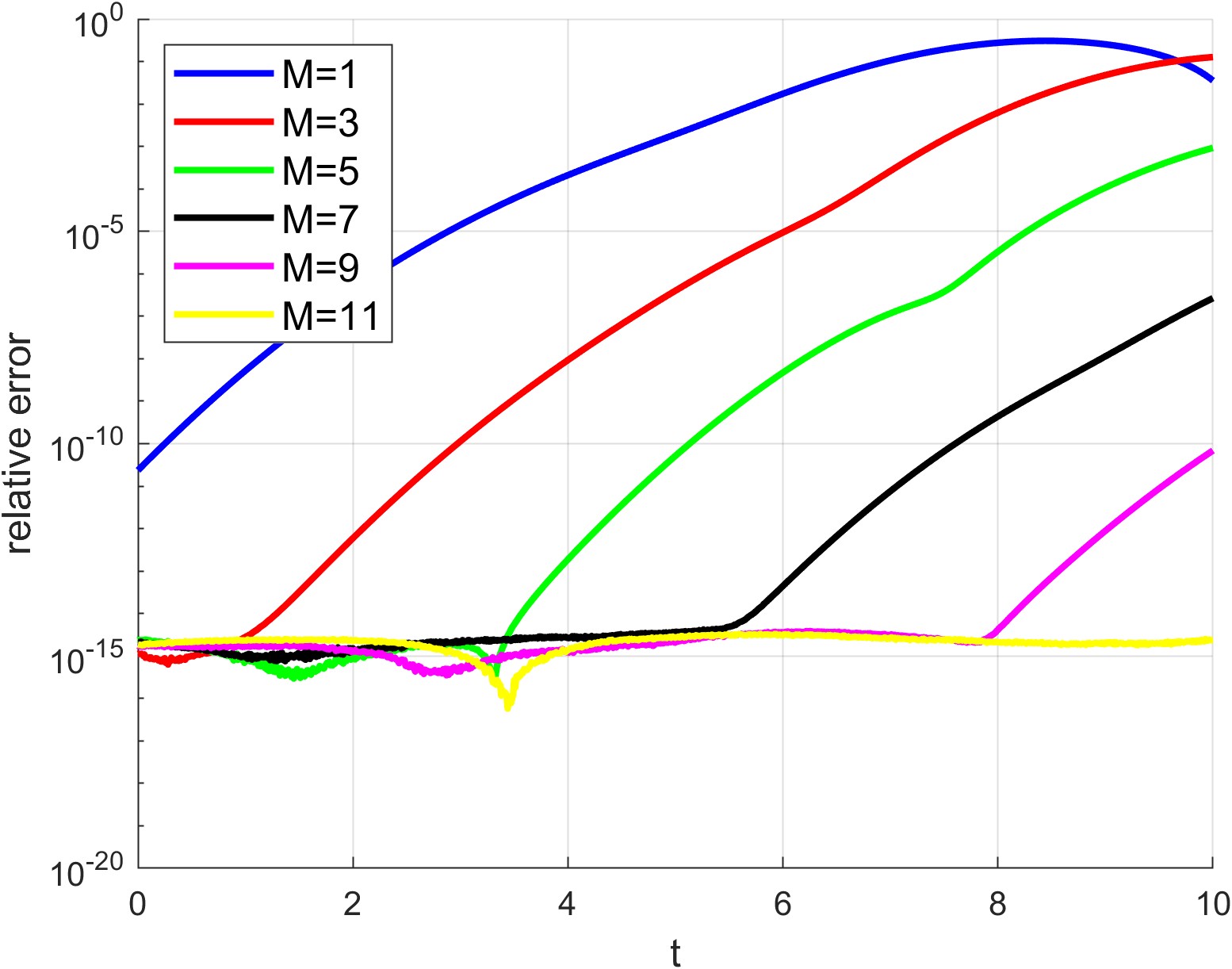} \\ (a) & (b)
\end{tabular}
\caption{Real part of the scattered field (a) at $x=(0.5,0)^\top$
resulting from the incident field $u_2^i$ with
$\theta^\textrm{inc}=-\pi$. and numerical errors (b) as functions of
time $t$ for various values of M.}
\label{Example5.1}
\end{figure}

\begin{figure}[htbp] \centering
  \begin{tabular}{cccc}
\includegraphics[scale=0.07]{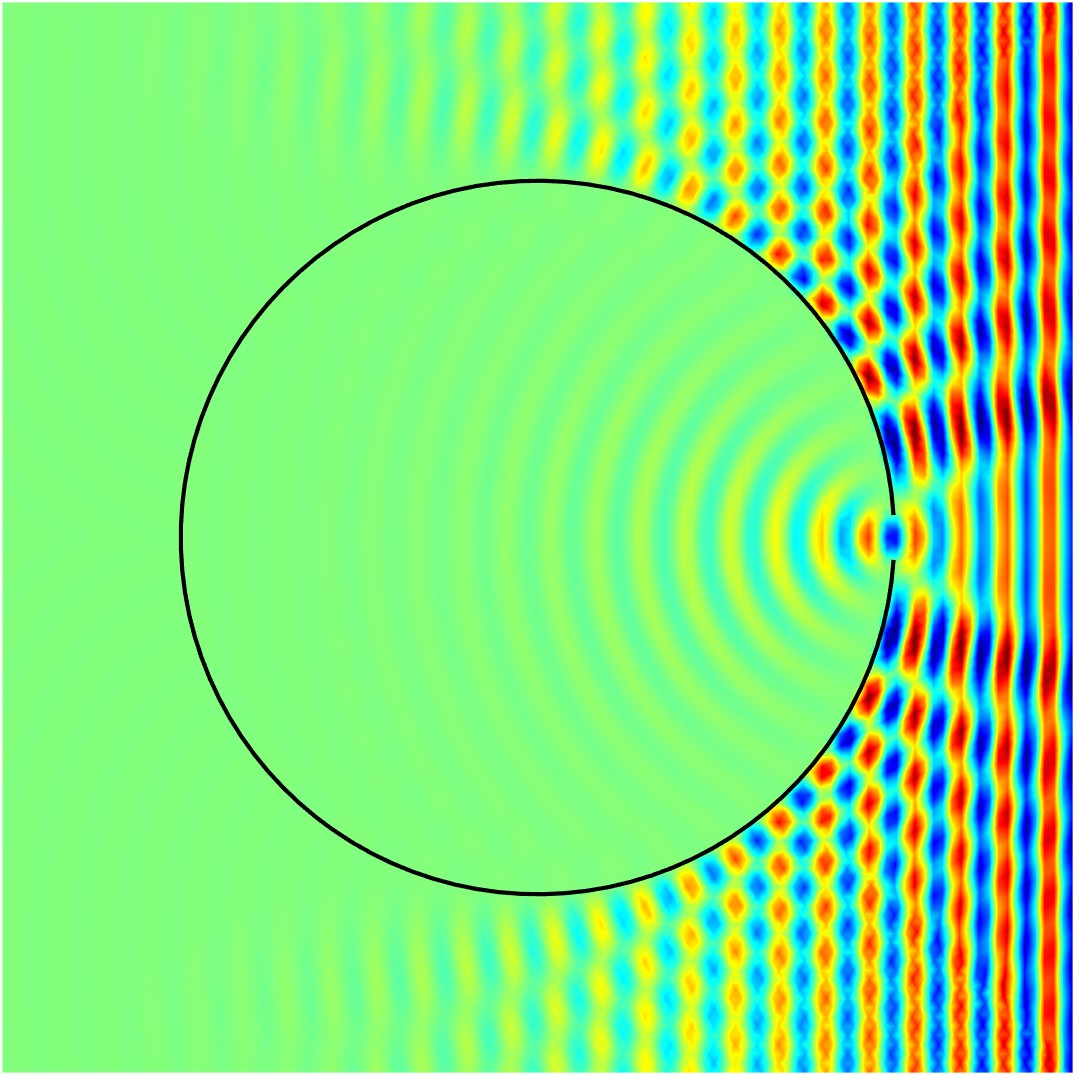} &
\includegraphics[scale=0.07]{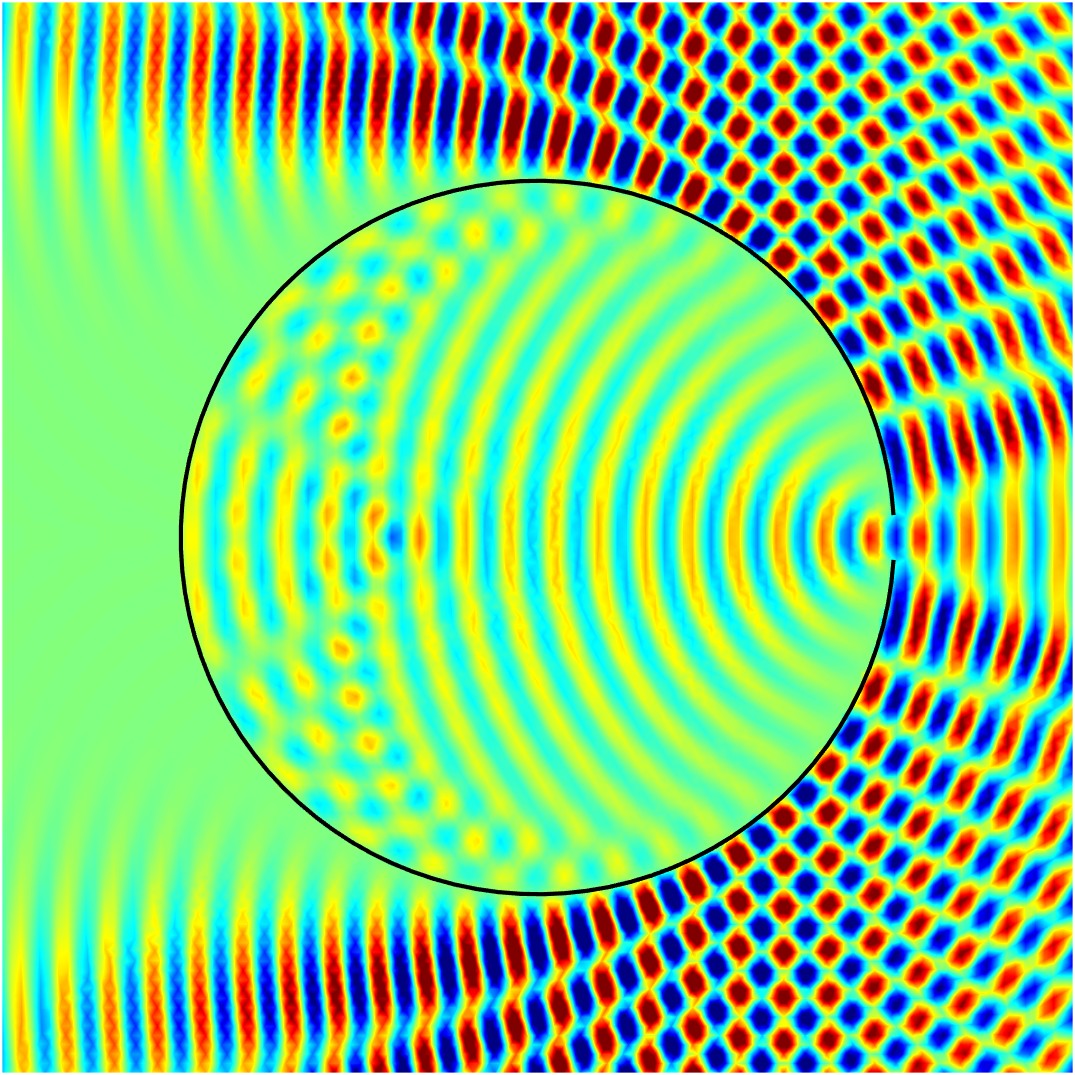} &
\includegraphics[scale=0.07]{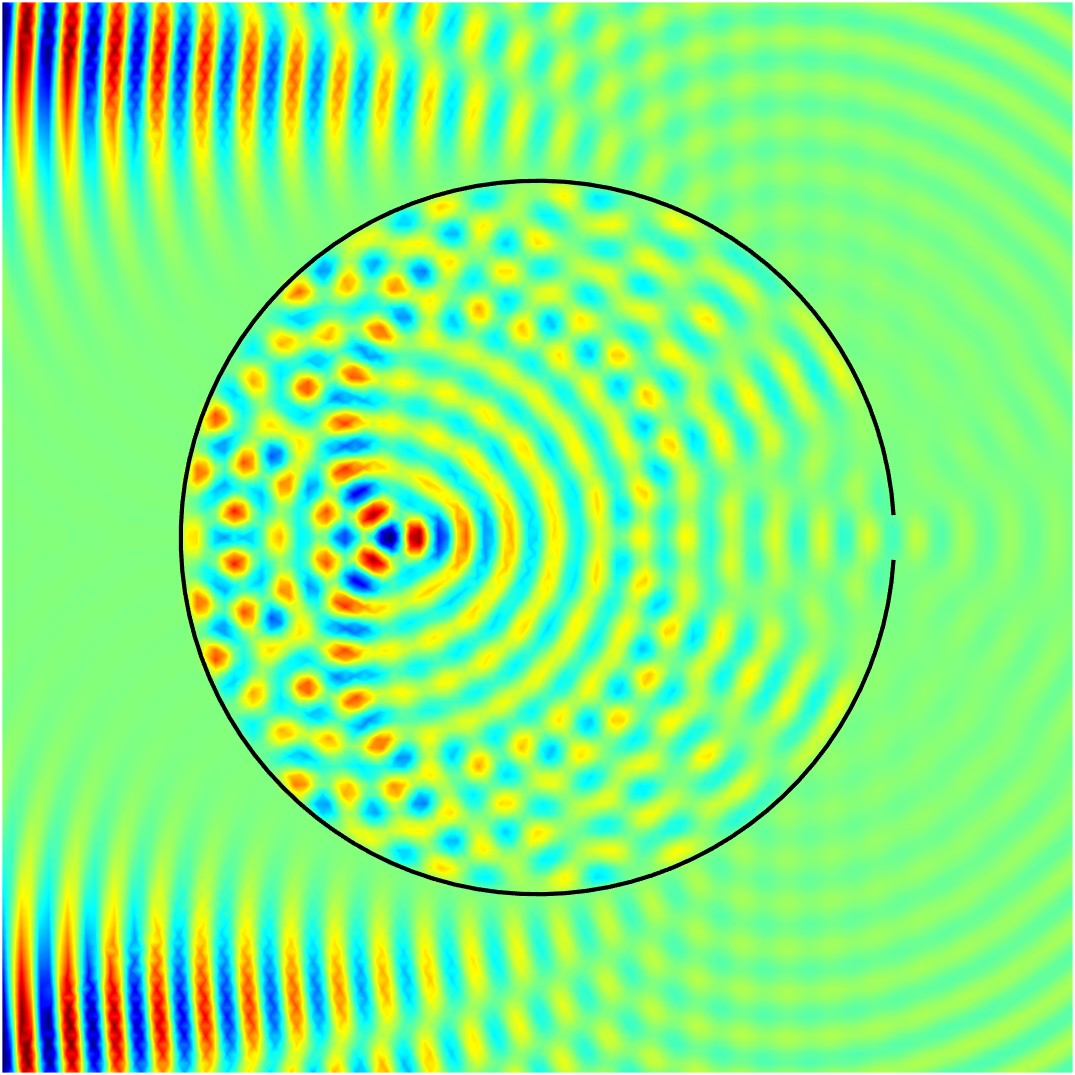} &
\includegraphics[scale=0.07]{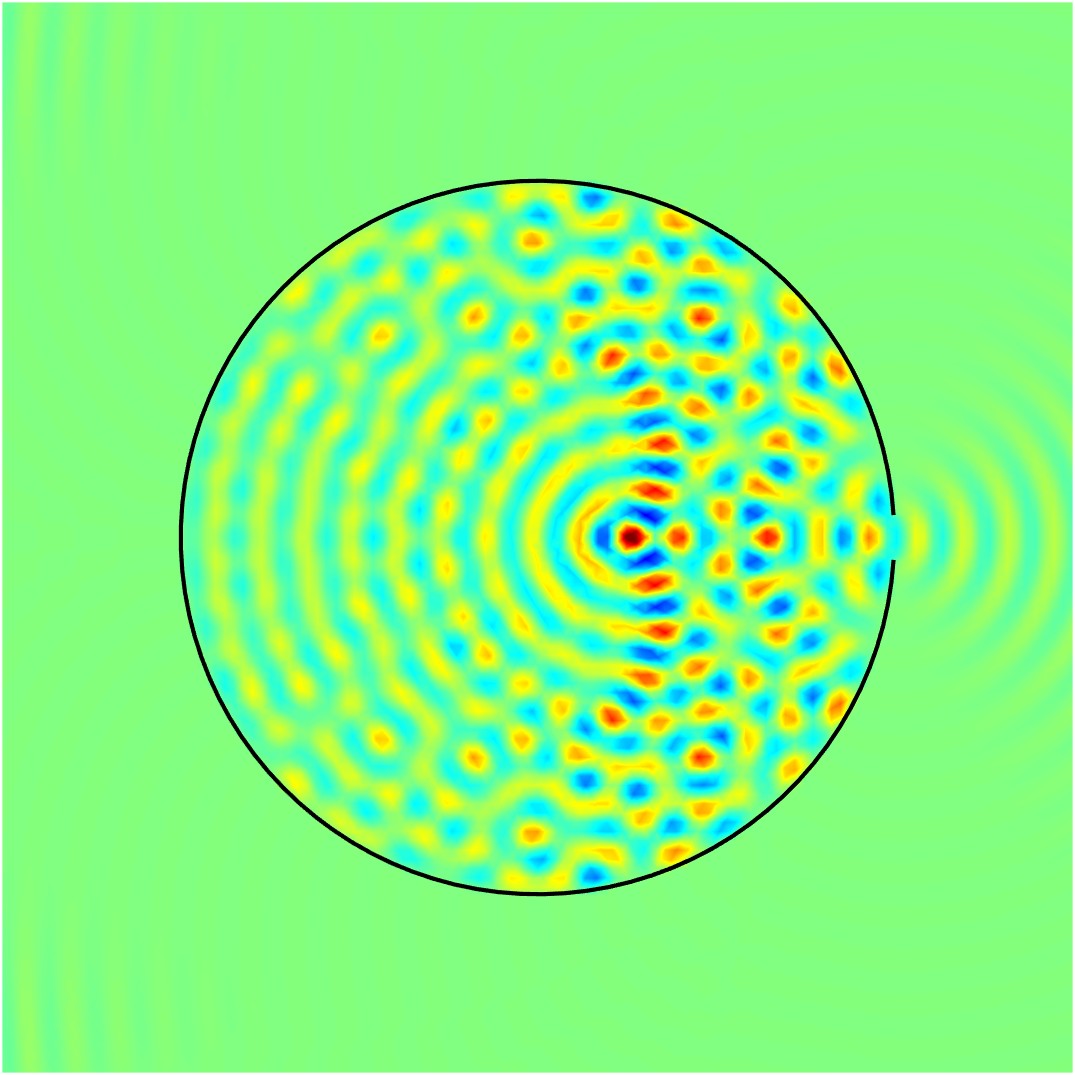}
  \end{tabular}
  \caption{Real part of the total fields for the scattering of plane
wave $u^i_2$ with $\theta^\textrm{inc}=-\pi$ by considered in Example
5. Fields at times $t = 4, 6, 8$ and 10 are displayed from left to
right.}
  \label{Example5.2}
  \end{figure}

\begin{figure}[htbp] \centering
\includegraphics[scale=0.1]{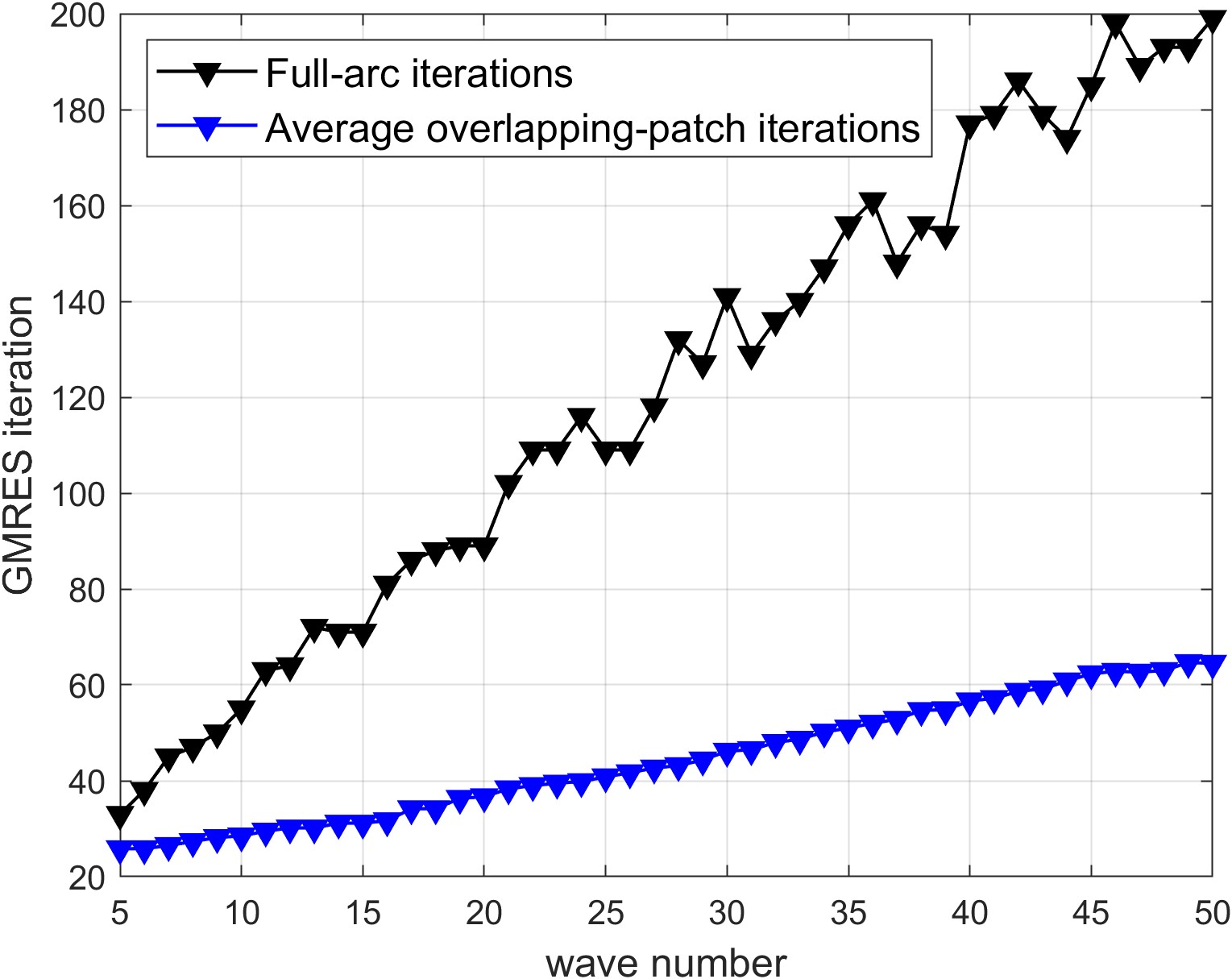}
\caption{GMRES iteration numbers for different frequencies while
solving the integral equation~(\ref{FDBIEk-interior}) on the full-arc
(circular cavity) or 6 overlapping patches such that the relative
residual is smaller than $10^{-6}$.}
\label{Example5.3}
\end{figure}

\begin{figure}[htbp] \centering
  \begin{tabular}{cccc}
\includegraphics[scale=0.07]{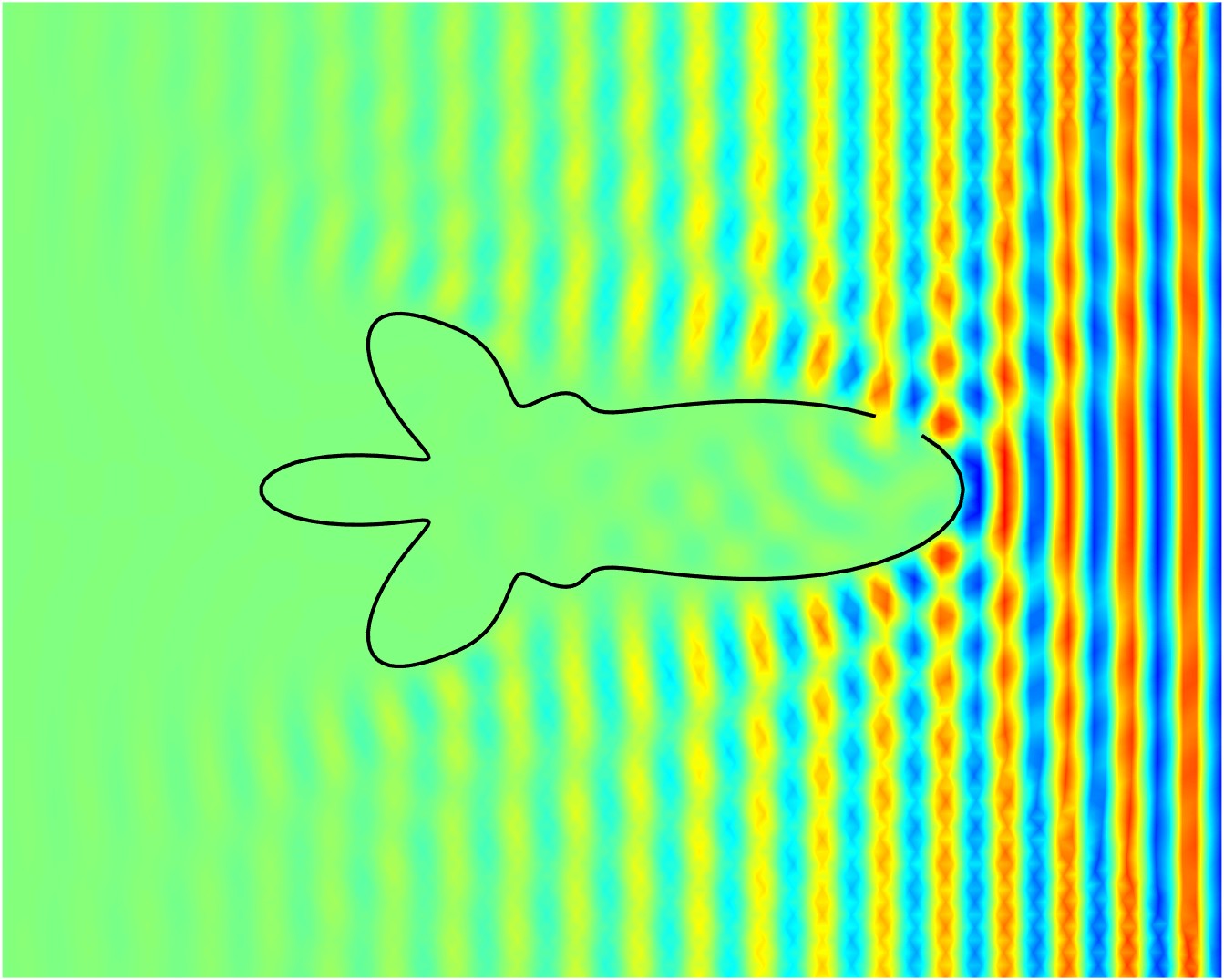} &
\includegraphics[scale=0.07]{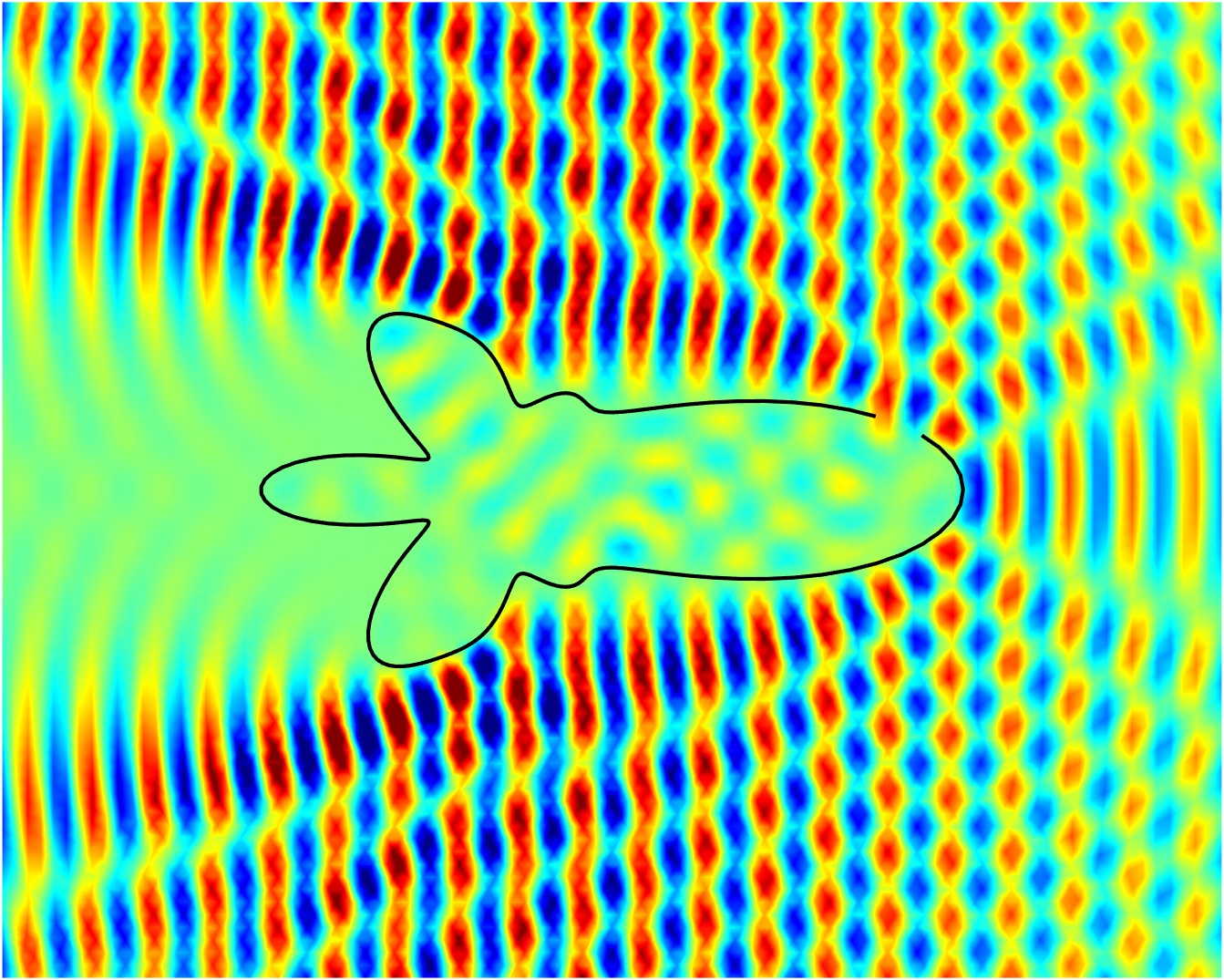} &
\includegraphics[scale=0.07]{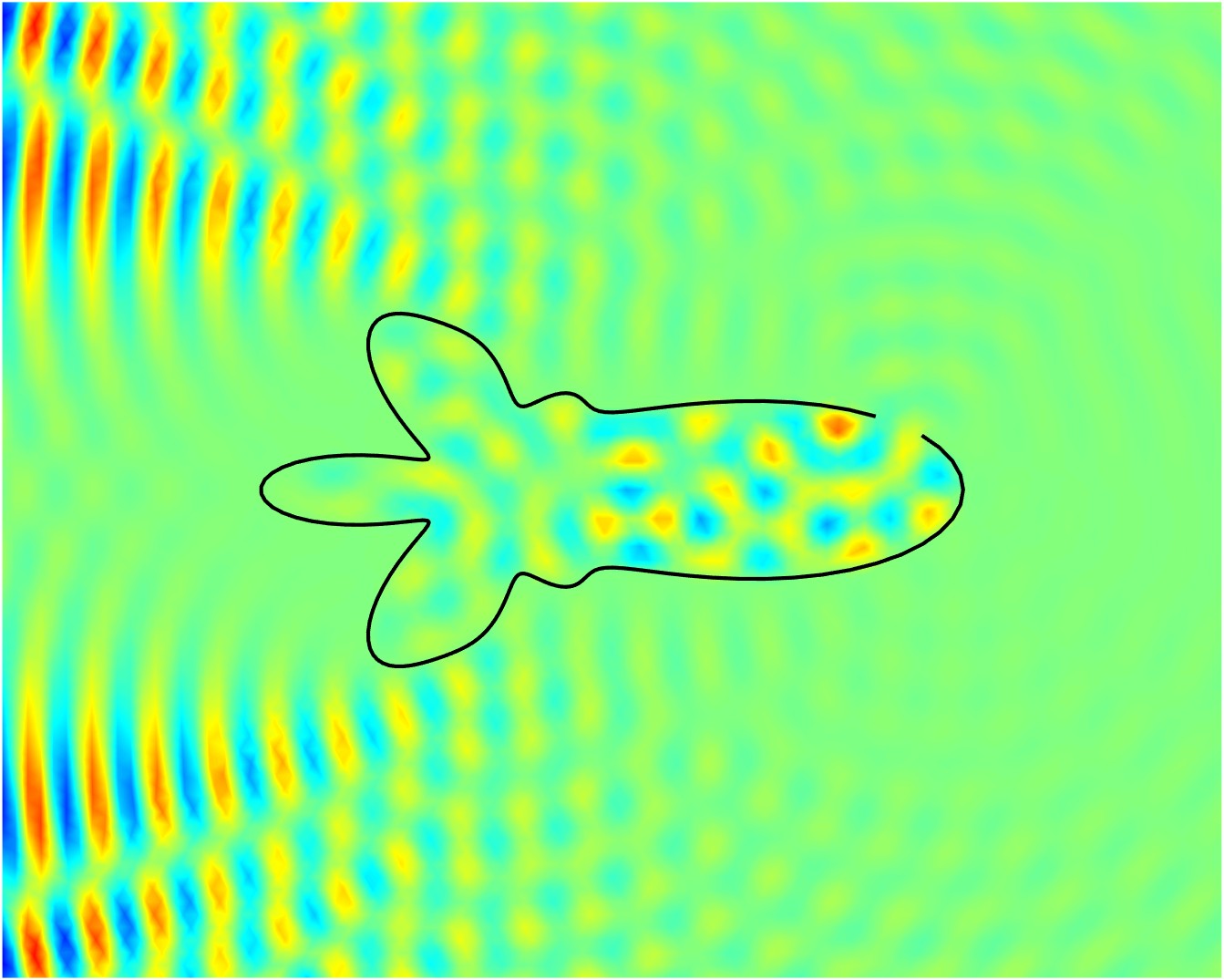} &
\includegraphics[scale=0.07]{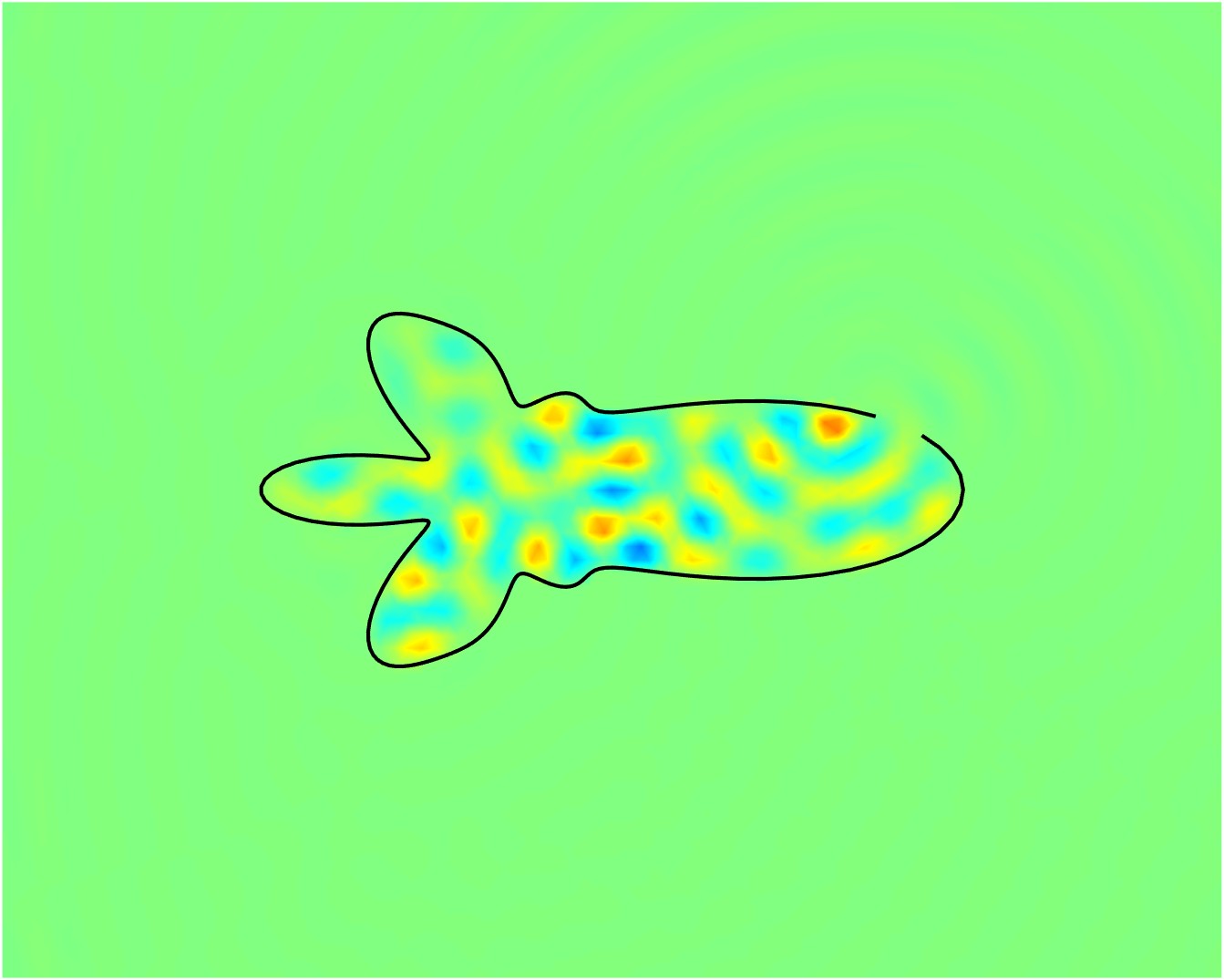}
  \end{tabular}
  \caption{Real part of the total fields for the scattering of plane
wave $u^i_2$ with $\theta^\textrm{inc}=-\pi$ by rocket-like open
cavity. Fields at times $t = 4, 6, 8$ and 10 are displayed from left
to right.}
  \label{Example5.4}
  \end{figure}

\section{Concluding remarks}
\label{sec:6}

In significant extension of a previous two-patch interior multiple-scattering wave equation solver~\cite{BY23}, and on the basis of a
newly introduced spatially-windowed multi-patch decomposition
strategy, this paper proposed new multi-patch multiple-scattering FTH-MS
methodologies for the solution of both interior and exterior wave
equation problems. In particular, on the basis of the Huygens-like
domain-of-influence condition, this paper established rigorously that
the sum of the overlapping-arc multiple scattering solutions coincides
with the solution to the original wave equation problem. Two key innovations distinguish the present contribution from the previous multi-scattering method~\cite{BY23}, namely
(i)~The use of a POU along the boundary now enables the treatment of an arbitrary number of overlapping arcs, whereas the earlier algorithm was limited to a maximum of two; and
(ii)~The new change-of-variables-based handling of density singularities arising from the overlapping-arc method replaces the ad-hoc technique used in~\cite{BY23}---which, relying on extension of the  scatterer boundaries along the normal direction, is rather cumbersome and  fails in the presence of open arcs on the scattering surface. In
all cases the solver transforms the original problem into a sequence
of scattering problems involving the given overlapping open arcs and
closed curves, each one of which is solved on the basis of the
``smooth time-windowing and recentering'' methodology and
high-frequency Fourier transform algorithms~\cite{ABL20}. The solves
required for the sequence of smooth temporal windows can clearly be
performed in parallel, allowing for implementations that leverage
parallelization of the time evolution. As demonstrated through various
numerical examples, the proposed methodologies enable long-time
simulations, including incident signals of extended duration, while
maintaining high accuracy and negligible dispersion errors. The
extension of these ideas to three-dimensional problems, as well as
their application to elastic and electromagnetic wave phenomena and
multi-layered media, is left for future work.

\section*{Acknowledgments} This work is partially supported by the
National Key R\&D Program of China, Grants No. 2024YFA1016000 and 2023YFA1009100,  the Strategic Priority Research Program of the Chinese Academy of Sciences, Grant No. XDB0640000, the Key Project of Joint Funds for Regional Innovation and Development (U21A20425) Province, NSFC under grants 12171465 and a Key Laboratory of Zhejiang Province. OB gratefully acknowledges support for NSF and AFOSR under contracts DMS-2109831, FA9550-21-1-0373 and FA9550-25-1-0015.


\begin{thebibliography}{00}
\bibitem{ADG11} A. Aimi, M. Diligenti, C. Guardasoni, On the energetic
Galerkin boundary element method applied to interior wave propagation
problems, J. Comput. Appl. Math. 235(7) (2011) 1746-1754.
\bibitem{akhm_br} E. Akhmetgaliyev and O. Bruno, Regularized integral
formulation of mixed Dirichlet-Neumann problems, Journal of Integral
Equations and applications 29 (2017) 493-529.
\bibitem{AGH02} B. Alpert, L. Greengard, T. Hagstrom, Nonreflecting
boundary conditions for the time-dependent wave equation,
J. Comput. Phys. 180 (2002) 270-296.
\bibitem{AB16} F. Amlani, O. P. Bruno, An FC-based spectral solver for
elastodynamic problems in general three-dimensional domains,
J. Comput. Phys. 307 (2016) 333-354.
\bibitem{AB22} T.G. Anderson and O. P. Bruno, ``Bootstrap Domain of
Dependence'': Bounds and Time Decay of Solutions of the Wave Equation,
arXiv preprint arXiv:2010.09002 (2022) .
\bibitem{ABL20} T.G. Anderson, O. P. Bruno, M. Lyon, High-order,
dispersionless ``fast-hybrid'' wave equation solver. Part I: O(1)
sampling cost via incident-field windowing and recentering, SIAM
J. Sci. Comput. 422 (2020) A1348-A1379.
\bibitem{ABL25} T.G. Anderson, O. P. Bruno, M. Lyon, Fast Hybrid wave
equation solver Part II: 3D window tracking, 2D asymptotics tracking,
general incident fields and remote field evaluation, In preparation.
\bibitem{BS97} I.M. Babu\v{s}ka, S.A. Sauter, Is the pollution effect
  of the FEM avoidable for the Helmholtz equation considering high
  wave numbers? SIAM J. Numer. Anal. 34 (1997) 2392-2423.
\bibitem{BH86} A. Bamberger, T. Ha-Duong, Formulation variationelle espace-temps pour le calcul par potentiel retard\'e de la diffraction d'une onde acoustique, Math. Methods Appl. Sci. 8 (1986) 405-435.
\bibitem{BK14} L. Banjai, M. Kachanovska, Fast convolution quadrature for the wave equation in three dimensions, J. Comput. Phy. 279 (2014) 103-126.
\bibitem{BMPS21} P. Bansal, A. Moiola, I. Perugia, C. Schwab, Space-time discontinuous Galerkin approximation of acoustic waves with point singularities, IMA J. Numer. Anal. 41 (2021) 2056-2109.
\bibitem{BGH20} A.H. Barnett, L. Greengard, T. Hagstrom, High-order discretization of a stable time-domain integral equation for 3D acoustic scattering, J. Comput. Phy. 402 (2020) 109047.
\bibitem{BB21} C. Bauinger, O.P. Bruno, ``Interpolated Factored Green Function'' method for accelerated solution of scattering problems, J. Comput. Phy. 430 (2021) 110095.
\bibitem{B94} J.-P. Berenger, A perfectly matched layer for the absorption of electromagnetic waves, J. Comput. Phy. 114 (1994) 185-200.
\bibitem{BLPT16} O.P. Bruno, M. Lyon, C. P\'erez-Arancibia, C. Turc, Windowed Green function method for layered-media scattering, SIAM J. Appl. Math. 76(5) (2016) 1871-1898.
\bibitem{BG18} O.P. Bruno, E. Garza, A Chebyshev-based rectangular-polar integral solver for scattering by general geometries described by non-overlapping patches, J. Comput. Phys. 421 (2020) 109740.
\bibitem{BGP17} O.P. Bruno, E. Garza, C. P\'erez-Arancibia, Windowed Green function method for nonuniform open-waveguide problems, IEEE Trans. Antenn. Propag. 65 (2017) 4684-4692.
\bibitem{BL12} O.P. Bruno, S. Lintner, Second-kind integral solvers for TE and TM problems of diffraction by open arcs, Radio Sci. 47 (6) (2012).
\bibitem{BL10} O.P. Bruno, M. Lyon, High-order unconditionally stable FC-AD solvers for general smooth domains I. Basic elements, J. Comput. Phys. 229 (6) (2010) 2009-2033.
\bibitem{BY20} O.P. Bruno, T. Yin, Regularized integral equation methods for the elastic scattering problems in three dimensions, J. Comput. Phys. 410 (2020) 109350.
\bibitem{BY21} O.P. Bruno, T. Yin, A windowed Green function method for elastic scattering problems on a half-space, Comput. Methods Appl. Mech. Engrg. 376 (2021) 113651.
\bibitem{BY23} O.P. Bruno, T. Yin, Multiple-scattering frequency-time
  hybrid solver for the wave equation in interior domains,
  Math. Comput.  93(346) (2024) 551-587.
\bibitem{CHLM10} Q. Chen, H. Haddar, A. Lechleiter, P. Monk, A sampling method for inverse scattering in the time domain, Inverse Problems 8 (2010) 85001-85017.
\bibitem{CK98} D. Colton and R. Kress, Inverse Acoustic and Electromagnetic Scattering Theory, Berlin, Springer, 1998.
\bibitem{CDD03} M. Costabel, M. Dauge, R. Duduchava, Asymptotics without logarithmic terms for crack problems, Commun. Partial Differ. Equ. 28 (2003) 869-926.
\bibitem{DSSB93} J. Douglas, J.E. Santos, D. Sheen, L.S. Bennethum, Frequency domain treatment of one-dimensional scalar waves, Math. Models Methods Appl. Sci. 3 (1993) 171-194.
\bibitem{D03} T. Ha-Duong, On retarded potential boundary integral equations and their discretisation, in Topics in Computational Wave Propagation: Direct and Inverse Problems, M. Ainsworth et al., eds., Springer Berlin Heidelberg, Berlin, Heidelberg, 2003, pp. 301--336.
\bibitem{EM77} B. Engquist, A. Majda, Absorbing boundary conditions for the numerical simulation of waves, Math. Comp. 31 (1977) 629--629.
\bibitem{E10} L.C. Evans, Partial Differential Equations (2nd edition), Graduate Studies in Mathematics, Vol. 19, American Mathematical Society, Providence, 2010.
\bibitem{FP96} D.A. French, T.E. Peterson, A continuous space-time finite element method for the wave equation, Math. Comp. 65 (1996) 491-506.
\bibitem{GSS06} M.J. Grote, A. Schneebeli, D. Sch\"otzau, Dicontinuous Galerkin finite element method for the wave equation, SIAM J. Numer. Anal. 44 (2006) 2408-2431.
\bibitem{HQSS17} M. Hassell, T. Qiu, T. S\'anchez-Vizuet, F.-J. Sayas, A new and improved analysis of the time domain boundary integral operators for acoustics, J. Integral Equations Applications 29(1) (2017) 107-136.
\bibitem{LH21} I. Labarca, R. Hiptmair, Acoustic scattering problems with convolution quadrature and the method of fundamental solutions, Commun. Comput. Phys. 30 (2021) 985-1008.
\bibitem{LS09} A.R. Laliena, F.-J. Sayas, Theoretical aspects of the application of convolution quadrature to scattering of acoustic waves, Numer. Math. 112 (2009) 637-678.
\bibitem{L09} Y. Liu, Fast Multipole Boundary Element Method, Cambridge University Press, New York, 2009.
\bibitem{LSZ21} R. L\"oscher, O. Steinbach, M. Zank, Numerical results for an unconditionally stable space-time finite element method for the wave equation, Domain Decomposition Methods in Science and Engineering XXVI, 625-632.
\bibitem{L94} C. Lubich, On the multistep time discretization of linear initial-boundary value problems and their boundary integral equations, Numer. Math. 67 (1994) 365-389.
\bibitem{M65} R. C. MacCamy, Low frequency acoustic oscillations, Q. Appl. Math. 23 (1965) 247-255.
\bibitem{MMRZ00} E. Mecocci, L. Misici, M. C. Recchioni, F. Zirilli, A new formalism for time-dependent wave scattering from a bounded obstacle, J. Acoust. Soc. Am. 107 (2000) 1825-1840.
\bibitem{melenk2011wavenumber} J.~M. Melenk and S.~Sauter, Wavenumber
  explicit convergence analysis for galerkin discretizations of the
  helmholtz equation, SIAM J. Numer. Anal. 49 (2011)
  1210--1243.
\bibitem{S16} F.-J. Sayas, Retarded Potentials andc Time Domain Boundary Integral Equations, Springer International Publishing, 2016.
\bibitem{ST06} J. Shen and T. Tang, Spectral and High-Order Methods with Applications, Beijing Science Press, 2006.
\bibitem{SW84} E.P. Stephan, W.L. Wendland, An augmented Galerkin procedure for the boundary integral method applied to two-dimensional screen and crack problems, Appl. Anal. 3 (1984) 183-219.
\bibitem{SU22} O. Steinbach, C. Urz\'ua-Torres, A new approach to space-time boundary integral equations for the wave equation, SIAM J. Math. Anal. 54(2) (2022) 1370-1392.
\bibitem{SUZ21} O. Steinbach, C. Urz\'ua-Torres, M. Zank, Towards coercive boundary element methods for the wave equation, arXiv:2106.01646.
\bibitem{S41} J.A. Stratton, Electromagnetic theory, McGraw-Hill, New York, 1941.
\bibitem{SX21} Z. Sun, Y. Xing, Optimal error estimates of discontinuous Galerkin methods with generalized fluxes for wave equations on unstructured meshes, Math. Comput. 90 (2021) 1741-1772.
\bibitem{T00} A. Taflove, Computational electrodynamics: the finite-difference time-domain method, Artech House, Boston, 2000.
\bibitem{W86} P. Werner, Low frequency asymptotics for the reduced wave equation in two-dimensional exterior spaces, Math. Meth. Appl. Sci. 8 (1986): 134-156.
\bibitem{XCS13} Y. Xing, C.-S. Chou, C.-W. Shu, Energy conserving local discontinuous Galerkin methods for wave propagation problems, Inverse Probl. Imag. 7 (2013) 967-986.
\bibitem{YMC22} Y. Yue, F. Ma, B. Chen, Time domain linear sampling method for inverse scattering problems with cracks, E. Asian J. Appl. Math. 12(1) (2022) 96-110.
\end{thebibliography}
\end{document}